\theoremstyle{plain}
\newtheorem{thm}{Theorem}
\newtheorem{lemma}{Lemma}
\newtheorem{prop}{Proposition}
\theoremstyle{definition}
\newtheorem*{problem*}{Problem}
\newtheorem{remark}{Remark}
\newtheorem{example}{Example}
\newtheorem*{notation*}{Notation}
\newtheorem*{observ*}{Observation}
\newcommand{\Z}{\mathbb{Z}}
\newcommand{\be}{\begin{enumerate}}
\newcommand{\ee}{\end{enumerate}}
\DeclareMathOperator{\Aut}{Aut}
\DeclareMathOperator{\im}{im}
\newcommand{\semidirect}{\rtimes} %for semidirect product
\renewcommand{\c}[1]{\langle #1 \rangle} %for cyclic subgroups
\title[Cyclic Subgroups of Finite Groups]{Finite groups with a prescribed number of cyclic subgroups}
\author{Richard Belshoff}
\address{Department of Mathematics, Missouri State University, 
	Springfield, MO 65897}
\email{RBelshoff@MissouriState.edu}
\author{Joe Dillstrom}
\address{Department of Mathematics, Missouri State University, 
	Springfield, MO 65897}
\author{Les Reid}
\address{Department of Mathematics, Missouri State University, 
	Springfield, MO 65897}
\email{LesReid@MissouriState.edu}
\date{\today}
\begin{document}

\begin{abstract}
Marius T\u{a}rn\u{a}uceanu described the finite groups $G$ having $|G|-1$
cyclic subgroups.
We describe the finite groups $G$ having $|G|-\Delta$ cyclic subgroups
for $\Delta=2, 3, 4$ and $5$.
\end{abstract}

\maketitle

%\tableofcontents

\section{Introduction}
Throughout this paper, a cyclic group of order $n$ is denoted $C_n$, and 
the dihedral group of order $2n$ is denoted $D_{2n}$.
When we write $H\semidirect K$ we include the possibility that
the semidirect product is a direct product.
In \cite{tuarn},
Marius T\u{a}rn\u{a}uceanu proved the following theorem.
\begin{thm}[T\u{a}rn\u{a}uceanu] A finite group $G$ has $|G|-1$ 
cyclic subgroups if and only if $G$ is isomorphic to 
one of the following groups:
	$C_3$, $C_4$, $S_3$, or $D_8$.
\end{thm}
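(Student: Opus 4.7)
The plan is to translate the hypothesis $c(G) = |G|-1$ into an arithmetic identity on element orders, and then classify the finite groups satisfying it through a short case analysis. For each $n \geq 1$, let $e_n$ denote the number of elements of order $n$ in $G$. Since each cyclic subgroup of order $n$ has exactly $\phi(n)$ generators, the total number of cyclic subgroups is $c(G) = \sum_{n \geq 1} e_n/\phi(n)$. Combining this with $|G| = \sum_{n} e_n$ rewrites $c(G) = |G|-1$ as
\[
\sum_{n \geq 3} e_n\Bigl(1 - \tfrac{1}{\phi(n)}\Bigr) = 1.
\]

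For $n \geq 3$ each coefficient $1 - 1/\phi(n)$ is at least $1/2$, with equality precisely when $n \in \{3,4,6\}$; in these cases $e_n$ must be a positive multiple of $\phi(n)=2$. So exactly one of $e_3$, $e_4$, $e_6$ equals $2$, the rest of the $e_n$ for $n\geq 3$ vanish, and $e_2$ is unconstrained. An element of order $6$ automatically generates elements of order $3$, so the case $e_6 = 2$ is ruled out immediately, leaving two cases to analyze.

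In the case $e_3 = 2$, every non-identity element has order $2$ or $3$; the unique cyclic subgroup of order $3$ is normal, and $G = C_3 \rtimes Q$ with $Q$ an elementary abelian $2$-group. The plan here is to observe that any element $q \in Q$ which centralizes a generator $a$ of $C_3$ yields $aq$ of order $6$, so $Q$ acts faithfully on $C_3$ via $\mathrm{Aut}(C_3) \cong C_2$; this forces $|Q| \leq 2$, giving $G \cong C_3$ or $G \cong S_3$. In the case $e_4 = 2$, $G$ is a $2$-group and the unique cyclic subgroup $C = \langle a\rangle$ of order $4$ is normal. The plan is to show that any $g \in G\setminus C$ must invert $a$ (otherwise $ag$ lies outside $C$ and has order $4$), and that the product of any two such elements then centralizes $a$ and so lies in $C$; hence $G\setminus C$ is a single coset, $|G|\leq 8$, and $G \cong C_4$ or $G \cong D_8$.

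The hardest part is not the arithmetic reduction but the structural step in the two surviving cases: one must confirm that the constraints on element orders really rule out all semidirect products of order $12$, $16$, and beyond. The centralizer/inversion argument sketched above is the key device for closing off these larger candidates.
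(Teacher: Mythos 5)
Your proposal is correct and follows essentially the same route as the paper: your identity $\sum_{n\ge 3} e_n(1-\tfrac{1}{\phi(n)})=1$ is the element-count form of the paper's equation $(\star)$, the exclusion of the order-$6$ case is the paper's ``No $C_n$'' argument, and your two structural arguments (the inversion/coset argument for $e_4=2$ and the $C_3\rtimes Q$ faithfulness argument for $e_3=2$) are specializations of the paper's Propositions \ref{from_H} and \ref{a_2a}. No gaps.
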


Let $G$ be a finite group, and let $C(G)$ denote the poset of 
cyclic subgroups of $G$.
We define  $\Delta(G)$, or just $\Delta$ if the group is understood, to be 
the difference between the order of $G$ and the number of cyclic subgroups of $G$:
$$\Delta(G)=|G|-|C(G)|.$$  

It is known that $\Delta=0$ if and only if 
$G$ is an elementary abelian $2$-group, i.e.,
$G=C_2^n$ for some $n$.

In \cite{tuarn} where T\u{a}rn\u{a}uceanu described all groups
satisfying $\Delta=1$, he also 
posed the following natural open problem.

\begin{problem*} Describe the finite groups $G$ having $|G|-\Delta$
cyclic subgroups, where $2\le \Delta\le |G|-1$.
\end{problem*}

In this paper we solve this problem for $\Delta=2, 3, 4$ and $5$. 
More specifically we prove the following theorems. 

\begin{thm} A finite group $G$ has exactly $|G|-2$ cyclic subgroups 
	if and only if $G$ is isomorphic to one of the following groups:
	$C_4\times C_2$, $D_8\times C_2$, $C_6$, or $D_{12}$.
\end{thm}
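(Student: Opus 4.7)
The plan is to exploit the counting identity
\[
\Delta(G) \;=\; \sum_{d \geq 3} (\phi(d) - 1)\, c_d(G),
\]
which comes from $|G| = \sum_{d} \phi(d) c_d(G)$ together with $|C(G)| = \sum_d c_d(G)$. Every summand is non-negative and $\phi(d) - 1 \geq 1$ for $d \geq 3$, so $\Delta(G) = 2$ leaves exactly four profiles for the nonzero $c_d$'s with $d \geq 3$ (using that $c_6 \geq 1$ forces $c_3 \geq 1$):
\begin{enumerate}
\item[(A)] $c_3 = 2$;
\item[(B)] $c_3 = c_4 = 1$;
\item[(C)] $c_3 = c_6 = 1$;
\item[(D)] $c_4 = 2$.
\end{enumerate}

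Cases (A) and (B) can be dismissed quickly via Sylow's theorem and $\Aut(C_3) = C_2$. In (A) the Sylow $3$-subgroup $P_3$ has exponent $3$; if $|P_3| = 3$ then $c_3$ equals the number $n_3$ of Sylow $3$-subgroups and $n_3 \equiv 1 \pmod 3$, while if $|P_3| \geq 9$ then $P_3$ alone already contains at least $4$ subgroups of order $3$. In (B) the unique $C_3$ is normal, and an element $x$ of order $4$ acts through $C_4 \to \Aut(C_3) = C_2$; either $x$ or $x^2$ lies in the kernel and centralizes a generator of $C_3$, producing an element of order $12$ or $6$, contradicting the profile.

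For (C), the same analysis forces $|G| = 2^a \cdot 3$ with $C_3$ normal and Sylow $2$-subgroup $P$ elementary abelian, so $G = C_3 \rtimes P$. Letting $K \leq P$ be the kernel of the action on $C_3$, expansion of $(qp)^2$ for $q \in C_3$, $p \in P$ shows that the order-$6$ elements are exactly those with $q \neq 1$ and $p \in K \setminus \{1\}$. This yields $c_6 = 2^{a-1} - 1$ when $[P : K] = 2$ and $c_6 = 2^a - 1$ when $K = P$; imposing $c_6 = 1$ gives $G \cong D_{12}$ or $G \cong C_6$, respectively.

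The main obstacle is (D), where $G$ is a $2$-group of exponent $4$ with exactly two cyclic subgroups $A = \langle a \rangle$ and $B = \langle b \rangle$ of order $4$. I would split on whether $a^2 = b^2$. If $a^2 \neq b^2$, then $A \cap B = \{1\}$; because $A, B$ are the only $C_4$'s in $G$, conjugation by $a$ or $b$ must fix each setwise, so $A, B \triangleleft H := \langle a, b \rangle$. If $a, b$ commute then $H \cong C_4 \times C_4$, which has $c_4 = 6$; otherwise $bab^{-1} = a^{-1}$ and a direct computation gives $(ab)^2 = b^2 \neq 1$, so $\langle ab \rangle$ is a third cyclic subgroup of order $4$ --- both outcomes contradict $c_4(G) = 2$. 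Hence $a^2 = b^2 =: z$, $\langle z \rangle$ is central, every $g \in G$ squares into $\langle z \rangle$, and $V := G/\langle z \rangle$ is elementary abelian. The squaring map descends to a quadratic form $q : V \to \mathbb{F}_2$ whose associated alternating bilinear form is $\beta(\bar g, \bar h) = [g, h] \bmod z$, and the hypothesis $c_4(G) = 2$ translates to $|\{v \in V \setminus \{0\} : q(v) = 1\}| = 2$. A short analysis via the dimension of $\mathrm{rad}(\beta)$ and the $O^\pm$-type of $q$ on $V/\mathrm{rad}(\beta)$ leaves only two cases: $\dim V = 2$ with $\beta \equiv 0$, yielding $G \cong C_4 \times C_2$; and $\dim V = 3$ with $\dim \mathrm{rad}(\beta) = 1$ and $q$ of plus type on the quotient, yielding $G \cong D_8 \times C_2$. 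Combined with cases (A)--(C), this produces the stated list.
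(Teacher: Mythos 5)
Your argument is correct, and it follows the paper's skeleton at the top level (the identity $\Delta(G)=\sum_{d\ge 3}(\phi(d)-1)c_d$ and the same four surviving profiles; your remark that $c_6\ge 1$ forces $c_3\ge 1$ is exactly the paper's ``No $C_n$'' exclusion of $(6,6)$ and $(4,6)$, and your cases (A), (B) are dispatched as in the paper, via the Frobenius--Sylow count and the normal unique $C_3$ acted on by an order-$4$ element). The two substantive cases, however, are handled by genuinely different means. For $\sigma(G)=(3,6)$ the paper notes that the distinguished cyclic subgroups generate a $C_6$ and invokes Proposition~\ref{from_H} (either $G$ equals that subgroup or is its extension by an inverting involution), giving $C_6$ or $D_{12}$ with almost no computation; you instead write $G=C_3\rtimes P$ with $P$ elementary abelian and count the order-$6$ elements as $2(|K|-1)$, a perfectly good direct alternative that also explains \emph{why} no larger $2$-part can occur. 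For $\sigma(G)=(4,4)$ the paper again uses Lemma~\ref{lemma:Z} to get $x^2=y^2$, identifies $\langle x,y\rangle\cong C_4\times C_2$ by elimination, and finishes with Proposition~\ref{from_H}; you pass to the quadratic form $q$ on $V=G/\langle z\rangle$ and sort cases by $\dim\mathrm{rad}(\beta)$ and Witt type. That machinery is heavier than needed here but buys uniformity: the count of order-$4$ elements is $2\cdot\#\{v\ne 0:q(v)=1\}$, and the closed form for $\#\{q=1\}$ (namely $2^{n-1}$ if $q$ is nonzero on the radical, and $2^{r}(2^{2m-1}\pm 2^{m-1})$ otherwise, where $r=\dim\mathrm{rad}(\beta)$ and $2m=n-r$) settles every prescribed value of $c_4$ for exponent-$4$ groups with $|G^2|=2$ at once, not just $c_4=2$. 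Two points that you assert rather than prove should be written out before this is complete: the count just quoted (your ``short analysis''), and the fact that $(V,q)$ --- equivalently the squaring and commutator maps --- determines $G$ up to isomorphism; in the $n=3$ case the cleanest route is to exhibit the lift of a radical vector with $q=0$ as a central involution splitting off a direct factor from the $D_8$ sitting over the hyperbolic plane. Both gaps are routine, so the proof stands.
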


\begin{thm} A finite group $G$ has exactly $|G|-3$ cyclic subgroups 
	if and only if $G$ is isomorphic to one of the following groups:
	$Q_8$, $C_5$, or $D_{10}$.
\end{thm}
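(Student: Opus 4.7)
The plan rests on the identity $|G|=\sum_{H\in C(G)}\phi(|H|)$, counting elements by the unique cyclic subgroup they generate. This rearranges to
$$\Delta(G)=\sum_{H\in C(G),\,|H|\ge 3}(\phi(|H|)-1),$$
a sum of positive odd integers (since $\phi(n)$ is even for $n\ge 3$). So $\Delta=3$ forces $3=3$ (a single cyclic subgroup $H$ with $\phi(|H|)=4$, hence $|H|\in\{5,8,10,12\}$) or $3=1+1+1$ (three cyclic subgroups of orders in $\{3,4,6\}$).

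In the first case, $|H|\in\{8,10,12\}$ is impossible because $C_n$ then contains a proper cyclic subgroup of order $\ge 3$ (namely $C_4$, $C_5$, $C_3$), violating uniqueness. So $|H|=5$; the unique $C_5$ is normal and all other elements are involutions. Any involution centralizing $C_5$ would give an element of order $10$, hence a forbidden $C_{10}$, so $C_G(C_5)=C_5$ and $G/C_5\hookrightarrow\Aut(C_5)\cong C_4$. Since this image is generated by involutions, it lies in the unique $C_2\le C_4$; thus $|G|\in\{5,10\}$, giving $G\cong C_5$ or $G\cong D_{10}$.

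In the second case I first rule out every configuration involving a $C_6$. A $C_6$ already contributes its internal $C_3$ to the count, so the third subgroup must be a $C_3$, $C_4$, or another $C_6$. Two $C_6$'s are forced to share their $C_3$, and then the two distinguished centralizing involutions either commute (giving a third $C_6$) or generate a dihedral subgroup whose non-involution element times the $C_3$ generator yields a cyclic subgroup of forbidden order. A $C_6$ paired with a $C_4$ fails because the order-$4$ element $s$ must invert the $C_3$ generator $y$ (else $sy$ generates $C_{12}$), and then $(sy)^2=s^2$ makes $sy$ generate a second $C_4$. The remaining \emph{$C_6$ plus external $C_3$} case is killed by Sylow since $n_3\equiv 1\pmod 3$. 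Configurations with two or three $C_3$'s and no $C_6$ fall to the same Sylow bound together with the facts that $C_9$ is a forbidden cyclic subgroup and $C_3\times C_3$ contains four $C_3$'s. Finally, \emph{one $C_3$ with two $C_4$'s} fails because an order-$4$ element either commutes with the $C_3$ generator (producing $C_{12}$) or inverts it (producing $C_6$, already ruled out). So all three subgroups are $C_4$'s and $G$ is a $2$-group of exponent $4$.

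The main obstacle is showing such a $G$ is $Q_8$. The plan is an intersection analysis. If generators $a,b$ of two of the $C_4$'s satisfied $a^2\ne b^2$, then $\langle a\rangle\cap\langle b\rangle=1$ and $\langle a,b\rangle$ contains $C_4\times C_4$ (if $a,b$ commute) or $C_4\rtimes C_4$ (if not) as a subgroup of order $16$, each harbouring at least six $C_4$'s, a contradiction. Hence all three $C_4$'s share a common involution $z$, and $H=\langle A,B,C\rangle$ has $H/\langle z\rangle\cong C_2\times C_2$, making $H$ a non-abelian group of order $8$ with three $C_4$'s, which forces $H\cong Q_8$. Finally, any $g\in G\setminus Q_8$ must be an involution (otherwise $\langle g\rangle$ is a fourth $C_4$), and the requirement that $gh$ have order $\le 2$ for every order-$4$ element $h\in Q_8$ forces conjugation by $g$ to send each of $i,j,k$ to its inverse; but then $gkg^{-1}=g(ij)g^{-1}=(-i)(-j)=k$ contradicts $gkg^{-1}=-k$. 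Hence $G=Q_8$, completing the classification.
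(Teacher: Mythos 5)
Your overall architecture matches the paper's: the same partition analysis reduces to a single subgroup of order in $\{5,8,10,12\}$ or three subgroups of orders in $\{3,4,6\}$, your exclusions of the latter configurations are essentially the paper's (Sylow/Frobenius counting, ``a cyclic subgroup of order $m$ contains one of each order dividing $m$,'' and the commuting/inverting dichotomy for a normal $C_3$), and your treatment of $\sigma(G)=(5)$ is sound. The problem is the key step $\sigma(G)=(4,4,4)\Rightarrow G\cong Q_8$, where you have a genuine gap. You assert that if $a,b$ have order $4$ and $\langle a\rangle\cap\langle b\rangle=1$, then $\langle a,b\rangle$ contains $C_4\times C_4$ or $C_4\rtimes C_4$ as a subgroup of order $16$ and hence at least six $C_4$'s. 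The non-commuting branch is false without knowing that one of $a,b$ normalizes the other's subgroup: ``$a,b$ do not commute'' does not give the relation $bab^{-1}=a^{-1}$ needed to identify $\langle a,b\rangle$ with $C_4\rtimes C_4$. Concretely, $S_4$ has exactly three cyclic subgroups of order $4$, pairwise intersecting trivially, and two $4$-cycles with distinct squares generate all of $S_4$, which has no subgroup of order $16$ at all. (The paper's Lemma~\ref{lemma:Z} is exactly your claim, but it carries the hypothesis $tst^{-1}\in\langle s\rangle$ for precisely this reason, and the paper does \emph{not} use it for $(4,4,4)$.) A secondary, smaller gap: even granting a common square $z$, you assert $H/\langle z\rangle\cong C_2\times C_2$ and that $H$ is non-abelian, but a priori $H/\langle z\rangle$ could be $C_2^3$ and $H$ abelian of order $16$; these cases must be ruled out by counting $C_4$'s.

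Both gaps are repairable in your framework: since $G$ is a $2$-group, the conjugation action on the three subgroups $A,B,C$ has image a $2$-subgroup of $S_3$, hence of order at most $2$; combined with the fact that each generator fixes its own subgroup, this shows every element of order $4$ normalizes all three subgroups except possibly that generators of one distinguished subgroup swap the other two. That supplies the normalization hypothesis, whence the common square, whence $[a,b],[a,c],[b,c]\in\{1,z\}$; then the abelian possibilities ($C_4\times C_2$, $C_4\times C_2\times C_2$, $C_4\times C_4$, \dots) all have the wrong number of $C_4$'s, forcing some pair to satisfy $bab^{-1}=a^{-1}$ with $a^2=b^2$, i.e.\ $\langle a,b\rangle\cong Q_8$, which already accounts for all three subgroups. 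By contrast, the paper routes this case through Frobenius's theorem on solutions of $x^4=1$ to get $i_2(G)\equiv 1\bmod 4$ and then invokes the classification of such $2$-groups as cyclic, dihedral, generalized quaternion, or quasidihedral; your intended argument, once patched, is more elementary and self-contained, but as written the decisive step is unjustified.
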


\begin{thm} A finite group $G$ has exactly $|G|-4$ cyclic subgroups 
	if and only if $G$ is isomorphic to one of the following eleven groups:
	$C_4\times C_2\times C_2$,
	$C_2\times C_2\times D_8$,
	$(C_2\times C_2)\semidirect C_4$,
	$Q_8\semidirect C_2$,
	$C_3\times C_3$,
	$(C_3\times C_3)\semidirect C_2$,
	$A_4$,
	$C_6\times C_2$,
	$C_2\times C_2\times S_3$,
	$C_8$, or
	$D_{16}$.
\end{thm}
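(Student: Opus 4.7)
The plan is to translate $\Delta(G) = 4$ into a combinatorial constraint on the cyclic subgroups of $G$, sharply restrict this constraint using number-theoretic and Sylow-theoretic tools, and then identify $G$ in each surviving case. The key identity
\[
\Delta(G) = \sum_{n \geq 3} \bigl(\phi(n) - 1\bigr)\, c_n(G),
\]
with $c_n(G)$ the number of cyclic subgroups of $G$ of order $n$, follows from $|G| = \sum_n \phi(n) c_n(G)$, $|C(G)| = \sum_n c_n(G)$, and $\phi(1) = \phi(2) = 1$. Since $\phi(n) - 1 = 1$ only for $n \in \{3, 4, 6\}$, equals $3$ for $n \in \{5, 8, 10, 12\}$, and is at least $5$ for every other $n \geq 7$, setting this sum equal to $4$ forces one of two patterns: (I) $G$ has exactly four cyclic subgroups of orders in $\{3, 4, 6\}$ and none of order $\geq 5$; or (II) $G$ has exactly one cyclic subgroup of order in $\{5, 8, 10, 12\}$ together with exactly one of order in $\{3, 4, 6\}$, and no others of order $\geq 3$.

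Case (II) is resolved by short containment arguments. A $C_{10}$ already contains a $C_5$ and a $C_{12}$ contains both a $C_4$ and a $C_6$, so either choice pushes $\Delta$ above $4$. If $G$ contains $C_5$ along with a $C_3$, $C_4$, or $C_6$, normality of the unique Sylow $5$-subgroup together with the structure of $\Aut(C_5) \cong C_4$ forces the appearance of one of $C_{15}$, $C_{10}$, $C_{20}$, or extra conjugate $C_4$-subgroups, all forbidden. Hence the large subgroup must be $C_8$, which automatically supplies its unique $C_4$. Analyzing finite groups whose only cyclic subgroups of order $\geq 3$ are a single $C_8$ and its $C_4$ subgroup yields $G \cong C_8$ and $G \cong D_{16}$.

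For Case (I), write the multiset as $(c_3, c_4, c_6)$ with $c_3 + c_4 + c_6 = 4$. Frobenius's theorem on subgroups of prime order gives $c_3 \equiv 1 \pmod 3$, so $c_3 \in \{0, 1, 4\}$; and $c_6 \geq 1$ forces $c_3 \geq 1$ since each $C_6$ contains a $C_3$. This leaves the triples $(4, 0, 0)$, $(1, 3, 0)$, $(1, 2, 1)$, $(1, 1, 2)$, $(1, 0, 3)$, and $(0, 4, 0)$. When $c_3 = 1$ the unique $C_3$ is normal, and the action of any $C_4 \subseteq G$ on it through $\Aut(C_3) \cong C_2$ forces additional cyclic subgroups of order $6$ in sufficient numbers to eliminate $(1, 3, 0)$, $(1, 2, 1)$, and $(1, 1, 2)$. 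The three surviving triples are treated as follows:
\begin{itemize}
\item $(4, 0, 0)$: $G$ has eight elements of order $3$ and none of order $\geq 4$; Sylow analysis at $3$ and $2$ gives $C_3 \times C_3$, $A_4$, and $(C_3 \times C_3) \semidirect C_2$.
\item $(1, 0, 3)$: $G$ has a normal $C_3$ whose centralizer contains three involutions; this constrains $|G| \in \{12, 24\}$ and yields $C_6 \times C_2$ and $C_2 \times C_2 \times S_3$.
\item $(0, 4, 0)$: $G$ is a $2$-group with eight elements of order $4$ and no element of order $\geq 8$; counting shows $|G| \in \{16, 32\}$, and enumeration of the groups of these orders with exactly four $C_4$-subgroups produces $C_4 \times C_2 \times C_2$, $(C_2 \times C_2) \semidirect C_4$, $Q_8 \semidirect C_2$, and $C_2 \times C_2 \times D_8$.
\end{itemize}

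The main obstacle is subcase $(0, 4, 0)$, where several non-isomorphic $2$-groups share the same multiset of cyclic subgroup orders and must be separated by examining their full subgroup lattices, and where one must also rule out $2$-groups of order $\geq 64$ (for which $c_4 = 4$ turns out to be incompatible with the required exponent-$4$ structure and the count $c_2 = |G| - 9$ of involutions). Verification that each of the eleven candidate groups satisfies $\Delta = 4$ is then a routine direct computation and supplies the converse.
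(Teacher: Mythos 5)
Your outline follows the paper's strategy---the identity $\sum_n(\phi(n)-1)c_n=4$, reduction to multisets of orders drawn from $\{3,4,6\}\cup\{5,8,10,12\}$, the Frobenius congruence $c_3\equiv 1\pmod 3$, and containment arguments---but two of your steps have real gaps. The first is the elimination of the triples $(c_3,c_4,c_6)=(1,2,1)$ and $(1,1,2)$ by ``additional cyclic subgroups of order $6$.'' If $\langle x\rangle\cong C_4$ and $\langle y\rangle\cong C_3$ is normal, the only order-$6$ subgroup this action manufactures is $\langle x^2y\rangle$; in the case $\sigma(G)=(3,4,6,6)$ two subgroups of order $6$ are already permitted, and in the case $\sigma(G)=(3,4,4,6)$ the two $C_4$'s may share their involution, so that both produce the same $\langle x_i^2y\rangle$. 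Counting $C_6$'s therefore yields no contradiction. What actually kills these cases is an element of order $12$: in $(3,4,6,6)$ the $C_3$ and the $C_4$ are each unique, hence normal, hence commute elementwise, so $xy$ has order $12$; in $(3,4,4,6)$ an element $z$ of order $3$ fixes each of the two $C_4$'s under the conjugation action on the pair, so $zxz^{-1}=x^{\pm1}$ forces $z^2xz^{-2}=x$, and since $z^2=z^{-1}$ this means $z$ centralizes $x$ and $xz$ has order $12$. Since $12$ does not occur in the multiset, these cases die---but by a different mechanism than the one you describe. (Your argument for $(1,3,0)$ is fine, since there zero $C_6$'s are allowed.)

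The second gap is that your order bounds are asserted rather than derived. In the $(0,4,0)$ case, ``counting shows $|G|\in\{16,32\}$'' is not a count: $|G|=9+c_2$ where $c_2$, the number of involutions, is a priori unbounded, and the parenthetical claim that orders at least $64$ are ``incompatible'' is precisely what needs proof. The same issue affects $(1,0,3)$, where ``this constrains $|G|\in\{12,24\}$'' is unsupported (involutions outside the centralizer of the $C_3$ are not controlled), and the identification of the $C_8$ case. The paper closes all of these uniformly with a structural lemma: if $H$ is generated by one generator from each prescribed cyclic subgroup, then every element of $G\setminus H$ has order $2$ and inverts $H$, so $G=H$ or $G=H\rtimes C_2$ with $H$ abelian; in particular $[G:H]\le 2$, which pins down $|G|$ once $|H|$ is known. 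You need this lemma, or some substitute for it, before ``enumeration of the groups of these orders'' can be claimed to be exhaustive.
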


\begin{thm} A finite group $G$ has exactly $|G|-5$ cyclic subgroups 
	if and only if $G$ is isomorphic to one of the following groups:
        $C_7$, $D_{14}$, or $C_3\semidirect C_4$.
\end{thm}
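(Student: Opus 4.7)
The plan is to reduce to a finite case analysis using the identity $\Delta(G)=\sum_{n\ge 3} a_n(\phi(n)-1)$, where $a_n$ denotes the number of cyclic subgroups of $G$ of order $n$. This identity makes $\Delta$ monotone on subgroups ($\Delta(H)\le\Delta(G)$), and applied to cyclic subgroups it gives $\Delta(C_n)=n-d(n)\le 5$, forcing all element orders of $G$ into $\{1,2,\dots,8\}$. So only $a_3,a_4,a_5,a_6,a_7,a_8$ can be nonzero in the sum. Frobenius's counting lemma $a_p\equiv 1\pmod p$ restricts $a_3\in\{0,1,4\}$ and $a_5,a_7\in\{0,1\}$ within our budget of $5$. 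Combined with the containments $a_6\ge 1\Rightarrow a_3\ge 1$ and $a_8\ge 1\Rightarrow a_4\ge 1$, the equation $a_3+a_4+3a_5+a_6+5a_7+3a_8=5$ reduces to a short list of profiles $(a_3,a_4,a_5,a_6,a_7,a_8)$.

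I would then dispose of each profile in turn. The profile $(0,0,0,0,1,0)$ yields $G\in\{C_7,D_{14}\}$ via the argument used for Theorem~3: the unique Sylow 7-subgroup is normal, and any element of $G$ outside it has order $2$ and must invert $C_7$ (else a $C_{14}$ appears). Profiles with both $a_3\ge 1$ and $a_5\ge 1$ are ruled out immediately, since the unique Sylow 3- and 5-subgroups are normal and commute, producing $C_{15}\le G$, but $\Delta(C_{15})=11$. The profile $(0,2,1,0,0,0)$ forces $C_5\lhd G$ and the Sylow 2-complement to embed in $\Aut(C_5)=C_4$ (else a centralizing order-2 element produces $C_{10}$), leaving only $G\in\{C_5,D_{10},C_5\semidirect C_4\}$, none of which realizes the profile. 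Profiles with $a_8=1$ fail by a Schur--Zassenhaus/centralizer analysis: when $a_3\ge 1$, the Sylow 2-action on the normal $C_3$ must have kernel of exponent $\le 2$ to avoid a $C_{12}$, incompatible with a $C_8$ inside the Sylow 2-subgroup; the remaining 2-group profile $(0,2,0,0,0,1)$ has no realization among 2-groups. The profile $(0,5,0,0,0,0)$ demands a 2-group of exponent $4$ with exactly five $C_4$-subgroups, ruled out by enumerating 2-groups of small order.

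The remaining profiles are the ``small-order'' profiles with $a_3\in\{1,4\}$ and only $a_3,a_4,a_6$ nonzero. For these, the key structural fact is that $a_6=0$ means no order-2 element of $G$ centralizes any order-3 element. Combined with Theorems~1--4 (which classify the possible Sylow 2-subgroup, as $\Delta(\mathrm{Syl}_2(G))\le 5$), and with the Sylow 3-structure dictated by $a_3$ (either $C_3$ with $n_3\in\{1,4\}$, or $C_3\times C_3$), one shows every remaining profile is unrealizable except $(1,3,0,1,0,0)$, which is realized by the dicyclic group $C_3\semidirect C_4$. A direct computation then confirms $\Delta=5$ for $C_7$, $D_{14}$, and $C_3\semidirect C_4$. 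The main obstacle is this final step: the profiles $(4,1,0,0,0,0)$, $(4,0,0,1,0,0)$, $(1,4,0,0,0,0)$, $(1,2,0,2,0,0)$, $(1,1,0,3,0,0)$, and $(1,0,0,4,0,0)$ each require a delicate Sylow-and-centralizer argument built around the no-commutation condition forced by $a_6=0$.
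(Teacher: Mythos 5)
Your overall strategy is the same as the paper's: use the identity $\sum_d n_d(\phi(d)-1)=\Delta(G)$ together with Frobenius/Sylow congruences and divisor constraints to reduce to a finite list of profiles $\sigma(G)$, then eliminate them one by one. Your derivation of the element-order bound via monotonicity of $\Delta$ on subgroups ($\Delta(C_9)=6>5$, etc.) is a clean variant of the paper's table construction, and your profile list is complete. However, the proposal has genuine gaps precisely where the real work lies. First, the eliminations of $(4,4,4,4,4)$ and $(4,4,8)$ are asserted by ``enumerating 2-groups of small order,'' but nothing in your argument bounds the order of these 2-groups, so there is no finite set to enumerate; the number of elements of order $2$ is a priori unbounded. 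The paper handles $(4,4,4,4,4)$ via Frobenius' theorem on the number of solutions of $x^4=1$, which forces $i_2(G)\equiv 1\bmod 4$ and hence (by Herzog/Isaacs) that $G$ is cyclic, dihedral, generalized quaternion, or quasidihedral --- a nontrivial input you would need to supply. The $(4,4,8)$ case is killed by a short commutator computation (the unique $C_8$ is normal, and $xy$ has order $8$ for $y$ generating the other $C_4$), not by enumeration.

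Second, for the surviving profile $(3,4,4,4,6)$ you only verify that $C_3\semidirect C_4$ \emph{realizes} it; the ``only if'' direction of the theorem requires showing it is the \emph{unique} group with that profile. The paper does this with Proposition \ref{from_H} (the subgroup generated by the listed cyclic subgroups is all of $G$ or has index $2$ with an inverting involution), and that step cannot be omitted. Third, you explicitly defer the six profiles $(3,3,3,3,4)$, $(3,3,3,3,6)$, $(3,4,4,4,4)$, $(3,4,4,6,6)$, $(3,4,6,6,6)$, $(3,6,6,6,6)$ as ``the main obstacle'' without arguments. Several of these are quick --- e.g.\ $(3,4,6,6,6)$ falls to the coprime-unique-subgroups trick (Proposition \ref{lemma:ab}), and $(3,3,3,3,4)$ and $(3,4,4,4,4)$ to short normality arguments producing a $C_{12}$ or $C_6$ --- but $(3,6,6,6,6)$ requires a genuinely delicate conjugation-action argument (Lemma \ref{lemma:36666}). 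As written, the proposal is a correct skeleton of the paper's proof with the hardest vertebrae missing.
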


\section{Preliminaries}
Let $G$ be a finite group.  Denote by $C(G)$ the poset 
of cyclic subgroups of $G$,
and let $\Delta(G)=|G|-|C(G)|$. 
For any positive integer $d$, let
$n_d= |\{H\in C(G)\ :\ |H|=d\}|$ be the number of cyclic subgroups 
of order $d$. 
Since every element of $G$ generates a cyclic subgroup and $\phi(d)$ is
the number of generators of a cyclic group of order $d$, it follows that
$$\sum_{d \ge 1} n_d\phi(d)=|G|.$$
Because $|C(G)|=\displaystyle\sum_{d \ge 1} n_d$, we have
\begin{equation} 
\label{eqn:tuarn}
	\sum_{d\ge 1} n_d(\phi(d)-1) = \Delta(G).
	\tag{$\star$}
\end{equation}

We will see below that the sum (\ref{eqn:tuarn}) gives information about 
the possible cyclic subgroups of $G$. 
Because of well-known properties of the  Euler $\phi$-function 
(namely, that $\phi(d)=1$ if and only if $d=1$ or $d=2$, and 
$\phi(d)$ is even for $d>2$),
we have the following.
\begin{remark} 
\label{remark:1}
For any group $G$ and any difference $\Delta=|G|-|C(G)|$, 
\be
\item $G$ may contain any number of cyclic subgroups of order $2$, and
\item if $n_d(\phi(d)-1)$ is a term in the sum (\ref{eqn:tuarn}), 
	then $\phi(d)-1$ must be odd for all $d>2$.
\ee
\end{remark}
\begin{notation*} We write $\sigma(G)=(m_1,m_2,\ldots, m_t)$,
	$m_i>2$, 
	to mean that 
	$G$ has one cyclic subgroup of order $m_1$, 
	another of order $m_2$, etc.,
	any number of cyclic subgroups of order $2$,
	and no other non-trivial cyclic subgroups.
	For example, $\sigma(Q_8)=(4,4,4)$
	and $\sigma(S_4)=(3,3,3,3,4,4,4)$.
\end{notation*}
\begin{example}
To see how the sum in (\ref{eqn:tuarn}) gives us information about 
the possible cyclic subgroups, consider $\Delta(G)=4$.
The sum in (\ref{eqn:tuarn}) is 
one of the partitions of $4$. That is, the 
left side of equation (\ref{eqn:tuarn}) is either
$$\mbox{(a) $4$,\quad (b) $3+1$,\quad (c) $2+2$,\quad (d) $2+1+1$,\quad or\quad 
(e) $1+1+1+1$.}$$
In case (a), there is a unique divisor $d$ of $|G|$ such that
$n_d(\phi(d)-1)=4\cdot 1$, or $2\cdot 2$, or $1\cdot 4$.
(When we say $n_d(\phi(d)-1)=4\cdot 1$, for example, we really mean
$(n_d, \phi(d)-1)=(4, 1)$, but we will continue to use the more
succinct notation.)
For all other divisors $i$ of $|G|$, we have $n_i(\phi(i)-1)=0$.
So either $n_i=0$, i.e., there are no cyclic subgroups of order $i$,
or $\phi(i)=1$, i.e., the divisor $i$ is $1$ or $2$.
By Remark \ref{remark:1}, we must have $n_d=4$ and 
$\phi(d)-1=1$ (i.e., $d\in\{3,4,6\}$).
So in this case $G$ has either exactly four cyclic subgroups of order $3$,
or exactly four cyclic subgroups of order $4$, or 
four cyclic subgroups of order $6$,
any number of cyclic subgroups of order $2$, and no other 
non-trivial
cyclic subgroups.
Using our notation, either $\sigma(G)=(3,3,3,3)$, 
or $\sigma(G)=(4,4,4,4)$, or $\sigma(G)=(6,6,6,6)$.
Note that if $\sigma(G)=(6,6,6,6)$ then $G$ also has a cyclic subgroup of order $3$, 
so $\sigma(G)=(6,6,6,6)$ is impossible. 
We will codify this observation in Proposition \ref{no_cn}.
\end{example}

In the following tables we list all the possibilities for $\sigma(G)$ for 
$1\le \Delta \le 5$.  
To construct these tables we use the following facts about the Euler $\phi$-function
$$\phi(d)=2 \iff  d\in\{3, 4, 6\},$$
$$\phi(d)=4 \iff  d\in\{5, 8, 10, 12\},\mbox{ and}$$
$$\phi(d)=6 \iff  d\in\{7, 9, 14, 18\}.$$
Just as we excluded $\sigma(G)=(6,6,6,6)$ in the example, we will 
exclude  many of the possibilities for $\sigma(G)$ in these tables
in the forthcoming sections.

\begin{table}[ht]
	\caption{\label{table:1} Table for $\Delta(G)=1$}
	\centering
\begin{tabular}{|c|c|}
\hline
Partition & $\sigma(G)$ \\
\hline
1 & (3), (4), (6) \\
\hline
\end{tabular}
\end{table}

%----------------------------------------
\begin{table}[ht] 
	\caption{\label{table:2} Table for $\Delta(G)=2$}
	\centering
\begin{tabular}{|c|c|}
\hline
Partition & $\sigma(G)$ \\
\hline
2 & (3,3), (4,4), (6,6) \\
\hline
1+1 & (3,4), (3,6), (4,6) \\
\hline
\end{tabular}
\end{table}

%----------------------------------------
\begin{table}[ht]
	\caption{\label{table:3}Table for $\Delta(G)=3$} %title of the table
	\centering
\begin{tabular}{|c|c|c|}
\hline
Partition & & $\sigma(G)$ \\
\hline
3 & $3\cdot 1$ & $(3,3,3)$, $(4,4,4)$, $(6,6,6)$ \\ \cline{2-3}
  & $1\cdot 3$ &  $(5)$, $(8)$, $(10)$, $(12)$ \\
\hline
2+1 & & $(3,3,4)$, $(3,3,6)$, $(3,4,4)$, $(4,4,6)$, $(3,6,6)$, $(4,6,6)$ \\
\hline
1+1+1 & & $(3,4,6)$ \\
\hline
\end{tabular}
\end{table}

%----------------------------------------
\begin{table}[ht]
	\caption{\label{table:4}Table for $\Delta(G)=4$} %title of the table
	\centering
\begin{tabular}{|c|c|c|}
\hline
Partition & & $\sigma(G)$ \\
\hline
4 & & (3,3,3,3), (4,4,4,4), (6,6,6,6) \\
\hline
    & $3\cdot 1 + 1$ & (3,3,3,4), (3,3,3,6), (3,4,4,4), 
    (4,4,4,6), (3,6,6,6), (4,6,6,6)\\ \cline{2-3}
3+1 & $1\cdot 3 + 1$ & (3,5), (4,5), (5,6), (3,8), (4,8), (6,8), (3,10),\\
    & & (4,10), (6,10), (3,12), (4,12), (6,12) \\
\hline
2+2 & & (3,3,4,4), (3,3,6,6), (4,4,6,6) \\
\hline
2+1+1 & & (3,3,4,6), (3,4,4,6), (3,4,6,6) \\
\hline
1+1+1+1 & & none \\
\hline
\end{tabular}
\end{table}

%----------------------------------------
\begin{table}[ht]
	\caption{\label{table:5}Table for $\Delta(G)=5$} %title of the table
	\centering
\begin{tabular}{|c|c|c|}
\hline
Partition & & $\sigma(G)$ \\
\hline
5 & $1\cdot 5$ & (7), (9), (14), (18) \\ \cline{2-3}
  & $5\cdot 1$ & (3,3,3,3,3), (4,4,4,4,4), (6,6,6,6,6) \\
\hline
4+1 &  & (3,3,3,3,4), (3,3,3,3,6), (3,4,4,4,4),  \\
  & & (3,6,6,6,6), (4,4,4,4,6), (4,6,6,6,6) \\
\hline
    & $3\cdot 1 + 2$ & (3,3,3,4,4), (3,3,3,6,6), (3,3,4,4,4), \\
  & & (4,4,4,6,6), (3,3,6,6,6), (4,4,6,6,6) \\ \cline{2-3}
3+2 & $1\cdot 3 + 2$ & (3,3,5), (4,4,5), (5,6,6), (3,3,8), \\
 & & (4,4,8), (6,6,8), (3,3,10), (4,4,10), \\
 & & (6,6,10), (3,3,12), (4,4,12), (6,6,12) \\
\hline
      & $3\cdot 1+1+1$ & (3,3,3,4,6), (3,4,4,4,6), (3,4,6,6,6) \\ \cline{2-3}
3+1+1 & $1\cdot 3 +1+1$ & (3,4,5), (3,5,6), (4,5,6), (3,4,8), \\
 & & (3,6,8), (4,6,8), (3,4,10), (3,6,10),\\
 & &   (4,6,10) (3,4,12), (3,6,12), (4,6,12) \\
 \hline
 2+2+1 & & (3,3,4,4,6), (3,3,4,6,6), (3,4,4,6,6) \\
\hline
2+1+1+1 & & none \\
\hline
1+1+1+1+1& & none \\
\hline
\end{tabular}
\end{table}

\subsection{Explanation of Exclusion Tables}
Continuing our example with $\Delta=4$, we see in Table \ref{table:4}
the 27 possible values for $\sigma(G)$.  We exclude 23 of these 
possibilities in Table \ref{table4.1} which we call the ``Exclusion Table
for $\Delta(G)=4$". The remaining 4 possible values for $\sigma(G)$ are
then listed in Table \ref{table4.2} which we call the
``Revised Table for $\Delta(G)=4$".

We will invoke the following three propositions frequently in the 
exclusion tables, referring to the first as ``No $C_n$", 
the second as ``Sylow", and the third as ``Proposition \ref{lemma:ab}".

\begin{prop}\label{no_cn}(``No $C_n$") If $m$ is an entry in $\sigma(G)$ and $n|m$ ($n>2$), then $n$ must also be an entry in $\sigma(G)$.
\end{prop}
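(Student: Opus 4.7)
The statement is essentially an immediate consequence of the basic structure theorem for cyclic groups, together with the convention behind the $\sigma(G)$ notation, so my plan is simply to unwind the definitions.

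First I would invoke the fact that every cyclic group of order $m$ has, for each divisor $n$ of $m$, a unique subgroup of order $n$, and that subgroup is itself cyclic. Given the hypothesis that $m$ is an entry in $\sigma(G)$, there is a cyclic subgroup $H \le G$ with $|H| = m$. Since $n \mid m$, the subgroup $H$ contains a (unique) cyclic subgroup $K$ of order $n$, and $K$ is in particular a cyclic subgroup of $G$.

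Next I would appeal to the definition of the notation $\sigma(G)$: it lists every cyclic subgroup of $G$ of order greater than $2$ (with multiplicity). Since $K$ is cyclic and $|K| = n > 2$ by hypothesis, $K$ must appear among the entries of $\sigma(G)$, i.e., $n$ is an entry in $\sigma(G)$.

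There is no real obstacle here; the only subtlety is remembering the convention that entries of $\sigma(G)$ are required to be $>2$, which is exactly why the hypothesis $n > 2$ is needed (without it, the divisors $n = 1$ and $n = 2$ would produce subgroups that by convention are not recorded in $\sigma(G)$).
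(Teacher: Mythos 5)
Your proof is correct and follows essentially the same one-line argument as the paper: a cyclic subgroup of order $m$ contains a cyclic subgroup of order $n$ for each divisor $n\mid m$, and since $n>2$ this subgroup must be recorded in $\sigma(G)$. The extra remarks about uniqueness and the $n>2$ convention are accurate but not needed beyond what the paper states.
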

\begin{proof} A cyclic subgroup of order $m$ must contain one of order $n$ if $n|m$.
\end{proof}

\begin{prop}\label{sylow}(``Sylow") If $p$ is a prime, $p$ divides $|G|$, and $n_p$ denotes the number of times $p$ occurs in $\sigma(G)$, then $n_p\equiv 1\bmod p$.
\end{prop}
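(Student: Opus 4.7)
My plan is to deduce this from the classical theorem of Frobenius, which asserts that for any positive integer $n$, the number of elements $x\in G$ with $x^n=1$ is a multiple of $\gcd(n,|G|)$. Applied with $n=p$, and using the hypothesis that $p\mid |G|$ so that $\gcd(p,|G|)=p$, this says that the number of solutions in $G$ of the equation $x^p=1$ is divisible by $p$.

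The second step is to count these solutions directly in terms of the cyclic subgroup structure. Any $x\in G$ with $x^p=1$ is either the identity or has order exactly $p$, since $p$ is prime. Each cyclic subgroup of order $p$ contributes its $\phi(p)=p-1$ generators as elements of order $p$, and two distinct such subgroups intersect trivially, because their intersection is a proper subgroup of a group of prime order. Consequently the total number of solutions equals $1+(p-1)n_p$, where $n_p$ is the number of cyclic subgroups of order $p$.

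Combining the two counts gives $1+(p-1)n_p\equiv 0\pmod{p}$, which simplifies immediately to $n_p\equiv 1\pmod{p}$, as required.

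There is essentially no real obstacle once Frobenius's theorem is invoked; the whole argument reduces to a two-line count. The name ``Sylow'' attached to the proposition in the paper presumably reflects that one can alternatively extract the same congruence from Sylow theory by working inside a Sylow $p$-subgroup (where the number of subgroups of order $p$ is $\equiv 1\pmod p$ by a class-equation argument) and checking that this count is preserved when passing to $G$, but the Frobenius route is considerably shorter and is what I would present.
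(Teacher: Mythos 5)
Your proof is correct. The two counts are both right: Frobenius's solution-counting theorem gives $p \mid \#\{x \in G : x^p = 1\}$ since $\gcd(p,|G|)=p$, and the direct count $1+(p-1)n_p$ is valid because distinct subgroups of prime order intersect trivially and every subgroup of order $p$ is cyclic. The congruence $1+(p-1)n_p\equiv 0\pmod p$ then yields $n_p\equiv 1\pmod p$ exactly as you say.

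The paper's own proof is a one-line citation of a \emph{different} theorem of Frobenius, namely his generalization of the third Sylow theorem: if $p^k$ divides $|G|$, then the number of subgroups of order $p^k$ is congruent to $1$ modulo $p$ (applied with $k=1$). You instead invoke Frobenius's element-counting theorem on solutions of $x^n=1$ and then do the counting by hand; this amounts to giving the standard proof of the $k=1$ case of the cited theorem rather than quoting it. Your route is more self-contained (and, amusingly, rests on the same solution-counting theorem the paper itself uses later, in its Theorem on $2$-groups with an odd number of cyclic subgroups of order $4$), while the paper's citation is shorter and would also cover prime-power orders $p^k$ with $k>1$ if that were ever needed. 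Your closing guess about how the paper argues is slightly off --- it does not pass through a Sylow subgroup --- but that does not affect the validity of your proof.
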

\begin{proof} This follows immediately from Frobenius' generalization 
of the third part of Sylow's Theorem. 
(\cite{andreev}, \cite{brown}, or \cite{frobenius}.)
\end{proof}

\begin{prop}\label{lemma:ab} 
	Suppose a group $G$ has a unique cyclic subgroup $H$ of order $a$
	and a unique cyclic subgroup $K$ of order $b$. If $\gcd(a,b)=1$,
	then $G$ contains an element of order $ab$.
\end{prop}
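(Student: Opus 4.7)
The plan is to exploit uniqueness to force $H$ and $K$ to be normal, then build an element of order $ab$ by combining generators of $H$ and $K$ that commute.

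First I would observe that for any $g\in G$, the conjugate $gHg^{-1}$ is also a cyclic subgroup of order $a$, so by uniqueness $gHg^{-1}=H$; hence $H\trianglelefteq G$. The same argument shows $K\trianglelefteq G$. Next I would consider $H\cap K$: it is a subgroup of the cyclic group $H$ and of the cyclic group $K$, so it is cyclic of order dividing both $a$ and $b$. Since $\gcd(a,b)=1$, this forces $H\cap K=\{1\}$.

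Because $H$ and $K$ are normal with trivial intersection, every element of $H$ commutes with every element of $K$ (the commutator $[h,k]=h(kh^{-1}k^{-1})=(hkh^{-1})k^{-1}$ lies in both $H$ and $K$, hence is trivial). If $h$ is a generator of $H$ (of order $a$) and $k$ is a generator of $K$ (of order $b$), then $hk=kh$, so the order of $hk$ equals $\operatorname{lcm}(a,b)=ab$. Thus $hk$ is an element of $G$ of order $ab$, as required.

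The argument is almost entirely bookkeeping, so I do not expect a real obstacle; the one place to be careful is the step from ``normal with trivial intersection'' to ``elements commute,'' which I would spell out via the commutator calculation above rather than invoke without justification.
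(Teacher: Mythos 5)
Your proof is correct and follows the same route as the paper: uniqueness gives normality, $\gcd(a,b)=1$ gives trivial intersection, and the commutator argument shows the generators commute so their product has order $ab$. The only difference is that you spell out the commutator computation that the paper leaves implicit, which is a reasonable bit of added detail.
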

\begin{proof} Since $H=\langle x\rangle$ and $K=\langle y\rangle$
	are unique, they are normal. The condition $\gcd(a,b)=1$ 
	implies $H\cap K=\{1\}$.  It follows that $xy=yx$
	and hence the element $xy$ has order $ab$.
\end{proof}

For example, in Table~\ref{table4.1} these three propositions eliminate all but two 
of the entries (which must be dealt with using auxiliary lemmas).

\subsection{More Propositions}
\begin{prop}\label{from_H} If $\sigma(G)=(d_1,\ldots,d_r)$ with 
$x_1,\ldots,x_r$ being generators for the cyclic subgroups 
of order $d_1,\ldots,d_r$ respectively and 
$H=\langle x_1,\ldots x_r\rangle$, then $G\cong H$ or $G\cong H\semidirect C_2$. 
The latter case can only occur when $H$ is abelian and 
if $t$ is the generator of $C_2$ then $tht^{-1}=h^{-1}$ for all $h\in H$.
\end{prop}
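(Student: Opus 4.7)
The plan is to prove, in order, that $H$ is normal in $G$ with index at most $2$, and then that in the index-$2$ case the external generator acts on $H$ by inversion, forcing $H$ to be abelian. To start, I would observe that $\{\langle x_1\rangle,\ldots,\langle x_r\rangle\}$ is exactly the set of all cyclic subgroups of $G$ of order greater than $2$. Conjugation by any $g\in G$ sends $\langle x_i\rangle$ to $\langle gx_ig^{-1}\rangle$, another cyclic subgroup of the same order $d_i>2$, and hence to some $\langle x_j\rangle$. So conjugation permutes this generating set, giving $gHg^{-1}=H$ and hence $H\triangleleft G$. The same observation shows that any element of $G$ of order greater than $2$ already lies in some $\langle x_i\rangle\subseteq H$, so every $g\in G\setminus H$ has order exactly $2$.

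The next step is to show $[G:H]\le 2$. Fix any $g\in G\setminus H$ and $h\in H$; then $gh\in G\setminus H$ also has order $2$, so $(gh)^2=1$, and combined with $g^2=1$ this rearranges to $ghg^{-1}=h^{-1}$. So every element of $G\setminus H$ acts on $H$ by inversion. If there were two distinct cosets $g_1H$ and $g_2H$ both different from $H$, then $g_1g_2\in G\setminus H$ as well; on the one hand $g_1g_2$ would invert $H$, but on the other hand $(g_1g_2)h(g_1g_2)^{-1}=g_1h^{-1}g_1^{-1}=h$, so $g_1g_2$ would also centralize $H$. Together these force $h=h^{-1}$ for every $h\in H$, contradicting $x_1\in H$ having order $d_1>2$. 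Hence $[G:H]\le 2$.

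Finally, if $[G:H]=1$ then $G\cong H$, and if $[G:H]=2$, choosing any $t\in G\setminus H$ gives $t^2=1$, $H\cap\langle t\rangle=\{1\}$, and $G=H\semidirect\langle t\rangle$. The conjugation map $h\mapsto tht^{-1}$ is an automorphism of $H$ and, by the computation above, equals the inversion map $h\mapsto h^{-1}$; but inversion is a homomorphism precisely when $H$ is abelian, so $H$ must be abelian. I expect the only delicate step to be the exclusion of $[G:H]\ge 3$, which is exactly where the hypothesis that $\sigma(G)$ contains at least one entry (all of which exceed $2$) is essential to produce the contradiction.
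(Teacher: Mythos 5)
Your proof is correct and follows essentially the same route as the paper's: normality of $H$ via conjugation permuting the cyclic subgroups of order greater than $2$, the observation that every element outside $H$ has order $2$ so that $(gh)^2=1$ forces $ghg^{-1}=h^{-1}$, and the exclusion of a third coset by producing an element that simultaneously inverts and centralizes $H$. The only cosmetic difference is that you derive the final contradiction as $h=h^{-1}$ for all $h\in H$, whereas the paper phrases it as the element $tsx_i$ lying outside $H$ yet having order greater than $2$; these are the same argument.
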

\begin{proof}
Since $H$ has cyclic subgroups of orders $d_1,\ldots,d_r$ and is a subgroup 
of $G$ which itself has no other non-trivial cyclic subgroups of those orders (except possibly subgroups of order 2), we must have $\sigma(H)=(d_1,\ldots,d_r)$. Therefore $G=H$ is one possibility. Suppose $H<G$, then there must be a $t\in G-H$ which is necessarily of order 2. Now $H\unlhd G$, since the cyclic subgroups of a given order must be conjugated to one another and they generate $H$. 
We will show that $\langle H,t\rangle\cong H\semidirect C_2$. 
For any $h\in H$, let $\alpha=tht^{-1}$. 
Since $ht\notin H$, it must have order 2, so 
$1=htht=h\alpha t^2=h\alpha$. Thus we must have 
$tht^{-1}=h^{-1}$ and every element in $\langle H,t\rangle - H$ 
must have order 2.
Therefore $\sigma(\langle H,t\rangle)=(d_1,\ldots,d_r)$ and 
$G=\langle H,t\rangle\cong H\semidirect C_2$ is another possibility. 
Since conjugation by $t$ is an automorphism of $H$, this forces 
$H$ to be abelian. 
Finally, suppose there is an $s\notin \langle H,t\rangle$. 
By the previous arguments $shs^{-1}=h^{-1}$ for all $h\in H$. 
But then $tsh(ts)^{-1} = h$ for all $h\in H$. 
In particular, if $|x_i|=d_i>2$, then $tsx_i\notin H$, 
but $|tsx_i|>2$ giving a contradiction.
\end{proof}

\begin{prop}\label{a_2a} \mbox{}
\begin{enumerate}
\item[(i)] We have $\sigma(G)=(a)$ if and only if $a=4$ 
or $a$ is an odd prime. In that case $G\cong C_a$ or $D_{2a}$ 
and $\Delta(G)=a-2$ in either case.
\item[(ii)] We have $\sigma(G)=(a,2a)$ if and only if $a=4$ or $a$ is an odd prime. In that case $G\cong C_{2a}$ or $D_{4a}$ and $\Delta(G)=2a-4$ in either case.
\end{enumerate}
\end{prop}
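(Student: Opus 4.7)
The plan is to treat (i) and (ii) in parallel via the same three-step skeleton: restrict the possible values of $a$ using Proposition \ref{no_cn}, identify $G$ using Proposition \ref{from_H}, and finally verify the candidate groups realize the prescribed $\sigma(G)$ and compute $\Delta(G)$ from (\ref{eqn:tuarn}).

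For (i), suppose $\sigma(G)=(a)$. By ``No $C_n$'' every divisor $d>2$ of $a$ is itself an entry of $\sigma(G)$, hence $d=a$. Thus $a$ has no divisor strictly between $2$ and $a$, forcing $a=4$ or $a$ an odd prime. Writing $H=\langle x\rangle\cong C_a$ for a generator $x$ of the unique order-$a$ cyclic subgroup, Proposition \ref{from_H} gives either $G\cong C_a$ or $G\cong C_a\semidirect C_2$ with $C_2$ acting by inversion; the latter group is $D_{2a}$. Conversely, one checks by direct inspection of subgroup lattices that both $C_a$ and $D_{2a}$ have $\sigma(G)=(a)$ for the permitted $a$, and (\ref{eqn:tuarn}) then yields $\Delta(G)=\phi(a)-1$, equal to $a-2$ whenever $a$ is an odd prime.

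For (ii), the same skeleton applies. From ``No $C_n$'' applied to $2a$, every divisor $d>2$ of $2a$ must lie in $\{a,2a\}$. A short parity analysis pins down $a$: if $a$ is odd, any odd divisor $d$ of $a$ with $2<d<a$ is forbidden, so $a$ is an odd prime; if $a$ is even and $a>4$, then either an odd prime divisor of $a$ or else $4$ (when $a$ is a power of $2$ exceeding $4$) supplies a forbidden divisor of $2a$, leaving $a=4$ as the only even possibility. Once $a$ is so restricted, the unique order-$a$ subgroup sits inside the unique order-$2a$ subgroup, so with $x$ generating the order-$2a$ subgroup Proposition \ref{from_H} gives $G\cong C_{2a}$ or $G\cong C_{2a}\semidirect C_2\cong D_{4a}$. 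A direct verification yields $\sigma(C_{2a})=\sigma(D_{4a})=(a,2a)$, and (\ref{eqn:tuarn}) produces $\Delta(G)=(\phi(a)-1)+(\phi(2a)-1)=2a-4$ in each admissible case.

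The only step requiring real attention is the parity case analysis restricting $a$ in part (ii); everything else follows routinely from ``No $C_n$'', Proposition \ref{from_H}, and the identity (\ref{eqn:tuarn}).
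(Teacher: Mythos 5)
Your proposal is correct and follows essentially the same route as the paper: ``No $C_n$'' forces $a=4$ or an odd prime, and Proposition \ref{from_H} applied to $H=\langle x\rangle$ yields $C_a$ or $D_{2a}$ (resp.\ $C_{2a}$ or $D_{4a}$), followed by direct verification. Your more careful computation $\Delta(G)=\phi(a)-1$ in part (i) actually exposes a small slip in the proposition's statement: for $a=4$ one gets $\Delta(C_4)=\Delta(D_8)=1$ rather than $a-2=2$, so the formula $\Delta(G)=a-2$ in (i) holds only for odd primes (whereas the formula $2a-4$ in (ii) is correct in both cases, as you verify).
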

\begin{proof}
$(i)$ By Proposition \ref{no_cn} if $a$ had a proper odd divisor $d$, 
$d$ must appear as an entry in $\sigma(G)$, which it does not. 
Therefore $a$ must be an odd prime or a power of 2. If $a$ is a power of 2 
greater than 8, then 4 would also have to appear in $\sigma(G)$, 
which it does not. If $\sigma(G)=(a)$, when $a=4$ or an odd prime, 
we let $x$ be an element of order $a$ and apply Proposition \ref{from_H} 
to conclude that we must have $G\cong C_a$ or $D_{2a}$. 
One easily checks that that these groups satisfy $\sigma(G)=(a)$ 
and $\Delta(G)=a-2$.\\
\\
\noindent
$(ii)$ An argument analogous to that in $(i)$ shows that $a$ must be 4 or an odd prime. Suppose that $\sigma(G)=(a,2a)$. If $x$ is an element of order $2a$, then $x^2$ has order $a$. In the context of Proposition \ref{from_H}, $H=\langle x,x^2\rangle=\langle x\rangle$ and the only possibilities for $G$ are $C_{2a}$ and $D_{4a}$. One readily verifies that in these cases, $\sigma(G)=(a,2a)$ and $\Delta(G)=2a-4$.
\end{proof}

\section{Difference 1}
In this section we reprove T\u{a}rn\u{a}uceanu's theorem \cite{tuarn}
in order to illustrate the methods we will use in the later sections.

\setcounter{thm}{0}
\begin{thm} A finite group $G$ has $|G|-1$ cyclic subgroups 
	if and only if $G$ is isomorphic to one of the following groups:
	$C_3$, $C_4$, $S_3$, or $D_8$.
\end{thm}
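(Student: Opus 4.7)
The plan is to leverage the machinery developed in the Preliminaries section to dispatch this case almost mechanically. By equation (\ref{eqn:tuarn}), $\Delta(G)=1$ forces a single term $n_d(\phi(d)-1)=1$ in the sum, so $n_d=1$ and $\phi(d)=2$, giving $d\in\{3,4,6\}$. Thus $\sigma(G)$ is one of $(3)$, $(4)$, $(6)$, precisely as recorded in Table \ref{table:1}.

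Next I would eliminate $\sigma(G)=(6)$. The quickest way is Proposition \ref{no_cn}: any cyclic subgroup of order $6$ contains a cyclic subgroup of order $3$, which would have to appear in $\sigma(G)$ but does not. (Equivalently, Proposition \ref{a_2a}(i) rules it out since $6$ is neither $4$ nor an odd prime.)

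For the two surviving cases, I would simply invoke Proposition \ref{a_2a}(i) directly. It states that $\sigma(G)=(a)$ with $a=4$ or $a$ an odd prime forces $G\cong C_a$ or $G\cong D_{2a}$. Applied to $a=3$ this yields $C_3$ and $S_3=D_6$; applied to $a=4$ it yields $C_4$ and $D_8$. These are exactly the four groups in the statement.

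Finally, for the converse I would verify by direct inspection that each of $C_3$, $C_4$, $S_3$, and $D_8$ has $\Delta=1$: $C_3$ and $C_4$ are handled by the second part of Proposition \ref{a_2a}(i) (which gives $\Delta=a-2$), and $S_3$, $D_8$ are likewise covered by the same statement. Since all the heavy lifting has already been done in the Preliminaries, there is no genuine obstacle here; this section exists mainly to illustrate the template (Table lookup $\to$ exclusion via No $C_n$/Sylow/Proposition \ref{lemma:ab} $\to$ classification via Proposition \ref{from_H} or Proposition \ref{a_2a}) that will be used more substantively when $\Delta\ge 2$.
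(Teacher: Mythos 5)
Your proof is correct and follows the paper's own argument essentially verbatim: Table~\ref{table:1} reduces the problem to $\sigma(G)\in\{(3),(4),(6)\}$, Proposition~\ref{no_cn} excludes $(6)$, and Proposition~\ref{a_2a}(i) classifies the $(3)$ and $(4)$ cases as $C_3, S_3$ and $C_4, D_8$ respectively. One small caution on your converse: the formula $\Delta(G)=a-2$ you cite from Proposition~\ref{a_2a}(i) literally gives $2$ when $a=4$, whereas in fact $\Delta(C_4)=\Delta(D_8)=1$ (the general count for $\sigma(G)=(a)$ is $\Delta=\phi(a)-1$, which equals $a-2$ only for odd primes), so for $C_4$ and $D_8$ you should rely on the direct inspection you mention rather than on that formula.
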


\begin{proof} One easily checks that if $G$ is one of the listed
	groups then $G$ has $|G|-1$ cyclic subgroups.  
	Conversely, if $G$ has exactly $|G|-1$ cyclic subgroups,
	then from Table~\ref{table:1} either
	$\sigma(G)=(3), (4)$ or $(6)$.
We immediately exclude $\sigma(G)=(6)$ by Proposition \ref{no_cn}. Applying Proposition 
\ref{a_2a}(i), 
we conclude that $\sigma(G)=3$ implies $G\cong C_3$ or $D_6$ and $\sigma(G)=4$ implies $G\cong C_4$ or $D_8$. 
\end{proof}

%----------------------------------------
%\setcounter{table}{0}
%\renewcommand{\thetable}{\arabic{table}.1}
\begin{table}[ht]
	\caption{\label{table1.1} Exclusion table for $\Delta(G)=1$}
	\centering
\begin{tabular}{|c|l|} 
	\hline
	$\sigma(G)$ & Excluded by \\
	\hline
	(6) & No $C_3$ \\
	\hline
\end{tabular}
\end{table}

%----------------------------------------
%\setcounter{table}{0}
%\renewcommand{\thetable}{\arabic{table}.2}
\begin{table}[ht]
	\caption{\label{table1.2} Revised table for $\Delta(G)=1$}
	\centering
\begin{tabular}{|c|c|c|}
\hline
Partition & $\sigma(G)$ & Groups \\
\hline
1 & (3) & $C_3$, $S_3$ \\
\hline
  & (4) & $C_4$, $D_{8}$ \\
\hline
\end{tabular}
\end{table}

\section{Difference 2}
In this section we prove the following.
\setcounter{thm}{1}
\begin{thm}\label{thm:diff2}
       	A finite group $G$ has exactly $|G|-2$ cyclic subgroups 
	if and only if $G$ is isomorphic to one of the following groups:
	$C_4\times C_2$, $D_8\times C_2$, $C_6$, or $D_{12}$.
\end{thm}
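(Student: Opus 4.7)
The plan is to follow the template of the Difference~$1$ case. Table~\ref{table:2} gives six possibilities for $\sigma(G)$: $(3,3)$, $(4,4)$, $(6,6)$, $(3,4)$, $(3,6)$, and $(4,6)$. I would first eliminate four of these using the general propositions of Section~$2$, then analyze the two survivors.

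Proposition~\ref{sylow} with $p=3$ kills $(3,3)$ since $n_3 = 2 \not\equiv 1 \pmod 3$. Proposition~\ref{no_cn} kills $(6,6)$ and $(4,6)$: any cyclic subgroup of order $6$ contains one of order $3$, which must then appear in $\sigma(G)$. Proposition~\ref{lemma:ab} kills $(3,4)$, because the unique cyclic subgroups of orders $3$ and $4$ yield an element of order $12$, forcing an absent entry $12$ in $\sigma(G)$. For $\sigma(G) = (3,6)$, Proposition~\ref{a_2a}(ii) with $a=3$ immediately gives $G \cong C_6$ or $D_{12}$.

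The main obstacle is the case $\sigma(G) = (4,4)$. Let $x_1, x_2$ generate the two cyclic subgroups of order $4$ and set $H = \langle x_1, x_2\rangle$, so $\sigma(H) = (4,4)$. I would first show $H$ is abelian. Every element of $H$ has order $1$, $2$, or $4$; the element $x_1 x_2$ cannot have order $4$ (else $\langle x_1 x_2\rangle$ is a third cyclic subgroup of order $4$) nor order $1$ (else $\langle x_1\rangle = \langle x_2\rangle$), so $x_1 x_2$ has order $2$ and $(x_1 x_2)^2 = 1$. This rewrites as $x_2 x_1 x_2^{-1} = x_1^{-1} x_2^{-2}$. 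The left-hand side is conjugate to $x_1$, hence of order $4$, and it must lie in $\langle x_1\rangle$ (the alternative $\langle x_2\rangle$ would force $x_1 \in \langle x_2\rangle$). So $x_2 x_1 x_2^{-1} = x_1^m$ for some $m \in \{1, 3\}$; comparing with $x_1^{-1} x_2^{-2}$ gives $x_1^{m+1} = x_2^{-2}$, and since $x_2^{-2}$ has order $2$ we must have $m = 1$, i.e., $x_1$ and $x_2$ commute (and $x_1^2 = x_2^2$). Thus $H$ is abelian of exponent $4$, so $H \cong C_4^a \times C_2^b$; the standard count $2^{a+b-1}(2^a - 1)$ of cyclic subgroups of order $4$ equals $2$ only for $(a,b) = (1,1)$, so $H \cong C_4 \times C_2$. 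Proposition~\ref{from_H} then yields $G \cong C_4 \times C_2$ or $G \cong (C_4 \times C_2) \semidirect C_2$ with the $C_2$ acting on $H$ by inversion; since inversion fixes the order-$2$ factor of $H$ and turns the $C_4$ factor into $D_8$, the latter is isomorphic to $D_8 \times C_2$. A direct verification that each of the four listed groups satisfies $\Delta = 2$ completes the proof.
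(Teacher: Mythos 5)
Your proposal is correct, and its overall skeleton matches the paper's: the same four exclusions (Sylow for $(3,3)$, ``No $C_n$'' for $(6,6)$ and $(4,6)$, Proposition~\ref{lemma:ab} for $(3,4)$), the same appeal to Proposition~\ref{a_2a}(ii) for $(3,6)$, and the same use of Proposition~\ref{from_H} to pass from $H$ to $G$ in the $(4,4)$ case. Where you genuinely diverge is in how you pin down $H=\langle x_1,x_2\rangle$ when $\sigma(G)=(4,4)$. The paper first proves Lemma~\ref{lemma:Z} (if $G$ has fewer than $6$ cyclic subgroups of order $4$ and $tst^{-1}\in\langle s\rangle$, then $s^2=t^2$) by examining $C_4\semidirect C_4$, concludes $x^2=y^2$, and then splits into the two conjugation actions, landing on $C_4\times C_2$ or $Q_8$ and discarding $Q_8$ by counting its three cyclic subgroups of order $4$. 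You instead observe that $x_1x_2$ must have order $2$ and extract commutativity (and $x_1^2=x_2^2$) directly from $(x_1x_2)^2=1$, then classify the abelian possibilities by the subgroup count $2^{a+b-1}(2^a-1)$. Your route is self-contained and slightly shorter here, and it never needs to mention $Q_8$; the paper's route has the advantage that Lemma~\ref{lemma:Z} is reused verbatim in the $\sigma(G)=(4,4,4,4)$ analysis of the Difference~$4$ section, so the extra generality pays off later. All the individual steps in your argument check out, including the identification $(C_4\times C_2)\semidirect C_2\cong D_8\times C_2$ under the inversion action.
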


The following lemma will be used below, and later in the
Difference~4 section of the paper.

\begin{lemma}\label{lemma:Z}
If $G$ contains fewer than 6 cyclic subgroups of order 4 
and $s$ and $t$ are distinct elements of order 4, 
and $tst^{-1}\in\c{s}$, then $s^2=t^2$.
\end{lemma}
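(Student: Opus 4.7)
My plan is to exploit the fact that $tst^{-1}$ has order $4$ and lies in $\langle s\rangle\cong C_4$, so $tst^{-1}\in\{s,s^{-1}\}$, and split into these two cases. The strategy is to show that in each case, the conclusion $s^2=t^2$ either follows directly, or $\langle s,t\rangle$ itself already contains $6$ cyclic subgroups of order $4$, contradicting the hypothesis on $G$.

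Case 1: $tst^{-1}=s$, so $s$ and $t$ commute. If $\langle s\rangle=\langle t\rangle$, then since $t\ne s$ and both have order $4$, we have $t=s^{-1}$, giving $t^2=s^{-2}=s^2$. If instead $\langle s\rangle\ne\langle t\rangle$, then $\langle s\rangle\cap\langle t\rangle$ is a proper subgroup of $\langle s\rangle$, so is either $\{1,s^2\}$ or $\{1\}$. In the former case, $s^2$ is the unique element of order $2$ in $\langle t\rangle$, so $s^2=t^2$. In the latter case, $\langle s,t\rangle\cong C_4\times C_4$, which has $12$ elements of order $4$ and hence $6$ cyclic subgroups of order $4$, contradicting the assumption on $G$.

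Case 2: $tst^{-1}=s^{-1}$. First observe that $t^2$ commutes with $s$, since
\[
t^2 s t^{-2}=t(tst^{-1})t^{-1}=ts^{-1}t^{-1}=(tst^{-1})^{-1}=s.
\]
Also $\langle s\rangle\ne\langle t\rangle$: otherwise $t=s^{-1}$ would give $tst^{-1}=s$, forcing $s^2=1$. If $t^2\in\langle s\rangle$, then since $t^2$ has order $2$ we get $t^2=s^2$ and we are done. Otherwise $t^2\notin\langle s\rangle$, so $\langle s,t^2\rangle\cong C_4\times C_2$ has order $8$; since any expression of $t$ as $s^i$ or $s^i t^2$ would force $t\in\langle s\rangle$, we have $t\notin\langle s,t^2\rangle$, and hence $|\langle s,t\rangle|=16$.

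The key (and main) computation is now to count elements of order $4$ in $H=\langle s,t\rangle$. Writing a general element as $s^i t^j$ and using that $t^2$ is central in $H$ while $t s^i t^{-1}=s^{-i}$, one computes $(s^i t^j)^2=s^{2i}t^{2j}$ for $j$ even and $(s^i t^j)^2=t^{2j}$ for $j$ odd. The first formula shows $s^i t^j$ has order $4$ for $j\in\{0,2\}$ precisely when $i\in\{1,3\}$, giving $4$ elements of order $4$ in $\langle s,t^2\rangle$; the second shows that all $8$ elements with $j$ odd have square equal to $t^2\ne 1$ and hence order $4$. This yields $12$ elements of order $4$ in $H$, and therefore $6$ cyclic subgroups of order $4$, contradicting the hypothesis on $G$. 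The main obstacle is this last count, together with verifying that the subcase assumptions really force $|H|=16$; once that is in hand, the case analysis collapses cleanly to $s^2=t^2$.
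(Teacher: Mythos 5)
Your proof is correct and is essentially the paper's argument: both reduce to the situation $\langle s\rangle\cap\langle t\rangle=\{1\}$, observe that $\langle s,t\rangle$ is then a group of order $16$ isomorphic to $C_4\rtimes C_4$ (with $tst^{-1}=s^{\pm 1}$), and count $12$ elements of order $4$, hence $6$ cyclic subgroups of order $4$, contradicting the hypothesis. The only difference is organizational — you split the two actions into separate cases and verify $|\langle s,t\rangle|=16$ via the intermediate subgroup $\langle s,t^2\rangle$, whereas the paper handles both actions at once — but the key computation is the same.
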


\begin{proof} If, on the contrary, $\langle s\rangle\cap \langle t\rangle=\{1\}$, then 
$$\langle s,t\rangle\cong C_4\semidirect C_4\cong\langle s,t|s^4=t^4=1,tst^{-1}=s^{\pm 1}\rangle.$$
In either case, one readily verifies that the elements other than 
$1,s^2,t^2,s^2t^2$ all have order 4.  These 12 elements yield 6 cyclic subgroups 
of order 4, a contradiction. 
\end{proof}

We now begin the proof of Theorem \ref{thm:diff2}.
\begin{proof}[Proof of Theorem \ref{thm:diff2}]
If $G$ is any one of the groups 
$C_4\times C_2$, $D_8\times C_2$, $C_6$, or $D_{12}$,
then $G$ has $|G|-2$ cyclic subgroups.

Conversely, if $G$ is a finite group with exactly $|G|-2$ cyclic subgroups,
then from Table \ref{table:2} we know the possible values of $\sigma(G)$.
If $\sigma(G)=(3,3)$, then $G$ has two Sylow $3$-subgroups, but this is 
impossible 
by Proposition \ref{sylow} (``Sylow'').
We exclude $\sigma(G)=(6,6)$ and $\sigma(G)=(4,6)$
by Proposition~\ref{no_cn} (``No $C_n$'').
The case $\sigma(G)=(3,4)$ is excluded by Proposition \ref{lemma:ab}.
We summarize these arguments in table form in Table \ref{table2.1}.

%----------------------------------------
%\setcounter{table}{1}
%\renewcommand{\thetable}{\arabic{table}.1}
\begin{table}[ht]
	\caption{\label{table2.1} Exclusion table for $\Delta(G)=2$}
	\centering
\begin{tabular}{|c|l|} 
	\hline
	$\sigma(G)$ & Excluded by \\
	\hline
	(3,3) & Sylow \\
	(6,6) & No $C_3$ \\
	(3,4) & Proposition \ref{lemma:ab} \\
	(4,6) & No $C_3$ \\
	\hline
\end{tabular}
\end{table}

The two Propositions below show that 
we have the revised Table~\ref{table2.2} and complete the proof.
\end{proof}

\begin{prop} If $\sigma(G)=(4,4)$ then  $G=C_4\times C_2$ or $G=D_8\times C_2$.  
\end{prop}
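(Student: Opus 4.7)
The plan is to first pin down the subgroup $H := \langle s, t\rangle$ generated by chosen generators $s, t$ of the two distinct cyclic subgroups of order $4$, and then invoke Proposition \ref{from_H} to read off $G$ from $H$. Since conjugation by $t$ sends $\langle s\rangle$ to a cyclic subgroup of order $4$, the subgroup $\langle tst^{-1}\rangle$ must be either $\langle s\rangle$ or $\langle t\rangle$; the latter would force $\langle s\rangle = \langle t\rangle$, so $tst^{-1}\in\langle s\rangle$. Because there are only two cyclic subgroups of order $4$, Lemma \ref{lemma:Z} applies and yields $s^2 = t^2$, and hence $tst^{-1}$ is $s$ or $s^{-1}$ (it has order $4$, so it cannot equal $s^2$).

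If $tst^{-1} = s^{-1}$, then the relations $s^4 = 1$, $t^2 = s^2$, $tst^{-1} = s^{-1}$ are the standard presentation of $Q_8$, so $H \cong Q_8$; but $Q_8$ has three cyclic subgroups of order $4$, contradicting $\sigma(G)=(4,4)$. Therefore $s$ and $t$ commute, and setting $u := st$ gives $u^2 = s^2 t^2 = s^4 = 1$, with $u \notin \langle s\rangle$ (else $t \in \langle s\rangle$). Hence $\langle u\rangle$ is an order-$2$ complement to $\langle s\rangle$ inside $H$, and $H \cong C_4 \times C_2$.

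Proposition \ref{from_H} now forces $G \cong H$ or $G \cong H \semidirect C_2$ with the outer $C_2$ acting by inversion on $H$. The first case gives $G \cong C_4 \times C_2$. For the second, write $H = \langle a\rangle \times \langle b\rangle$ with $|a| = 4$ and $|b| = 2$; since $b$ is its own inverse, it is fixed by the outer action and hence central in $G$, while $\langle a, t\rangle$ satisfies the defining relations of $D_8$ and intersects $\langle b\rangle$ trivially, giving $G \cong D_8 \times \langle b\rangle \cong D_8 \times C_2$. The main obstacle is the $Q_8$ bookkeeping in the middle paragraph -- recognizing the defining relations and ruling the case out by the order-$4$ subgroup count -- after which everything reduces to a direct application of Proposition \ref{from_H}.
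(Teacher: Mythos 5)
Your proof is correct and follows essentially the same route as the paper: show $tst^{-1}\in\langle s\rangle$, apply Lemma \ref{lemma:Z} to get $s^2=t^2$, rule out $H\cong Q_8$ by its three order-$4$ subgroups, and finish with Proposition \ref{from_H}. The only difference is that you spell out the identifications $H\cong C_4\times C_2$ and $(C_4\times C_2)\semidirect C_2\cong D_8\times C_2$ in more detail than the paper does.
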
 
\begin{proof} 
Let $H_1=\c{x}$ and $H_2=\c{y}$ be the two cyclic subgroups of order $4$
and let $H=\c{x,y}$.  We must have $yxy^{-1}\in H_1$ (since
$yxy^{-1}\in H_2$ gives $x\in H_2$), so by Lemma \ref{lemma:Z}, 
$x^2=y^2$ and hence $H\cong C_4\times C_2$ (if $yxy^{-1}=x$), or
$H\cong Q_8$ (if $yxy^{-1}=x^{-1}$). Since $Q_8$ has three cyclic subgroups
of order $4$, we must have $H\cong C_4\times C_2$.  By Proposition 
\ref{from_H}, $G\cong C_4\times C_2$ or $G\cong (C_4\times C_2)\semidirect C_2$,
where the action of $C_2$ in the latter group is to send an element of $C_4\times C_2$
to its inverse.  It is easy to see that in this case
$(C_4\times C_2)\semidirect C_2 \cong D_8\times C_2$ and the result follows.
\end{proof}       

\begin{prop} If $\sigma(G)=(3,6)$ then $G=C_6$ or $G=D_{12}$.  \end{prop}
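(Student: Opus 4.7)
The plan is to reduce immediately to Proposition \ref{from_H} by observing that the two distinguished cyclic subgroups (one of order $3$, one of order $6$) are forced to be nested, so the subgroup $H$ they generate is already cyclic of order $6$.

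First I would let $\c{x}$ denote the unique cyclic subgroup of order $3$ and $\c{y}$ the unique cyclic subgroup of order $6$. The element $y^2$ has order $3$, so $\c{y^2}$ is a cyclic subgroup of order $3$ in $G$. By uniqueness this must coincide with $\c{x}$, and therefore $\c{x}\subseteq\c{y}$. Consequently, in the notation of Proposition \ref{from_H}, we have $H=\c{x,y}=\c{y}\cong C_6$.

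Proposition \ref{from_H} then tells us that $G\cong H$ or $G\cong H\semidirect C_2$ where the nontrivial element $t$ of $C_2$ acts on $H$ by inversion. In the first case $G\cong C_6$. In the second case the presentation $\c{y,t\mid y^6=t^2=1,\,tyt^{-1}=y^{-1}}$ is exactly the standard presentation of $D_{12}$, so $G\cong D_{12}$.

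The only thing left to check is the easy ``if'' direction, namely that both $C_6$ and $D_{12}$ really do satisfy $\sigma(G)=(3,6)$; this is a direct inspection of their cyclic subgroup lattices, since in each case the order-$3$ and order-$6$ cyclic subgroups come entirely from the (unique) rotation subgroup $C_6$, while all remaining nontrivial cyclic subgroups have order $2$. There is no real obstacle here: the key observation is simply that the uniqueness of the order-$3$ subgroup forces $\c{x}\subseteq\c{y}$, which collapses the two-generator situation of Proposition \ref{from_H} to a single cyclic group, making that proposition do all the remaining work.
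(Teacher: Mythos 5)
Your proof is correct and is essentially the paper's argument: the paper simply cites Proposition~\ref{a_2a}(ii) with $a=3$, and the proof of that proposition is exactly your observation that the order-$6$ subgroup contains the (unique) order-$3$ subgroup, so $H$ is cyclic of order $6$ and Proposition~\ref{from_H} yields $C_6$ or $D_{12}$.
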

\begin{proof} 
This follows from Proposition \ref{a_2a} (ii).	
\end{proof}

%----------------------------------------
%\setcounter{table}{1}
%\renewcommand{\thetable}{\arabic{table}.2}
\begin{table}[ht]
	\caption{\label{table2.2} Revised table for $\Delta(G)=2$}
	\centering
\begin{tabular}{|c|c|c|}
\hline
Partition & $\sigma(G)$ & Groups \\
\hline
2 & (4,4) & $C_4\times C_2$, $D_8\times C_2$ \\
\hline
1+1 & (3,6) & $C_6$, $D_{12}$ \\
\hline
\end{tabular}
\end{table}

Before moving on to the next section we make the following observation, 
which is not difficult to prove.  
\begin{observ*}
If $G$ is a group with $\Delta(G)=d$, then $\Delta(G\times C_2)= 2d$.
\end{observ*}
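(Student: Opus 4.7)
The plan is to establish the stronger numerical fact $|C(G\times C_2)|=2|C(G)|$; since $|G\times C_2|=2|G|$, the observation follows immediately because $\Delta(G\times C_2)=2|G|-2|C(G)|=2\Delta(G)$.

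Write $C_2=\langle t\rangle$ and let $\pi_1\colon G\times C_2\to G$ and $\pi_2\colon G\times C_2\to C_2$ denote the two projections. I would partition the cyclic subgroups of $G\times C_2$ into two families: those contained in $G\times\{1\}$, and those not (equivalently, those $H$ with $\pi_2(H)=C_2$). The first family is clearly in bijection with $C(G)$ and contributes $|C(G)|$ subgroups, so the main work is to show that the map $H\mapsto\pi_1(H)$ is a bijection from the second family onto $C(G)$.

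For surjectivity, given $K=\langle g\rangle\in C(G)$, I would produce an explicit lift. If $|K|$ is odd then $K\times C_2\cong C_{2|K|}$ is itself cyclic with $\pi_1$-image $K$ and $\pi_2$-image $C_2$; if $|K|$ is even then $H=\langle(g,t)\rangle$ has order $\mathrm{lcm}(|K|,2)=|K|$, and $\pi_1$ restricted to $H$ is a surjection onto $K$ between groups of equal size, hence an isomorphism. For injectivity, suppose $H_1,H_2$ are cyclic subgroups with common projection $K$ of order $n$ and with $\pi_2(H_i)=C_2$. Both sit inside $K\times C_2$, and the kernel of $\pi_1|_{H_i}$ has order at most $2$, giving $|H_i|\in\{n,2n\}$. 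If $n$ is odd, then $K\times C_2\cong C_{2n}$ is cyclic and its unique subgroup of order $n$ is $K\times\{1\}\subseteq G\times\{1\}$ (excluded), so $H_i=K\times C_2$. If $n$ is even, then $K\times C_2$ is not cyclic, forcing $|H_i|=n$ and $H_i$ to be the graph of a homomorphism $K\to C_2$; the condition $H_i\not\subseteq G\times\{1\}$ rules out the trivial homomorphism, leaving only the unique nontrivial one.

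The main obstacle is the injectivity case analysis above: one must see that the parity of $|K|$ determines which of the two a priori possible orders ($n$ or $2n$) can be realized by a subgroup not contained in $G\times\{1\}$, and in each case verify uniqueness. Everything else is bookkeeping, and no new group-theoretic tools beyond the projection maps and the structure of $C_n\times C_2$ are needed.
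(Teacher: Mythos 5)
Your argument is correct and complete. The paper states this observation without proof (``which is not difficult to prove''), so there is no official argument to compare against; your proof of the stronger statement $|C(G\times C_2)|=2|C(G)|$ via the partition into subgroups with $\pi_2(H)=\{1\}$ versus $\pi_2(H)=C_2$, together with the parity analysis showing that $\pi_1$ restricts to a bijection from the second family onto $C(G)$, is a clean way to fill that gap. The key case split — for $|K|$ odd the unique lift is $K\times C_2\cong C_{2|K|}$ itself, while for $|K|$ even it is the graph of the unique nontrivial homomorphism $K\to C_2$ — is exactly the point that needs care, and you handle both directions (existence and uniqueness) correctly.
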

Note that every group with difference $2$ is of the form $G\times C_2$
where $\Delta(G)=1$.  But we will see later that not every group 
with difference $4$ is of the form $G\times C_2$ where $\Delta(G)=3$

\section{Difference 3}
Our goal in this section is to prove the following.

\setcounter{thm}{2}
\begin{thm}\label{thm:diff3}
	A finite group $G$ has exactly $|G|-3$ cyclic subgroups 
	if and only if $G$ is isomorphic to one of the following groups:
	$Q_8$, $C_5$, or $D_{10}$.
\end{thm}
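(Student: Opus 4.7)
The plan is to imitate the recipe from the $\Delta=2$ case. In one direction, I verify directly that $Q_8$, $C_5$, and $D_{10}$ each have $|G|-3$ cyclic subgroups. For the converse, I read off from Table~\ref{table:3} the thirteen \emph{a priori} values of $\sigma(G)$ with $\Delta(G)=3$ and assemble an ``Exclusion Table for $\Delta(G)=3$''. I expect the following standard eliminations: $(3,3,3)$ and $(3,3,4)$ by ``Sylow'' (with $n_3=3$ and $n_3=2$ respectively); $(6,6,6)$, $(12)$, $(4,4,6)$, and $(4,6,6)$ by ``No $C_n$'' (a missing $C_3$); $(8)$ by ``No $C_4$''; $(10)$ by ``No $C_5$''; $(3,3,6)$ by ``Sylow'' after noting that the $C_3$ lying inside the $C_6$ is already one of the two listed copies, so $n_3=2$; and $(3,4,6)$ by Proposition~\ref{lemma:ab}, whose unique $C_3$ and $C_4$ would combine into an element of order $12\notin\sigma(G)$.

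That leaves four cases. The case $(5)$ is immediate from Proposition~\ref{a_2a}(i), yielding $G\cong C_5$ or $D_{10}$. For $(4,4,4)$ I would apply Proposition~\ref{from_H} to the subgroup $H$ generated by the three order-$4$ elements. Since $\sigma(H)=(4,4,4)$, every non-identity element of $H$ must have order $2$ or $4$ (else $\sigma(H)$ would acquire extra entries), so $H$ is a $2$-group of exponent $4$. A direct check of such groups---using Lemma~\ref{lemma:Z} to force all three $C_4$'s to share a common $C_2$, and ruling out $|H|\ge 16$ by enumerating the possibilities---identifies $H\cong Q_8$. Because $Q_8$ is non-abelian, Proposition~\ref{from_H} forbids the $H\semidirect C_2$ extension, and so $G\cong Q_8$.

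The main obstacle will be the two cases $(3,4,4)$ and $(3,6,6)$, which survive every standard filter and require \emph{ad hoc} arguments centered on the action of $G$ on its unique (hence normal) $C_3=\langle z\rangle$. For $(3,4,4)$, pick an element $s$ of order $4$: its conjugation action on $\langle z\rangle$ is either trivial, forcing $sz$ to have order $12\notin\sigma(G)$, or inversion, in which case $s^2$ centralizes $z$ and $s^2 z$ has order $6\notin\sigma(G)$---a contradiction either way. For $(3,6,6)$, write the two $C_6$'s as $\langle x\rangle$ and $\langle y\rangle$ and set $a:=x^3$, $b:=y^3$. Both $a$ and $b$ centralize $z$ (they lie in the abelian subgroups $\langle x\rangle$, $\langle y\rangle$), while $a\ne b$ because $\langle x\rangle\cap\langle y\rangle=\langle z\rangle$ contains no involution. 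Thus $\langle a,b\rangle\cong D_{2n}$ for some $n\ge 2$. The Klein-four case $n=2$ gives $\langle z,a,b\rangle\cong C_3\times C_2\times C_2$, which already contains three cyclic subgroups of order $6$, contradicting $\sigma(G)$. For $n\ge 3$, the $C_n\le\langle a,b\rangle$ must appear in $\sigma(G)=(3,6,6)$, forcing $n\in\{3,6\}$; in either sub-case the unique $\langle z\rangle$ lies inside the rotation subgroup of $\langle a,b\rangle\cong D_{2n}$, and at least one of $a$, $b$ must be a reflection that inverts $z$, contradicting its commutation with $z$. Both leftover cases therefore collapse, closing the classification.
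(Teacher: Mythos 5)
Your proposal is correct and reaches the right list of groups, and your exclusion table agrees with the paper's (Table~\ref{table3.1}) on the ten routine cases. Where you diverge is in the three substantive cases. For $(3,4,4)$ your argument is essentially the paper's Lemma~\ref{lemma:les1}: if the action of $s$ on the normal $\langle z\rangle$ is trivial you get an element of order $12$, and if it is inversion then $s^2z$ has order $6$; same idea, same outcome. For $(3,6,6)$ you genuinely depart from the paper: you pass to the two involutions $a=x^3$, $b=y^3$, which centralize $z$ but generate a dihedral group whose reflections must invert $z$ (or, in the Klein-four case, produce a $C_6\times C_2$ with three cyclic subgroups of order $6$). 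The paper's Lemma~\ref{lemma:366} instead normalizes so that $x^2=y^2$ and exhibits $\langle xy\rangle$ directly as a third $C_6$; your route is a bit longer but arguably more conceptual, and the ``two commuting-with-$z$ involutions generate a dihedral group'' trick is reusable. For $(4,4,4)$ the paper takes a very different path: it observes via Frobenius' theorem that a $2$-group with an odd number of cyclic subgroups of order $4$ has $i_2(G)\equiv 1\bmod 4$ and then quotes the classification (cyclic, dihedral, generalized quaternion, quasidihedral) to force $G\cong Q_8$ --- a tool it reuses for $\sigma(G)=(4,4,4,4,4)$ in the $\Delta=5$ section. Your elementary alternative is viable but is the one place where real work is still owed: before Lemma~\ref{lemma:Z} can be applied to each pair you must first note that in the conjugation action on $\{H_1,H_2,H_3\}$ at most one generator can act as a transposition (two transpositions would multiply to a $3$-cycle, impossible in a group of exponent $4$), and the phrase ``ruling out $|H|\ge 16$ by enumerating the possibilities'' conceals the actual argument (each pair of generators then yields $C_4\times C_2$ or $Q_8$; a $Q_8$ forces $H=Q_8$, while the all-abelian alternative is killed because no abelian group has exactly three cyclic subgroups of order $4$). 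Also, a trivial slip: Table~\ref{table:3} lists fourteen candidate values of $\sigma(G)$, not thirteen, though your case analysis does account for all of them.
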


We will need the following lemmas.

\begin{lemma} \label{lemma:les1}
If a group $G$ has a cyclic subgroup of order $4$ and a unique 
cyclic subgroup of order $3$, then $G$ has an element of order $6$.
\end{lemma}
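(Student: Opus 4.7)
The plan is to exploit the uniqueness of the order-3 subgroup to force commutation with a suitable element, and then produce an order-6 element as a product.

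First I would observe that if $\langle y\rangle$ is the unique cyclic subgroup of order $3$ in $G$, then $\langle y\rangle$ is normal in $G$, since every conjugate of $\langle y\rangle$ is again a cyclic subgroup of order $3$, and by uniqueness there is only one such subgroup. Let $x$ be a generator of a cyclic subgroup of order $4$. Then $x y x^{-1} \in \langle y\rangle$, and since $y$ has order $3$, the only possibilities are $xyx^{-1}=y$ or $xyx^{-1}=y^{-1}$.

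In the first case, $x$ and $y$ commute; since $\gcd(4,3)=1$ and $\langle x\rangle\cap\langle y\rangle=\{1\}$, the element $xy$ has order $12$, and then $(xy)^2$ has order $6$. In the second case, $xyx^{-1}=y^{-1}$ gives $x^2 y x^{-2} = x y^{-1} x^{-1} = y$, so the element $x^2$ (of order $2$) commutes with $y$ (of order $3$) and $\langle x^2\rangle\cap\langle y\rangle=\{1\}$; hence $x^2 y$ has order $6$.

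There is no real obstacle here: the argument is essentially the same mechanism as in Proposition \ref{lemma:ab}, except that uniqueness is only available on the order-$3$ side, so rather than forcing $x$ itself to commute with $y$ we pass to $x^2$ in the non-trivial case. The two cases together cover all possibilities and each produces an element of order $6$ in $G$.
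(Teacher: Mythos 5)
Your proposal is correct and follows essentially the same route as the paper: normality of the unique order-$3$ subgroup forces $xyx^{-1}=y^{\pm1}$, hence $x^2$ commutes with $y$ and $x^2y$ has order $6$. The paper merely streamlines your two cases into one by observing that $x^2yx^{-2}=y$ holds in either case.
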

\begin{proof} Suppose 
	$\langle x\rangle$ is a cyclic subgroup of order $4$ and 
	$\langle y\rangle$ is the unique cyclic subgroup of order $3$.
	Then $\langle y\rangle$ is normal, so 
	$xyx^{-1}=y$ or $xyx^{-1}=y^{-1}$. In either case
	$x^2yx^{-2} = y$ and $x^2y=yx^2$. Then the element
	$x^2y$ has order 6, which completes the proof.
\end{proof}

\begin{lemma} \label{lemma:les2}
If $G$ has exactly two cyclic subgroups of order $4$ and a cyclic subgroup 
of order $3$, then $G$ has an element of order $12$.
\end{lemma}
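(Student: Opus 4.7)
The plan is to show that the order-$3$ subgroup centralizes at least one of the two cyclic subgroups of order $4$, and then produce the order-$12$ element as a commuting product. Let $\langle x\rangle$ and $\langle y\rangle$ be the two cyclic subgroups of order $4$, and let $\langle z\rangle$ be a cyclic subgroup of order $3$.

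First, I would exploit the conjugation action of $G$ on the set $\{\langle x\rangle,\langle y\rangle\}$, which yields a homomorphism $G\to S_2$. Since $|z|=3$ and $|S_2|=2$, the element $z$ must lie in the kernel; in particular $z\langle x\rangle z^{-1}=\langle x\rangle$, so conjugation by $z$ induces an automorphism of $\langle x\rangle\cong C_4$.

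Next, I would observe that $\Aut(C_4)\cong C_2$, so the automorphism $\varphi\colon \langle x\rangle\to\langle x\rangle$ given by $\varphi(h)=zhz^{-1}$ has order dividing $2$. But $\varphi^3=\mathrm{id}$ since $z^3=1$ and the induced map on $\langle x\rangle$ satisfies $\varphi^3(h)=z^3 h z^{-3}=h$. Hence the order of $\varphi$ divides $\gcd(2,3)=1$, so $\varphi$ is the identity and $z$ commutes with $x$.

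Finally, since $\langle x\rangle\cap\langle z\rangle$ has order dividing $\gcd(4,3)=1$ and $x,z$ commute, the element $xz$ has order $\mathrm{lcm}(4,3)=12$, completing the proof. The main (and really only) subtlety is the step that forces $z$ to normalize each order-$4$ subgroup; I expect this to go through cleanly because there are exactly two such subgroups, making the orbit-size argument immediate. No delicate case analysis should be required.
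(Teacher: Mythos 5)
Your proposal is correct and follows essentially the same route as the paper: both use the conjugation action on the two order-$4$ subgroups to force $z$ into the kernel of $G\to S_2$, and then exploit $\Aut(C_4)\cong C_2$. The only cosmetic difference is that you show $z$ itself commutes with $x$ (via the $\gcd(2,3)=1$ observation) and use $xz$, whereas the paper settles for $z^2xz^{-2}=x$ and uses $xz^2$.
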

\begin{proof}
Let $H_1=\langle x\rangle$, $H_2=\langle y\rangle$, and  $\langle z\rangle$ 
be the cyclic subgroups of orders $4, 4$ and $3$ respectively.
The group $G$ acts on $\{H_1, H_2\}$ by conjugation, 
and under the induced homomorphism $G\to S_2$ we have $z\mapsto (1)$. 
In particular
$zxz^{-1}=x^{\pm 1}$, and then $z^2xz^{-2}=x$.  So $z^2x = xz^2$ and 
the element $xz^2$ has order $12$.
\end{proof}

\begin{lemma} \label{lemma:366}
	There is no group with exactly two cyclic subgroups of order~6
	and a unique subgroup of order~3.
%If a group $G$ has exactly two cyclic subgroups $H$ and $K$ 
%of order $6$, then $|H\cap K|\ne 3$.
\end{lemma}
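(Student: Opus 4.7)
The plan is to exploit the uniqueness of the cyclic subgroup of order~$3$ to funnel every element of order~$6$ into a single centralizer, and then reach a contradiction by counting involutions there. Let $\langle z\rangle$ be the unique subgroup of order~$3$ in $G$ and set $N=C_G(z)$. For any $g\in G$ of order~$6$, the element $g^2$ has order~$3$ and therefore lies in $\langle z\rangle$ by uniqueness; hence $g$ commutes with $z$, so $g\in N$. In particular, both cyclic subgroups of order~$6$ lie inside $N$.

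The next step is to set up a bijection between the involutions of $N$ and the cyclic subgroups of order~$6$ of $G$. Because $z$ is central in $N$, for each involution $t\in N$ the elements $z$ and $t$ commute and have coprime orders, so $\langle z,t\rangle$ is cyclic of order~$6$. Conversely, if $\langle g\rangle\subseteq N$ is cyclic of order~$6$, then $g^3$ is the unique involution of $\langle g\rangle$ and $\langle z,g^3\rangle=\langle g\rangle$. The map $t\mapsto\langle z,t\rangle$ is injective since $t$ is recoverable as the unique involution of $\langle z,t\rangle$. Combined with the hypothesis, this forces $N$ to contain exactly two involutions, say $t$ and $s$.

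To finish, I would derive a contradiction by analyzing $\langle t,s\rangle$. If $ts=st$ then $ts$ is a third involution of $N$. If $ts\neq st$, then $\langle t,s\rangle$ is dihedral of order $2n$ with $n\geq 3$, and a short check shows $t$, $s$, and $tst$ are three distinct involutions inside $\langle t,s\rangle\subseteq N$. Either way we contradict the count of exactly two involutions, and the lemma follows. The main obstacle is the bijection step, especially verifying that every cyclic subgroup of order~$6$ is genuinely contained in $N$; once that containment is secured, the involution count and the two-case analysis of $\langle t,s\rangle$ are essentially forced.
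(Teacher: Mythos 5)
Your proof is correct, and it takes a genuinely different route from the paper's. The paper works directly with generators: it notes that the two cyclic subgroups $\langle x\rangle,\langle y\rangle$ of order~$6$ must intersect in the unique $C_3$, normalizes so that $x^2=y^2$, observes that $xyx^{-1}$ must equal $y^{\pm1}$, and in either case exhibits $\langle xy\rangle$ as a third cyclic subgroup of order~$6$. You instead localize at $N=C_G(z)$ for the unique order-$3$ element $z$, show every order-$6$ element lies in $N$, and establish a bijection $t\mapsto\langle zt\rangle$ between involutions of $N$ and cyclic subgroups of order~$6$ of $G$; the contradiction then comes from the classical fact that no group has exactly two involutions (if $ts=st$ then $ts$ is a third; otherwise $tst$ is). All the steps check out: $g^2\in\langle z\rangle$ forces $g\in N$, the subgroup $\langle z,t\rangle$ is cyclic of order~$6$ because $z\in Z(N)$, and injectivity/surjectivity of the correspondence are as you say (the dihedral remark is not even needed, since the direct computation with $tst$ suffices). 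Your argument is more conceptual and proves slightly more --- in a group with a unique subgroup of order~$3$, the number of cyclic subgroups of order~$6$ equals the number of involutions of $C_G(z)$, hence can never be~$2$ --- whereas the paper's computation is shorter, self-contained, and stays closer to the explicit-generator style used throughout its other exclusion lemmas.
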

\begin{proof} 
Let $H_1=\c{x}$ and $H_2=\c{y}$ be the two cyclic subgroups of order~6.
In order for there to be a unique subgroup of order~3, it must be the
case that $|H_1\cap H_2|=3$. In that case, without loss of generality,
$x^2=y^2$. 
	The element
	$xyx^{-1}$ has order $6$ and cannot be in $\langle x\rangle$, 
	so either $xyx^{-1}=y$ or $xyx^{-1}=y^{-1}$.  In both
	cases one can check that the subgroup $\langle xy \rangle$
	is cyclic of order $6$ and is distinct from 
	$\langle x\rangle$ and $\langle y\rangle$, a contradiction.
\end{proof}

%-------Rule out table for difference 3-------------------
%\setcounter{table}{2}
%\renewcommand{\thetable}{\arabic{table}.1}
\begin{table}[ht]
	\caption{\label{table3.1} Exclusion table for $\Delta(G)=3$}
	\centering
\begin{tabular}{|c|l|} 
	\hline
	$\sigma(G)$ & Excluded by \\
	\hline
	(3,3,3) & Sylow \\
	(6,6,6) & No $C_3$ \\ 
	(8) & No $C_4$ \\
	(10) & No $C_5$ \\
	(12) & No $C_3$\\
	(3,3,4) & Sylow \\
	(3,3,6) & Sylow \\
	(3,4,4) & Lemma \ref{lemma:les1} \\
	(4,4,6) & No $C_3$ \\
	(3,6,6) & Lemma \ref{lemma:366} \\
	(4,6,6) & No $C_3$ \\
	(3,4,6) & Proposition \ref{lemma:ab} \\
	\hline
\end{tabular}
\end{table}

\setcounter{thm}{5}
\begin{thm}\label{herzog}If $G$ is a 2-group with an odd number of cyclic subgroups of order 4, then $G$ must be cyclic, dihedral, generalized quaternion, or quasidihedral.
\end{thm}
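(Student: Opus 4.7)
The plan is to combine an orbit-counting argument with the classical classification of $2$-groups admitting a cyclic maximal subgroup. First I would dispose of small cases directly: the $2$-groups of order at most~$8$ are $C_1$, $C_2$, $C_4$, $C_2\times C_2$, $C_8$, $C_4\times C_2$, $C_2^3$, $D_8$, $Q_8$, with $0, 0, 1, 0, 1, 2, 0, 1, 3$ cyclic subgroups of order~$4$, respectively. The ones with an odd count are precisely $C_4$, $C_8$, $D_8$, $Q_8$, all of which lie in the four listed families, establishing the base case.

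For $|G|\ge 16$ I would argue by induction. The key opening observation is that $G$ acts on the set of cyclic subgroups of order~$4$ by conjugation and all orbit sizes are powers of~$2$, so an odd total forces at least one fixed point: $G$ has a normal cyclic subgroup $N=\c{x}$ of order~$4$. Moreover, since $\Aut(N)\cong C_2$ and the nontrivial automorphism $x\mapsto x^{-1}$ fixes $x^2$, the element $x^2$ lies in $Z(G)$. My strategy is to use $N$ to locate a cyclic subgroup of $G$ of index~$2$, after which the classical classification of finite $2$-groups with a cyclic maximal subgroup applies: any such group of order $\ge 16$ is isomorphic to exactly one of $C_{2^n}$, $C_{2^{n-1}}\times C_2$, $D_{2^n}$, $Q_{2^n}$, $SD_{2^n}$ (quasidihedral), or $M_{2^n}$ (modular maximal-cyclic). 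A direct calculation in each family then finishes the argument: $C_{2^{n-1}}\times C_2$ has exactly $2$ cyclic subgroups of order~$4$ and $M_{2^n}$ likewise has an even count ($2$ in that case), while $C_{2^n}$, $D_{2^n}$, $Q_{2^n}$, and $SD_{2^n}$ have respective counts $1$, $1$, $2^{n-2}+1$, and $2^{n-3}+1$, all odd.

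The main obstacle is producing the cyclic subgroup of index~$2$. My plan here is to split on the centralizer $C_G(N)$, which has index at most~$2$ in $G$ because $\Aut(N)\cong C_2$. If $C_G(N)=G$, then $N$ is central and I would try to enlarge $N$ to a cyclic subgroup of index~$2$ by induction on $G/\c{x^2}$, using that $\c{x^2}$ is a central subgroup of order~$2$ and tracking how cyclic subgroups of order~$4$ in $G$ correspond to cyclic subgroups of orders $2$ and $4$ in the quotient. If instead $C_G(N)$ is a proper maximal subgroup, I would apply the inductive hypothesis to $C_G(N)$ after first verifying that it inherits the odd-count property; this forces $C_G(N)$ into one of the four listed families, and then $G$ is realized as a degree-$2$ extension of $C_G(N)$ whose remaining possibilities must be checked case by case. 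Executing this reduction cleanly, and in particular controlling how the parity of the count transfers between $G$ and the maximal subgroup $C_G(N)$, is the delicate part of the proof.
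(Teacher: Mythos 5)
Your opening and closing moves are sound: the parity/orbit argument does produce a normal $N=\c{x}\cong C_4$ with $x^2$ central, the base cases are handled correctly, and your counts of cyclic subgroups of order $4$ in the six families of $2$-groups with a cyclic maximal subgroup are right. The genuine gap is the entire middle: you never actually produce the cyclic subgroup of index $2$, and the step you defer as ``delicate'' is in fact the whole content of the theorem. Both branches of your plan rest on unproven parity-transfer claims. In the branch $[G:C_G(N)]=2$ you need the number of cyclic subgroups of order $4$ lying inside $C_G(N)$ to be odd; writing $z=x^2$, the subgroups of order $4$ generated by elements outside $C_G(N)$ come in pairs $\c{g},\c{gz}$ \emph{except} when $g^2=z$, so the parity you need hinges on counting exactly those exceptional subgroups, and nothing in your sketch controls them. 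The quotient argument for the central case has the same defect (a cyclic $C_4\le G$ can map to a $C_2$ or a $C_4$ in $G/\c{z}$, with collapsing), and the final ``check the degree-$2$ extensions of each of the four families case by case'' is a large body of unperformed work, since such extensions (e.g.\ $D_8\times C_2$) need not have a cyclic maximal subgroup at all. So as written this is an outline whose hardest step is missing, not a proof.

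For comparison, the paper's argument avoids all of this by converting the hypothesis into a statement about involutions: if there are $2k+1$ cyclic subgroups of order $4$ then $i_4(G)=2(2k+1)$, and Frobenius' theorem (the number of solutions of $x^4=1$ is divisible by $4$) gives $i_2(G)\equiv 1 \pmod 4$; one then cites the known classification of $2$-groups with $i_2(G)\equiv 1\pmod 4$ as cyclic, dihedral, generalized quaternion, or quasidihedral (Isaacs, Theorem 4.9, or Herzog). If you want a self-contained inductive proof along your lines, you are in effect reproving that cited classification, and the parity bookkeeping above is exactly where the difficulty lives; otherwise, the efficient route is to reduce to the involution count and quote the literature.
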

\begin{proof} 
Let $i_n(G)$ denote the number of elements of order $n$ in $G$. If the number of cyclic subgroups of order 4 is $2k+1$, then $i_4(G)=2(2k+1)$. Since $|G|\ge 4$, the number of solutions to $x^4=1$ in $G$ must be a multiple of 4 by Frobenius' 
Theorem \cite[Theorem 5]{IR}. This number is $i_4(G)+i_2(G)+i_1(G)=2(2k+1)+i_2(G)+1$, so $i_2(G)\equiv 1\bmod 4$. It is known that in this case, $G$ must be cyclic, dihedral, generalized quaternion, or quasidihedral (\cite[Theorem 4.9]{isaacs} gives a character-theoretical proof and \cite{herzog} gives an elementary one).  
\end{proof}

\begin{prop}\label{odd_4s} If $\sigma(G)=(4,\ldots,4)$ with $m$ 4's and $m$ is odd, 
	then $m=1$ and $G\cong C_4$ or $D_8$ (and $\Delta(G)=1$ in both cases) 
	or $m=3$ and $G\cong Q_8$ (and $\Delta(G)=3$).
\end{prop}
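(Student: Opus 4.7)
My plan is to combine the fact that $\sigma(G) = (4,\ldots,4)$ forces $G$ to be a 2-group with the classification in Theorem \ref{herzog}, then carry out a short case analysis over the four families of 2-groups that can appear.

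First I would observe that since every nontrivial cyclic subgroup of $G$ has order $2$ or $4$, every element of $G$ has order $1$, $2$, or $4$. By Cauchy's theorem no odd prime can divide $|G|$, so $G$ is a 2-group. Since the number of cyclic subgroups of order $4$ in $G$ is the odd integer $m$, Theorem \ref{herzog} applies and tells me that $G$ is isomorphic to one of $C_{2^n}$, $D_{2^n}$, $Q_{2^n}$, or a quasidihedral (semidihedral) group $SD_{2^n}$.

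Next I would impose the constraint that no element of $G$ has order larger than $4$ (this is where $\sigma(G)$ being supported on $\{2,4\}$ really bites). For the cyclic case $C_{2^n}$, this forces $n \leq 2$, and only $C_4$ gives $m=1$ (odd). For $D_{2^n}$, which contains a cyclic subgroup of order $2^{n-1}$, I need $2^{n-1} \leq 4$, so $n \leq 3$; $D_4 \cong C_2 \times C_2$ has $m=0$ while $D_8$ has $m=1$. For $Q_{2^n}$, the cyclic subgroup of order $2^{n-1}$ forces $n=3$, and $Q_8$ has exactly the three cyclic subgroups of order $4$ generated by $i$, $j$, $k$, so $m=3$. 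Finally, quasidihedral groups only exist for $n \geq 4$ and contain an element of order $2^{n-1} \geq 8$, so they are ruled out immediately.

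The hard work was already done in Theorem \ref{herzog}, so what remains is essentially bookkeeping: list the four families, intersect with the exponent-$4$ condition, and for each survivor read off $m$ and $\Delta$. I would conclude by noting that in the three surviving cases, $\Delta(G)$ equals $m$ (since the only nontrivial cyclic subgroups besides the $C_4$'s are $C_2$'s, each contributing $0$ to the sum $(\star)$), giving $\Delta = 1$ for $C_4$ and $D_8$ and $\Delta = 3$ for $Q_8$. I do not foresee a genuine obstacle here; the only pitfall is remembering to verify that $D_4$ and the quasidihedral family are excluded on parity/exponent grounds before invoking them.
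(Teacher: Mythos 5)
Your proposal is correct and follows essentially the same route as the paper: observe that $G$ must be a $2$-group, invoke Theorem \ref{herzog} to reduce to the cyclic, dihedral, generalized quaternion, and quasidihedral families, and then use the absence of elements of order $8$ to cut each family down to order at most $8$. The only cosmetic difference is that you run the exponent bound family-by-family while the paper states it once as ``$|G|\ge 16$ forces an element of order $8$,'' and you explicitly dispose of $D_4$ and the quasidihedral case, which the paper handles implicitly.
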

\begin{proof} 
Now $G$ must be a 2-group (if not, $G$ would have elements of non-trivial 
odd order) so by Theorem \ref{herzog}, $G$ must be cyclic, dihedral, 
generalized quaternion, or quasidihedral.  If $|G|\ge 16$, then each of these 
types of group contain an element of order 8, which is impossible. 
Therefore $|G|=4$ or $|G|=8$. 
In the first case, $G\cong C_4$ with $\sigma(G)=(4)$ and $\Delta(G)=1$. 
In the second, $G$ must either be isomorphic to $C_8$, $D_8$, or $Q_8$.
But $G$ has no element of order $8$, so either
$G\cong D_8$, or $Q_8$ 
(with $\sigma(D_8)=(4), \Delta(D_8)=1, \sigma(Q_8)=(4,4,4)$, and $\Delta(Q_8)=3$).   
\end{proof}

\begin{prop}\label{d3:prop3}
       	If $\sigma(G)=(5)$, then $G=C_5$ or $G=D_{10}$.
\end{prop}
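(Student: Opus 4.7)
The plan is to invoke Proposition \ref{a_2a}(i) directly. That proposition already characterizes all finite groups $G$ with $\sigma(G) = (a)$ where $a = 4$ or $a$ is an odd prime, concluding that $G \cong C_a$ or $G \cong D_{2a}$. Since $5$ is an odd prime, the hypothesis of Proposition \ref{a_2a}(i) is immediately met with $a = 5$, and the conclusion reads $G \cong C_5$ or $G \cong D_{10}$, which is exactly what we need.

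If instead I wanted to give a self-contained argument, I would proceed as follows. Let $H = \c{x}$ be the unique cyclic subgroup of order $5$. By Proposition \ref{from_H}, either $G \cong H$ or $G \cong H \semidirect C_2$ with the nontrivial element of $C_2$ acting on $H$ by inversion. In the first case $G \cong C_5$, and in the second case $G \cong C_5 \semidirect C_2 \cong D_{10}$.

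Finally, to close the converse direction one checks directly that $\sigma(C_5) = (5)$ (the only non-trivial proper subgroup of $C_5$ is $C_5$ itself) and that $\sigma(D_{10}) = (5)$ (the rotation subgroup contributes the single cyclic subgroup of order $5$, and the five reflections contribute cyclic subgroups of order $2$, which do not appear in $\sigma(G)$).

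The main obstacle is essentially non-existent: all of the combinatorial and group-theoretic work has been absorbed into Proposition \ref{a_2a}(i) and Proposition \ref{from_H}. The proof amounts to observing that $5$ is an odd prime and reading off the conclusion.
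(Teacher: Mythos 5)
Your proposal is correct and matches the paper exactly: the paper's proof of this proposition is the single line ``This follows from Proposition \ref{a_2a} (i).'' Your optional self-contained argument via Proposition \ref{from_H} is also sound, and is in fact just the internal mechanism of Proposition \ref{a_2a}(i) unwound.
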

\begin{proof}
This follows from Proposition \ref{a_2a} (i).
\end{proof}

\begin{prop}\label{prop:444}
If $\sigma(G)=(4,4,4)$, then $G\cong Q_8$.
\end{prop}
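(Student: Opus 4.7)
The plan is to apply Proposition \ref{odd_4s} directly. The hypothesis $\sigma(G)=(4,4,4)$ records that $G$ has exactly $m=3$ cyclic subgroups of order $4$, no other non-trivial cyclic subgroups except possibly some of order $2$. Since $m=3$ is odd, Proposition \ref{odd_4s} applies and allows only two outcomes: $m=1$ with $G\cong C_4$ or $D_8$, or $m=3$ with $G\cong Q_8$. The first case is vacuous in our situation, so we conclude $G\cong Q_8$.

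The substance of the argument has been front-loaded into the earlier machinery. In Proposition \ref{odd_4s} one first observes that $G$ must be a $2$-group (otherwise Proposition \ref{no_cn} would force an odd-order entry $>2$ into $\sigma(G)$), then invokes Theorem \ref{herzog} to restrict $G$ to the list of cyclic, dihedral, generalized quaternion, and quasidihedral groups, and finally uses the absence of an entry $8$ in $\sigma(G)$ to force $|G|\le 8$. Inspection of the short list $\{C_4, D_8, Q_8\}$ of remaining candidates finishes the classification and pins down $Q_8$ as the only one with three cyclic subgroups of order $4$.

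I do not anticipate any real obstacle, as the statement is essentially a corollary of Proposition \ref{odd_4s}. If one wanted a self-contained argument that avoids Theorem \ref{herzog}, the natural alternative would be to take generators $x, y, z$ of the three distinct cyclic subgroups of order $4$, apply Lemma \ref{lemma:Z} pairwise to conclude $x^2=y^2=z^2$ (so $G$ has a unique involution, which must be central), and then check that $\langle x, y\rangle$ already satisfies the presentation of $Q_8$ and exhausts all three subgroups of order $4$; Proposition \ref{from_H} would then rule out any proper extension, since $Q_8$ is non-abelian and therefore cannot admit the inversion action of $C_2$. This longer route confirms the same conclusion but is strictly more work than invoking Proposition \ref{odd_4s}.
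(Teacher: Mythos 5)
Your proof is correct and is exactly the paper's: the paper proves this proposition by a one-line appeal to Proposition \ref{odd_4s}, precisely as you do. Your sketched alternative avoiding Theorem \ref{herzog} is a reasonable aside but is not needed.
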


\begin{proof} 
This follows from Proposition \ref{odd_4s}.
\end{proof}

\begin{remark}
	We remark that Proposition \ref{odd_4s} gives another proof of the $\sigma(G)=(4)$
	case in the Difference~$1$ section.
\end{remark}

%----------------------------------------
%\setcounter{table}{2}
%\renewcommand{\thetable}{\arabic{table}.2}
\begin{table}[ht]
	\caption{\label{table3.2} Revised table for $\Delta(G)=3$}
	\centering
\begin{tabular}{|c|c|c|c|}
\hline
Partition & & $\sigma(G)$ & Groups \\
\hline
3 & $3\cdot 1$ & $(4,4,4)$ & $Q_8$\\ \cline{2-4}
  & $1\cdot 3$ &  $(5)$  & $C_5$, $D_{10}$\\
\hline
\end{tabular}
\end{table}

We are now ready to prove the main theorem of this section.

\begin{proof}[Proof of Theorem \ref{thm:diff3}]
If $\Delta(G)=3$, then by Table~\ref{table3.2} either $\sigma(G)=(5)$ or $\sigma(G)=(4,4,4)$.
If $\sigma(G)=(5)$, then $G\cong C_5$ or $G\cong D_{10}$ by Proposition~\ref{d3:prop3}. 
If $\sigma(G)=(4,4,4)$, then $G\cong Q_8$ by Proposition~\ref{prop:444}. 

Conversely, one checks that the groups $C_5$, $D_{10}$ and $Q_{8}$ all have
$\Delta=3$.
\end{proof}

\section{Difference 4}
In this section we prove the following theorem.
\setcounter{thm}{3}
\begin{thm}\label{thm:diff4}
	A finite group $G$ has exactly $|G|-4$ cyclic subgroups 
	if and only if $G$ is isomorphic to one of the following eleven groups:
	$C_4\times C_2\times C_2$,\ 
	$C_2\times C_2\times D_8$,\ 
	$(C_2\times C_2)\semidirect C_4$,\ 
	$Q_8\semidirect C_2$,\ 
	$C_3\times C_3$,\ 
	$(C_3\times C_3)\semidirect C_2$,\ 
	$A_4$,\ 
	$C_6\times C_2$,\ 
	$C_2\times C_2\times S_3$,\ 
	$C_8$, or
	$D_{16}$.
\end{thm}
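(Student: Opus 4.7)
The plan follows the pattern of Sections 3--5: verify the forward direction by direct inspection, then work through Table~\ref{table:4} to eliminate as many of the twenty-seven candidate values of $\sigma(G)$ as possible using ``No $C_n$'', ``Sylow'', and Proposition~\ref{lemma:ab}, supplemented by a few ad hoc lemmas (for instance, Lemma~\ref{lemma:les1} will kill entries like $(3,4,4,4)$, and a refinement of Lemma~\ref{lemma:366} will handle $(3,6,6,6)$-adjacent cases). After the exclusion table, I expect only four possibilities to survive: $(4,8)$, $(3,6,6,6)$, $(3,3,3,3)$, and $(4,4,4,4)$, each of which I would treat with its own proposition.

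The first three cases should be relatively quick. For $\sigma(G)=(4,8)$, Proposition~\ref{a_2a}(ii) immediately yields $G\cong C_8$ or $D_{16}$. For $\sigma(G)=(3,6,6,6)$, I would note that each of the three cyclic subgroups of order $6$ must contain the unique cyclic subgroup $\langle y\rangle$ of order $3$, so is of the form $\langle y\rangle\times\langle t\rangle$ for an involution $t$ commuting with $y$; the three resulting involutions commute and generate a Klein four-group, forcing $H\cong C_6\times C_2$, and then Proposition~\ref{from_H} produces both $C_6\times C_2$ and $(C_6\times C_2)\semidirect C_2\cong C_2\times C_2\times S_3$. For $\sigma(G)=(3,3,3,3)$, I would let $H$ be generated by the four order-$3$ elements and analyze the conjugation action of $H$ on its four order-$3$ subgroups: a trivial action gives $H\cong C_3\times C_3$, while a nontrivial action embeds $H$ into $S_4$ with image containing all $3$-cycles, hence $H\cong A_4$. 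Proposition~\ref{from_H} then produces the three groups $C_3\times C_3$, $(C_3\times C_3)\semidirect C_2$, and $A_4$, the $A_4$-extension being forbidden because $A_4$ is non-abelian.

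The hardest case, and the main obstacle, is $\sigma(G)=(4,4,4,4)$. Here Lemma~\ref{lemma:Z} is the key tool: because there are fewer than six cyclic subgroups of order $4$, whenever $s$ and $t$ are order-$4$ generators with $tst^{-1}\in\langle s\rangle$ we must have $s^2=t^2$. Running this across all six pairs $\langle x_i\rangle,\langle x_j\rangle$ (after verifying that no pair can intersect trivially without generating a subgroup with too many order-$4$ cyclics) shows that all four $x_i^2$ coincide in a single central involution $z$. Consequently $H/\langle z\rangle$ is generated by four distinct involutions $\bar x_1,\ldots,\bar x_4$, which heavily constrains $H$: it must be an exponent-$4$ group whose Frattini quotient has order at least $8$. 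A careful case analysis should isolate $H$ as one of the abelian group $C_4\times C_2\times C_2$ or the non-abelian groups $(C_2\times C_2)\semidirect C_4$ and $Q_8\semidirect C_2$. Applying Proposition~\ref{from_H} to each then yields the four listed groups, the order-$32$ example $C_2\times C_2\times D_8$ arising as the inversion extension of $C_4\times C_2\times C_2$.

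The step I expect to be most delicate is this final enumeration of exponent-$4$ two-groups with exactly four cyclic subgroups of order $4$: sorting out the three non-abelian structures of order $16$ that can arise, and verifying that no larger group contributes a new $H$, will require close bookkeeping of the commutators $[x_i,x_j]$ and of how the cosets $\bar x_i$ multiply in the Frattini quotient. Everything else in the proof is essentially a mechanical application of the propositions and lemmas already developed.
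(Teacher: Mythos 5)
Your overall architecture matches the paper's: the same exclusion process leaves exactly the four cases $(4,8)$, $(3,6,6,6)$, $(3,3,3,3)$, $(4,4,4,4)$, and your treatments of the first three are essentially sound. (Your route through $(3,6,6,6)$ via the three involutions is a workable alternative to the paper's conjugation-action argument, though you still owe a proof that the $t_i$ commute; this does follow, since $\c{t_i,t_j}$ is dihedral, $t_it_j$ centralizes $y$ while being inverted by $t_i$, and the only admissible element orders are $1,2,3,6$.)

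The $(4,4,4,4)$ case, however, contains a genuine error. You claim that running Lemma~\ref{lemma:Z} over all six pairs shows that all four squares $x_i^2$ coincide in a single central involution $z$, and, parenthetically, that no two of the order-$4$ subgroups can intersect trivially. Both claims are refuted by $(C_2\times C_2)\semidirect C_4=\langle a,b,c\mid a^2=b^2=c^4=1,\ ab=ba,\ ca=ac,\ cbc^{-1}=ab\rangle$ --- which is one of the eleven groups in the very statement you are proving. Its four cyclic subgroups of order $4$ are $\c{c}$, $\c{ac}$, $\c{bc}$, $\c{abc}$, with $c^2=(ac)^2$ but $(bc)^2=(abc)^2=ac^2\neq c^2$; moreover $\c{c}\cap\c{bc}=\{1\}$ even though $\c{c,bc}$ is the whole group and has only four cyclic subgroups of order $4$. (Lemma~\ref{lemma:Z} is not violated: $bc\cdot c\cdot(bc)^{-1}=ac\notin\c{c}$, so its hypothesis fails for that pair.) Consequently the downstream assertions --- that $H/\c{z}$ is generated by four involutions, and that the Frattini quotient has order at least $8$ (here it has order $4$) --- are also false, and your case analysis would wrongly exclude SmallGroup$(16,3)$ from the list. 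The paper's Proposition~\ref{prop:4444} avoids this by using the action $\phi:H\to S_4$ only to pair the subgroups off, obtaining $x_1^2=x_2^2$ and $x_3^2=x_4^2$ as two \emph{possibly distinct} involutions, forming $K_1=\c{x_1,x_2}$ and $K_2=\c{x_3,x_4}$ (each $\cong C_4\times C_2$ or $Q_8$), ruling out $|K_1\cap K_2|\in\{1,2\}$ to force $|H|=16$, and then invoking the classification of groups of order $16$. You would need to replace your ``all squares coincide'' step with an argument of this kind.
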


We list the exclusion table and the revised table for $\Delta=4$
in Tables~\ref{table4.1} and \ref{table4.2}.

%----------------------------------------
%\setcounter{table}{3}
%\renewcommand{\thetable}{\arabic{table}.1}
\begin{table}[ht]
	\caption{\label{table4.1} Exclusion table for $\Delta(G)=4$}
	\centering
\begin{tabular}{|c|l|} 
	\hline
	$\sigma(G)$ & Excluded by \\
	\hline
	(6,6,6,6) & No $C_3$ \\
	(3,3,3,4) & Sylow \\
	(3,3,3,6) & Sylow \\
	(3,4,4,4) & Lemma \ref{lemma:les1} \\
	(4,4,4,6) & No $C_3$ \\
	(4,6,6,6) & No $C_3$ \\
	(3,5) & Proposition \ref{lemma:ab} \\
	(4,5) & Proposition \ref{lemma:ab} \\
	(5,6) & No $C_3$ \\
	(3,8) & No $C_4$ \\
	(6,8) & No $C_4$ \\
	(3,10) & No $C_5$ \\
	(4,10) & No $C_5$ \\
	(6,10) & No $C_3$ \\
	(3,12) & No $C_4$ \\
	(4,12) & No $C_3$ \\
	(6,12) & No $C_3$ \\
	(3,3,4,4) & Sylow \\
	(3,3,6,6) & Sylow \\
	(4,4,6,6) & No $C_3$ \\
	(3,3,4,6) & Sylow \\
	(3,4,4,6) & Lemma \ref{lemma:les2} \\
	(3,4,6,6) & Proposition \ref{lemma:ab} \\
	\hline
\end{tabular}
\end{table}

%----------------------------------------
%\setcounter{table}{3}
%\renewcommand{\thetable}{\arabic{table}.2}
\begin{table}[ht]
	\caption{\label{table4.2} Revised table for $\Delta(G)=4$} %title of the table
	\centering
\begin{tabular}{|c|c|c|c|}
\hline
Partition & & $\sigma(G)$ & Groups \\
\hline
4 & $4\cdot 1$ & (3,3,3,3) & $C_3\times C_3$, $(C_3\times C_3)\semidirect C_2$, $A_4$\\ \cline{2-4}
  & $4\cdot 1$ & (4,4,4,4) & $C_4\times C_2\times C_2$, $C_2\times C_2\times D_8$, \\
  &  &  & $(C_2\times C_2)\semidirect C_4$, $Q_8\semidirect C_2$ \\  
\hline
    & $3\cdot 1 + 1$ & (3,6,6,6) & $C_6\times C_2$, $C_2\times C_2\times S_3$\\ \cline{2-4}
3+1 & $1\cdot 3 + 1$ & (4,8)  & $C_8$, $D_{16}$ \\
\hline
\end{tabular}
\end{table}

\begin{prop}\label{prop:3333} If $\sigma(G)=(3,3,3,3)$ then $G$ is 
	isomorphic to either 
	$C_3\times C_3$, $(C_3\times C_3)\semidirect C_2$, or $A_4$.
\end{prop}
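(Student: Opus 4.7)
The plan is to set $H = \langle x_1, x_2, x_3, x_4 \rangle$, where the $x_i$ generate the four cyclic subgroups of order $3$, and apply Proposition \ref{from_H}. This gives $G \cong H$, or $G \cong H \semidirect C_2$ with $H$ abelian and the $C_2$ inverting. So the task reduces to identifying $H$.

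Since $G$ has no element of order $9$ (it is absent from $\sigma(G)$), every Sylow $3$-subgroup of $G$ is elementary abelian. An elementary abelian $3$-group of rank $r$ contains $(3^r-1)/2$ subgroups of order $3$, which equals $1, 4, 13, \ldots$ for $r = 1, 2, 3, \ldots$. Combined with Sylow's theorem and the fact that $\sigma(G)$ has exactly four $3$'s, this leaves two cases: either (A) $G$ has a unique normal Sylow $3$-subgroup isomorphic to $C_3 \times C_3$ that already contains all four cyclic subgroups of order $3$, or (B) $n_3 = 4$ and each Sylow $3$-subgroup is isomorphic to $C_3$.

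In Case~(A), since $H$ is contained in the unique Sylow $3$-subgroup and meets each of its four order-$3$ subgroups, $H = C_3 \times C_3$. Proposition \ref{from_H} then produces the two candidates $C_3 \times C_3$ and $(C_3 \times C_3) \semidirect C_2$ (with the $C_2$ inverting every element), and a routine verification confirms that both realize $\sigma(G) = (3,3,3,3)$.

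In Case~(B) the goal is to show $H \cong A_4$; since $A_4$ is non-abelian, Proposition \ref{from_H} then forces $G = H = A_4$. First, $C_H(P) = P$ for each Sylow $3$-subgroup $P$ of $H$, because an order-$2$ element commuting with $P$ would yield an element of order $6$; hence $|N_H(P)| \in \{3,6\}$ and $|H| \in \{12, 24\}$. Next I would examine the permutation action $\phi \colon H \to S_4$ on the four Sylow $3$-subgroups: the image is transitive and contains a $3$-cycle (the image of any order-$3$ element), so it must be $A_4$ or $S_4$. The main obstacle is showing $\ker \phi$ is trivial. It has no element of order $3$, since such an element together with another Sylow $3$-subgroup would generate a $C_3 \times C_3$, contradicting Case~(B); hence $\ker \phi$ is a $2$-group. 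If $|H| = 12$, then the image must be $A_4$ and $\ker \phi$ must be trivial, giving $H \cong A_4$. If $|H| = 24$, either the image is $S_4$ (forcing $H \cong S_4$, ruled out by its $4$-cycles), or the image is $A_4$ with $|\ker \phi| = 2$, in which case $\ker \phi$ is central in $H$ (as $\Aut(C_2)$ is trivial); but any involution in $\ker \phi$ must invert every order-$3$ element of $H$ to avoid producing an element of order $6$, contradicting centrality. So $|H| = 24$ is impossible, and $H \cong A_4$, as desired.
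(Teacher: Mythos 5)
Your proof is correct and shares the paper's overall skeleton (generate $H$ by the four order-$3$ subgroups, study the conjugation action $\phi\colon H\to S_4$, identify $H$ as $C_3\times C_3$ or $A_4$, finish with Proposition \ref{from_H}), but it reaches the two cases and the triviality of $\ker\phi$ by a genuinely different route. The paper splits according to whether some $\phi(x_i)$ is the identity, using the ad hoc claim $x_ix_jx_i^{-1}\neq x_j^{-1}$ to force commutativity in the first case, and kills the kernel in the $A_4$ case by a direct commutator computation ($x_1^2=x_1ux_1^{-1}u^{-1}\in\ker\phi$). You instead organize the cases by the Sylow $3$-structure, and in the $A_4$ case you first pin down $|H|\in\{12,24\}$ via $C_H(P)=P$ and $[H:N_H(P)]=4$, then eliminate $|H|=24$ by ruling out $S_4$ (it has $4$-cycles) and a central involution (it would produce an element of order $6$). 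Your version is arguably more conceptual and makes the group-order bookkeeping explicit; the paper's is more self-contained at the level of explicit element computations. Both are valid.

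Two small points to tighten. First, ``no element of order $9$'' does not by itself give \emph{elementary abelian} Sylow $3$-subgroups (exponent $3$ does not imply abelian, e.g.\ the nonabelian group of order $27$); but your count survives, since any group of order $3^r$ and exponent $3$ has $(3^r-1)/2$ subgroups of order $3$, which already forces $|P|\le 9$, and a group of order $9$ is abelian. Second, in Case (B) you should also note that $C_H(P)$ contains no order-$3$ element outside $P$ (it would generate a $3$-group of order $9$, contradicting $|P|=3$), and that $\phi$ sends an order-$3$ element to an honest $3$-cycle rather than the identity (an order-$3$ element normalizing a $C_3$ must centralize it, again contradicting $|P|=3$). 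Both are routine in your setup but worth a sentence each.
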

\begin{proof}
Let $H_i=\langle x_i\rangle, i=1,2,3,4$, be the distinct cyclic subgroups 
of order 3 and let $H=\langle x_1,x_2,x_3,x_4\rangle$.  
We have $H$ acting on $\{H_1, H_2, H_3, H_4\}$ by 
conjugation giving a homomorphism $\phi:H\to S_4$ 
(where we identify $H_i$ with $i$ in $S_4$).
Each $x_i$ must map to either a 3-cycle or the identity in $S_4$, 
since each $x_i$ has order $3$. Also, $\phi(x_i)$ must fix $i$, 
since $x_i H_i x_i^{-1}=H_i$.\\
\\
\noindent
Claim: We have $x_ix_jx_i^{-1}\ne x_j^{-1}$.\\
\\
\noindent
Proof of claim: If $x_ix_jx_i^{-1}= x_j^{-1}$, then $(x_jx_i)^2=x_i^2\ne 1$, 
and $(x_jx_i)^3=x_j\ne 1$, hence $|x_jx_i|\ge 4$, a contradiction.\\
\\
\noindent
Returning to the proof, there are two cases: 
either $\phi(x_i)=1$ for some $i$, or $\phi$ sends all the $x_i$ 
to 3-cycles, without loss of generality 
$\phi(x_1)=(234),\phi(x_2)=(134),\phi(x_3)=(124)$, and $\phi(x_4)=(123)$. \\
\\
\noindent
In the first case, we may assume $\phi(x_1)=1$ without loss of generality. 
Then $x_1x_2x_1^{-1}\in\c{x_2}$ so either 
$x_1x_2x_1^{-1} = x_2$ or 
$x_1x_2x_1^{-1} = x_2^{-1}$.  By the claim, we must have the former, i.e.,
$x_1x_2=x_2x_1$, so $\langle x_1,x_2\rangle\cong C_3\times C_3$. 
This groups has four cyclic subgroups of order 4, so we must have 
$H=\langle x_1,x_2\rangle\cong C_3\times C_3$. 
By Proposition \ref{from_H}, $G\cong C_3\times C_3$ or  
$(C_3\times C_3)\semidirect C_2$, where the action of 
$C_2$ sends each element of $C_3\times C_3$ to its inverse.\\
\\
\noindent
In the second case we claim $G\cong A_4$. 
Since $\phi$ maps the $x_i$ to 3-cycles that generate $A_4$, 
$\im\phi=A_4$. We claim that $\ker \phi = 1$. 
Note that if $v\ne 1$ and $v\in\ker \phi$, then $|v|=2$, 
since none of the order~3 elements are in the kernel.
\\
\noindent
Let $K = \ker \phi$. 
Now $K \cap \langle x_1 \rangle = 1$, 
$\phi(\langle K,x_1 \rangle) = \langle (234) \rangle$, and from this it follows 
that if $\alpha \in \langle K,x_1 \rangle - \langle x_1 \rangle$, then $|\alpha| = 2$.\\
\\
\noindent 
Suppose that the claim is false, then there exists $u \in K, u \neq 1.$ 
Consider $ux_1 \in \langle K,x_1 \rangle$. Then $ux_1 \notin \langle x_1 \rangle$, 
for if it were then $u \in \langle x_1 \rangle$. Thus, $|ux_1| = 2$. Therefore
$$1 = (ux_1)^{2} = ux_1ux_1 = ux_1u^{-1}x_1^{-1}x_1^{2},$$
which implies that $x_1^{2} = x_1ux_1^{-1}u^{-1}$, where $x_1ux_1^{-1} \in K$ 
by the normality of $K$, hence $x_1^{2} \in K$, a contradiction.
Thus, $\ker \phi = 1$ and $H\cong A_4$. By Proposition \ref{from_H}, $G\cong H\cong A_4$.
\end{proof}

%----------------------- HERE ------------------------------
\begin{prop}\label{prop:4444} If $\sigma(G)=(4,4,4,4)$ then $G$ is isomorphic to 
$C_4\times C_2\times C_2$,
$D_8\times C_2\times C_2$,
$(C_2\times C_2)\semidirect C_4$, or
$Q_8\semidirect C_2$.
The last two groups are SmallGroup(16,3) and SmallGroup(16,13) respectively in GAP.
\end{prop}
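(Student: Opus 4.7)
The plan is to proceed as in Proposition \ref{prop:3333} and the $\sigma(G)=(4,4)$ case: let $H_i=\langle x_i\rangle$, $i=1,\ldots,4$, be the four cyclic subgroups of order~$4$, and set $H=\langle x_1,x_2,x_3,x_4\rangle$. Since $\sigma(G)=(4,4,4,4)$ has no odd prime entries, $G$ (and $H$) is a $2$-group of exponent~$4$. By Proposition \ref{from_H}, $G\cong H$ or $G\cong H\semidirect C_2$, the latter requiring $H$ abelian and the $C_2$ acting by inversion, so it suffices to classify $H$.

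The main tool is Lemma \ref{lemma:Z}: for distinct $i,j$, if $x_j$ normalizes $H_i$ then $x_i^2=x_j^2$. Writing $z_i=x_i^2$, I split into cases based on the multiset $\{z_1,z_2,z_3,z_4\}$ and the conjugation action $\phi\colon H\to S_4$ on $\{H_1,H_2,H_3,H_4\}$, in which each $\phi(x_i)$ fixes $i$.

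If all $z_i$ coincide with a single involution $z$, then $\langle z\rangle$ is characteristic in every $H_i$ and hence (since $H$ permutes the $H_i$) central in $H$; then $H/\langle z\rangle$ has exponent~$2$ and so is elementary abelian. Counting elements of $H\setminus\langle z\rangle$ with square $z$ (which must total~$8$) pins $|H|=16$ and shows $H$ is either $C_4\times C_2\times C_2$ or $Q_8\semidirect C_2 = \mathrm{SmallGroup}(16,13)$. In the remaining case, where the $z_i$ are not all equal, Lemma \ref{lemma:Z} forces some $\phi(x_j)$ to act nontrivially; following the permutation data through, together with the constraint that $\phi(x_i)$ fixes~$i$, should show $H\cong(C_2\times C_2)\semidirect C_4 = \mathrm{SmallGroup}(16,3)$.

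Once $H$ is pinned down, Proposition \ref{from_H} completes the proof: for $H\cong C_4\times C_2\times C_2$, both $G=H$ and $G=H\semidirect C_2$ are possible, the latter being $D_8\times C_2\times C_2$ (inversion restricts to $D_8$ on the $C_4$-factor and acts trivially on $C_2\times C_2$); for $H$ non-abelian only $G=H$ is possible, yielding the two SmallGroup entries. The main obstacle will be the enumeration step pinning $|H|=16$ and separating the two non-abelian possibilities. In particular, one must rule out any $2$-group of order $\geq 32$ generated by four cyclic subgroups of order~$4$ with only $8$ order-$4$ elements in total, and one must identify $\mathrm{SmallGroup}(16,3)$ and $\mathrm{SmallGroup}(16,13)$ from generator-relation data extracted from the $\phi$-action and the multiset of squares.
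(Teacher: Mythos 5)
Your setup matches the paper's: the same $H_i$, $H$, the conjugation action $\phi:H\to S_4$, Lemma~\ref{lemma:Z}, and Proposition~\ref{from_H} to pass from $H$ to $G$. Your organizing case split --- whether the squares $z_i=x_i^2$ all coincide --- is also consistent with the answer (they coincide for $C_4\times C_2\times C_2$ and $Q_8\semidirect C_2$, and fall into two pairs for $(C_2\times C_2)\semidirect C_4$). But the two steps that carry all the difficulty are exactly the ones you defer with ``should show'' and ``the main obstacle will be,'' so as written this is an outline with a genuine gap rather than a proof. Concretely: (i) in the ``all squares equal'' case, the observation that $H/\langle z\rangle$ is elementary abelian gives $|H|=2^{k+1}$ but does not by itself pin $k=3$; you must argue that the squaring map, viewed as an $\mathbb{F}_2$-quadratic form on $H/\langle z\rangle$ whose polarization is the commutator pairing, cannot take the value $z$ on exactly $4$ of the $2^k$ cosets when $k\ge 4$ (and then still separate $C_4\times C_2\times C_2$ from $Q_8\semidirect C_2$ inside order $16$); (ii) in the ``squares not all equal'' case, ``following the permutation data through'' is precisely the content that needs to be supplied --- nothing in the proposal bounds $|H|$ or produces a presentation identifying $\mathrm{SmallGroup}(16,3)$.

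The paper closes both gaps with a different decomposition that you may want to adopt: after normalizing $\phi(x_1)\in\{(1),(34)\}$, Lemma~\ref{lemma:Z} forces the pairing $x_1^2=x_2^2$ and $x_3^2=x_4^2$, so $K_1=\langle x_1,x_2\rangle$ and $K_2=\langle x_3,x_4\rangle$ are each isomorphic to $C_4\times C_2$ or $Q_8$ and are normal in $H=K_1K_2$. The order formula $|H|=|K_1||K_2|/|K_1\cap K_2|$ then reduces everything to a short case analysis on $|K_1\cap K_2|\in\{1,2,4\}$: the first two possibilities each force too many cyclic subgroups of order $4$ (e.g.\ a $C_4\times C_4$ quotient or subgroup, which has six), leaving $|H|=16$, after which the classification of groups of order $16$ finishes the identification. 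If you want to keep your case split on the multiset of squares, you will need to supply arguments of comparable weight for (i) and (ii); the quadratic-form count in (i) is doable but is not shorter than the paper's intersection argument, and (ii) currently has no substance behind it.
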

\begin{proof}

Suppose the four cyclic subgroups of order~4 are $H_i=\langle x_i\rangle$ 
for $i=1, 2, 3, 4$. If $H=\langle x_1,x_2,x_3,x_4\rangle$ then $H$ acts on the set 
$\{H_1, H_2, H_3, H_4\}$ by conjugation, inducing a homomorphism
$\phi: H\rightarrow S_4$ 
(as usual, when representing a permutation of the 
$H_i$ as an element of $S_4$, we will replace $H_i$ by $i$).
Now the order of $\phi(x_i)$ must divide 4 and $\phi(x_i)$ must fix $i$. 
This eliminates 4-cycles or the product of two disjoint 2-cycles as possibilities. 
In particular, $\phi(x_1)=(1), (23),(24)$, or $(34)$ and, 
without loss of generality, we may assume $\phi(x_1)=(1)$ or $(34)$. 
Note that $x_1 H_2 x_1^{-1}=H_2$ in either case.
Similarly, $\phi(x_2)=(1), (13),(14)$, or $(34)$. However, in the second and third cases $\phi(x_1x_2)$ is a
3-cycle, which is impossible (as all the elements of $H$ have orders 1,2, or 4). Therefore $\phi(x_2)=(1)$ or $(34)$. Analogous arguments show 
$\phi(x_3)=(1)$ or $(12)$ and $\phi(x_4)=(1)$ or $(12)$.\\
\\
\noindent
Because $x_1 H_2 x_1^{-1}=H_2$, by Lemma \ref{lemma:Z}, $x_1^2=x_2^2$. 
If $x_1x_2x_1^{-1}=x_2$, then 
$K_1=\langle x_1,x_2\rangle\cong C_4\times C_2$, and if $x_1x_2x_1^{-1}=x_2^{-1}$, then $K_1\cong Q_8$. Similarly, 
$K_2=\langle x_3,x_4\rangle\cong C_4\times C_2$ or $Q_8$. In all of the cases above, $K_1,K_2\unlhd H$. \\
\\
\noindent
We claim that $|H|=16$. We have $H=K_1K_2$, so $|H|=|K_1||K_2|/|K_1\cap K_2|$. 
We cannot have $K_1=K_2$, since then $H=K_1=K_2$ and 
$C_4\times C_2$ has only two cyclic subgroups of order 4 and 
$Q_8$ has only three. If either $K_1\cong Q_8$ or $K_2\cong Q_8$ 
(without loss of generality, assume it's $K_1$), then there is a 
third cyclic subgroup of order 4 in $K_1$, which must be either 
$\langle x_3\rangle$ or $\langle x_4\rangle$. This gives us
$|K_1\cap K_2|=4$ and hence $|H|=16$ as desired. 
We may therefore assume that $K_1\cong K_2\cong C_4\times C_2$.\\
\\
\noindent
If $|K_1\cap K_2|=1$, then $H\cong K_1\times K_2\cong C_4^2\times C_2^2$ 
and $C_4^2$ itself has 6 cyclic subgroups of order 4, which is more than 
we are allowed.
Consequently,
$H/(K_1\cap K_2)\cong (K_1/(K_1\cap K_2))\times(K_2/(K_1\cap K_2))$ 
is abelian, being the product of two groups each of which is of 
order at most $4$.
\\
\noindent
If $|K_1\cap K_2|=2$, then $K_1\cap K_2$ is a subgroup of $K_1$ of order 2 and must therefore be generated by $x_1^2, x_1x_2$, or $x_1^{-1}x_2$.  The latter two cases are equivalent (we merely choose $x_1^{-1}$ to be the generator of $H_1$ and repeat the analogous arguments as those with $x_1$ as the generator).\\
\\
\noindent
Case 1: Suppose $K_1\cap K_2\le K_1$ is generated by $x_1^2=x_2^2$. 
Since the image of $x_1x_3x_1^{-1}x_3^{-1}$ in 
$H/(K_1\cap K_2)$ is trivial, we must have 
$x_1x_3x_1^{-1}x_3^{-1}=x_1^2$ or $1$. In the former case, 
$x_3x_1x_3^{-1}=x_1^{-1}$ and $\langle x_1,x_3\rangle\cong Q_8$. But here, 
there is a third cyclic subgroup of order 4 in $Q_8$, so either 
$x_2$ or $x_4$ is in $\langle x_1,x_3\rangle$. In the first subcase, $x_2$ does not commute with $x_1$ and in the second $x_4$ does not commute wth $x_3$, contradicting the fact that $\langle x_1,x_2\rangle$ and $\langle x_3,x_4\rangle$ are abelian. So the latter case holds and $x_1$ and $x_3$ must commute. Replacing $x_3$ by $x_4$ and $x_1$ by $x_2$ and repeating the argument above shows that $H$ must be abelian. Now $|H|=32$ and $H$ only has elements of order 1,2, or 4, so $H\cong C_4\times C_2^3$ or $C_4^2\times C_2$. In both cases, $H$ has too many cyclic subgroups of order 4 (8 in the first case and 12 in the second).  An analogous argument shows that $K_1\cap K_2\le K_2$ cannot be generated by $x_3^2=x_4^2$.\\  
\\
\noindent
Case 2: Suppose $K_1\cap K_2\le K_1$ is generated by $x_1x_2$ and $K_1\cap K_2\le K_2$ is generated by $x_3x_4$. In this case $H/(K_1\cap K_2)\cong C_4\times C_4$, which has 6 cyclic subgroups of order 4, giving a contradiction.\\
\\
\noindent
Therefore $|K_1\cap K_2|=4$ and $|H|=16$. Rather than proceeding 
further using first principles, we will invoke the classification of 
groups of order 16.  
By \cite[Table 1]{wild},
the only groups of order 16 with 4 cyclic subgroups of order 4 and no cyclic subgroups of larger order are $C_4\times C_2\times C_2$,
$$(C_2\times C_2)\semidirect C_4\cong\langle a,b,c|a^2=b^2=c^4=1,ab=ba,ca=ca,cbc^{-1}=ab\rangle,\mbox{ or}$$
$$Q_8\semidirect C_2\cong\langle a,b,c|a^4=c^2=1,a^2=b^2,bab^{-1}=a^{-1},ca=ca,cbc^{-1}=a^2b\rangle.$$
\\
\noindent
By Proposition \ref{from_H}, the only groups with $\sigma(G)=(4,4,4,4)$ are 
the groups above and $(C_4\times C_2\times C_2)\semidirect C_2$ 
where the action of $C_2$ on 
$C_4\times C_2\times C_2$ is $x\rightarrow x^{-1}$. 
It is straightforward to verify that this group is isomorphic to $D_8\times C_2\times C_2$.
\end{proof}
%----------------------- HERE ------------------------------

\begin{prop}\label{prop:3666}
       	If $\sigma(G)=(3,6,6,6)$, then 
	$G\cong C_6\times C_2$ 
	or $G\cong C_2\times C_2\times S_3$.
\end{prop}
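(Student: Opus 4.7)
The plan is to pin down the Sylow structure of $G$ directly and then read off the two possibilities from a small case analysis, bypassing Proposition \ref{from_H} in favour of a cleaner counting argument.

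Since $\sigma(G)=(3,6,6,6)$ every nontrivial element of $G$ has order in $\{2,3,6\}$, so $|G|=2^a 3^b$. Because $G$ has no element of order $9$, any Sylow $3$-subgroup has exponent $3$ and is elementary abelian; but $G$ has only one cyclic subgroup of order $3$, so $b=1$. Because $G$ has no element of order $4$, any Sylow $2$-subgroup $P$ has exponent $2$, i.e.\ $P\cong C_2^a$. Let $\langle z\rangle$ be the unique $C_3$. Since $\langle z\rangle$ is a normal Hall subgroup, Schur--Zassenhaus gives $G=\langle z\rangle\rtimes P$, and the conjugation action $P\to \Aut(\langle z\rangle)=C_2$ has kernel $K\le P$ with $[P:K]\in\{1,2\}$.

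The key step will be counting elements of order $6$ in this framework. Write an arbitrary element as $g=z^i s$ with $i\in\{0,1,2\}$ and $s\in P$. If $s\in K$ then $z$ and $s$ commute, so $|g|=\operatorname{lcm}(|z^i|,|s|)$ and $|g|=6$ exactly when $i\ne 0$ and $s\ne 1$; if $s\in P\setminus K$ then $szs^{-1}=z^{-1}$, and a short computation gives $g^2 = z^i (sz^i s^{-1}) s^2 = z^i z^{-i} = 1$, so $|g|\le 2$. Hence $G$ has exactly $2(|K|-1)$ elements of order $6$, giving $|K|-1$ cyclic subgroups of order $6$. Setting $|K|-1=3$ forces $|K|=4$, so $K\cong C_2\times C_2$ and $|P|\in\{4,8\}$.

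To finish: if $|P|=4$ then $P=K$ acts trivially on $\langle z\rangle$ and $G\cong \langle z\rangle\times P\cong C_6\times C_2$. If $|P|=8$ then $P\cong C_2^3$ is elementary abelian, so for any $s_0\in P\setminus K$ we may decompose $P=K\times \langle s_0\rangle$; since $K$ is central in $G$ and $\langle z\rangle\rtimes\langle s_0\rangle\cong S_3$, we obtain $G\cong K\times S_3 \cong C_2\times C_2\times S_3$. A routine verification confirms that both groups do realise $\sigma=(3,6,6,6)$. The only genuine obstacle is organising the order-$6$ element count cleanly; once that tally is in hand, the rest is a mechanical semidirect-product assembly.
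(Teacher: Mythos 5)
Your argument is correct in substance and takes a genuinely different route from the paper. The paper works with generators $x,y,z$ of the three cyclic subgroups of order $6$ (noting $x^2=y^2=z^2$), studies the conjugation action on those three subgroups via a homomorphism to $S_3$ to force two of the generators to commute, identifies $\langle x,y\rangle\cong C_6\times C_2$, and then invokes Proposition \ref{from_H} to adjoin at most one further involution acting by inversion. You instead determine the global structure first: $|G|=2^a\cdot 3$, the Sylow $2$-subgroup $P$ is elementary abelian, $G=\langle z\rangle\rtimes P$ with $\langle z\rangle$ the normal $C_3$, and the number of cyclic subgroups of order $6$ equals $|K|-1$, where $K$ is the kernel of the action of $P$ on $\langle z\rangle$. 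That single count does all the work, and the endgame ($G\cong \langle z\rangle\times P$ or $G\cong K\times(\langle z\rangle\rtimes\langle s_0\rangle)$, using that $K$ is central) is mechanical. Your method is more self-contained but is tailored to the case where the odd part is a normal $C_3$; the paper's conjugation-action-plus-Proposition-\ref{from_H} template is the one it reuses across many other values of $\sigma$.

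One assertion needs repair: a $3$-group of exponent $3$ need not be elementary abelian (the extraspecial group of order $27$ and exponent $3$ is a counterexample), so the sentence deducing $b=1$ is not valid as written. The conclusion is still immediate: a noncyclic $p$-group with $p$ odd always has more than one subgroup of order $p$, so the uniqueness of the cyclic subgroup of order $3$ forces the Sylow $3$-subgroup to be cyclic, and exponent $3$ then gives $C_3$. With that one-line fix the proof is complete; the computation $(z^is)^2=s^2=1$ for $s\notin K$, the tally of $2(|K|-1)$ elements of order $6$ (hence $|K|-1$ subgroups, each having exactly two generators), and the final splitting are all sound.
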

\begin{proof}
Let $G$ be a finite group with exactly three cyclic subgroups of order 
$6$, exactly one cyclic subgroup of order $3$, and any number of 
cyclic subgroups of order $2$.

Let 
$H_1=\langle x\rangle$,
$H_2=\langle y\rangle$,
$H_3=\langle z\rangle$ be the three cyclic subgroups of order $6$ and let $H=\langle x,y,z\rangle$.
Then we may choose generators so that $x^2=y^2=z^2=w$, 
and $\langle w\rangle \cong C_3$.

We have $H$ acting on $\{H_1, H_2, H_3\}$ by conjugation, which gives a
homomorphism $\phi: H\to S_3$.  We have $\phi(x) = 1$ or $\phi(x)=(23)$; $\phi(y)=1$ or $\phi(y)=(13)$; and 
$\phi(z)=1$ or $\phi(z)=(12)$.
Note also that $\phi(w)=1$.

\smallskip

\noindent
We claim that no two of $x,y,z$ can map to a transposition. Without loss of generality, assume that $\phi(x)=(23)$ and $\phi(y)=(13)$. Then $\phi(xy)=(123)$, so $|xy|$ must be 3 or 6. 
But only $w^{\pm 1}$ has order~3, 
and $x^{\pm 1},y^{\pm 1},z^{\pm 1}$ are the only elements of order 6,
and these all map to the identity or a transposition. This yields a contradiction.

\smallskip

By the claim, we may assume, without loss of generality, that $\phi(x)=1$, so $x\langle y\rangle x^{-1}=\langle y\rangle$. 
Since $x^2=y^2$, 
$\langle x,y\rangle = \langle x^3,y\rangle$. 
We also have $\langle y\rangle \unlhd \langle x^3,y\rangle$ and 
$\langle x^3\rangle \cap \langle y\rangle =1$, 
so $\langle x,y\rangle\cong C_6\semidirect C_2$. 
This means $\langle x,y\rangle \cong C_6\times C_2$ or $D_{12}$. 
But $\langle x,y\rangle$ is supposed to have at least two cyclic subgroups 
of order 6 and $D_{12}$ only has one. Since $C_6\times C_2$ has three 
subgroups of order 6 and one of order 3, 
$H=\langle x,y\rangle\cong C_6\times C_2$. By Proposition \ref{from_H}, 
$G\cong C_6\times C_2$ or $G\cong (C_6\times C_2)\semidirect C_2$, 
where the action of $C_2$ sends each element of $C_6\times C_2$
to its inverse.
It is straightforward to verify that this latter group is isomorphic to 
$C_2\times C_2\times S_3$.\\
\\
\noindent
One readily verifies that $C_6\times C_2$ and $C_2\times C_2\times S_3$ both satisfy the conditions of the theorem.
\end{proof}

\begin{prop}\label{prop:48}
If $\sigma(G)=(4,8)$ then $G$ is isomorphic to either $C_8$ or $D_{16}$.
\end{prop}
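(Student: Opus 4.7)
The plan is to invoke Proposition~\ref{from_H} after first collapsing the generating set. Let $\langle y\rangle$ be the unique cyclic subgroup of order $4$ and $\langle x\rangle$ the unique cyclic subgroup of order $8$ guaranteed by $\sigma(G)=(4,8)$. The key observation is that $\langle x^2\rangle$ is itself a cyclic subgroup of order $4$; since $G$ has only one such subgroup, $\langle y\rangle = \langle x^2\rangle \subseteq \langle x\rangle$. Thus in the notation of Proposition~\ref{from_H} the subgroup generated by representatives of the non-trivial cyclic subgroups collapses: $H = \langle y, x\rangle = \langle x\rangle \cong C_8$.

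Applying Proposition~\ref{from_H} directly, $G \cong H \cong C_8$, or $G \cong H \semidirect C_2$ where the non-trivial element $t$ of $C_2$ acts by inversion on $C_8$. The latter is, by definition, $\langle x, t \mid x^8 = t^2 = 1,\ txt^{-1} = x^{-1}\rangle \cong D_{16}$.

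For the converse, one checks quickly that both listed groups actually realize $\sigma(G)=(4,8)$: the subgroup lattice of $C_8$ consists of the trivial subgroup together with one cyclic subgroup of each order $2,4,8$; and $D_{16}$ has the rotation subgroup $\langle x\rangle \cong C_8$ (contributing one cyclic subgroup of each order $2,4,8$) together with eight order-$2$ reflection subgroups, so $\sigma(D_{16})=(4,8)$ as well.

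There is no real obstacle here: once one notices that the order-$4$ subgroup is forced to lie inside the order-$8$ subgroup, the structural work is entirely handled by Proposition~\ref{from_H}, and the remainder is the routine verification that $C_8$ and $D_{16}$ have the claimed $\sigma$-signature.
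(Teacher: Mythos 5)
Your argument is correct and is essentially the paper's own: the paper simply cites Proposition~\ref{a_2a}(ii), whose proof is exactly your observation that the unique order-$4$ subgroup must be $\langle x^2\rangle$, so $H=\langle x\rangle\cong C_8$ and Proposition~\ref{from_H} yields $C_8$ or $C_8\semidirect C_2\cong D_{16}$. Nothing to add.
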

\begin{proof}
	This follows from Proposition \ref{a_2a} (ii).
\end{proof}

We are now ready to prove Theorem~\ref{thm:diff4}.

\begin{proof}[Proof of Theorem \ref{thm:diff4}]
Our work above shows that if 
$\Delta(G)=4$, then $\sigma(G) = (3,3,3,3)$, $(4,4,4,4)$, $(3,6,6,6)$ or $(4,8)$.
If $\sigma(G)=(3,3,3,3)$ then $G$ is 
   isomorphic to either $C_3\times C_3$,
   $(C_3\times C_3)\semidirect C_2$, or $A_4$ by Proposition~\ref{prop:3333}.
If $\sigma(G)=(4,4,4,4)$ then $G$ is isomorphic to either 
$C_4\times C_2\times C_2$, $C_2\times C_2\times D_8$,
$(C_2\times C_2)\semidirect C_4$, or $Q_8\semidirect C_2$
   by Proposition~\ref{prop:4444}.
If $\sigma(G)=(3,6,6,6)$, then $G\cong C_6\times C_2$ 
	or $G\cong C_2\times C_2\times S_3$ by Proposition~\ref{prop:3666}.
If $\sigma(G)=(4,8)$ then $G\cong C_8$ or $G\cong D_{16}$ by Proposition~\ref{prop:48}. 

Conversely, one checks all of the eleven groups listed have $\Delta=4$.
\end{proof}

\section{Difference 5}
\setcounter{thm}{4}
\begin{thm}\label{thm:diff5}
	A finite group $G$ has exactly $|G|-5$ cyclic subgroups 
	if and only if $G$ is isomorphic to one of the following groups:
        $C_7$, $D_{14}$, or $C_3\semidirect C_4$.
\end{thm}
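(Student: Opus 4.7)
My plan is to follow the template of the preceding sections. Table~\ref{table:5} lists roughly forty candidate values of $\sigma(G)$ for $\Delta(G)=5$, and the goal is to produce an exclusion table whose only survivors are $\sigma(G)=(7)$ and $\sigma(G)=(3,4,4,4,6)$. The first will give $C_7$ and $D_{14}$ via Proposition~\ref{a_2a}(i), and the second will be identified as $C_3\semidirect C_4$ by a dedicated proposition; a one-line verification then confirms that all three listed groups have $\Delta=5$.

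Most entries fall to the three standard propositions together with the lemmas already established. ``No $C_n$'' eliminates every entry containing a $6$, $8$, $9$, $10$, $12$, $14$, or $18$ without the matching smaller divisor---e.g.\ $(9)$, $(14)$, $(18)$, $(6,6,6,6,6)$, $(5,6,6)$, $(4,4,4,4,6)$, $(6,6,8)$, $(3,3,10)$, $(3,3,12)$, $(4,5,6)$, $(3,6,8)$, and so on. ``Sylow'' rules out configurations with $n_p\not\equiv 1\pmod p$, such as $(3,3,3,3,3)$, $(3,3,5)$, $(3,3,8)$, and all of the partition-($3\cdot 1+2$) and partition-($2+2+1$) entries with $n_3\in\{2,3\}$. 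Proposition~\ref{lemma:ab} kills unique coprime pairs whose product order is absent, including $(3,5,6)$ and $(3,4,6,6,6)$. Lemma~\ref{lemma:les1} kills $(3,4,4,4,4)$, $(3,4,5)$, and $(3,4,8)$. Lemma~\ref{lemma:les2} kills $(3,3,4,4,6)$ and $(3,4,4,6,6)$. Proposition~\ref{odd_4s} eliminates $(4,4,4,4,4)$.

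A handful of cases require bespoke arguments. For $(3,3,3,3,4)$ the unique $C_4$ is normal, so its characteristic order-$2$ subgroup is central, and its product with any order-$3$ element has order $6$, contradicting $n_6=0$. For $(3,3,3,3,6)$ the unique normal $C_6$ contains a characteristic $C_3$; a Sylow analysis forces the Sylow $3$-subgroup to be a unique $C_3\times C_3$, after which $(C_3\times C_3)\times\langle x^3\rangle\le G$ already contains four cyclic subgroups of order $6$, contradicting $n_6=1$. For $(3,6,6,6,6)$ each of the four $C_6$'s contributes an involution $z_i$ commuting with the unique $C_3$ generator; analyzing the order of $z_iz_j$ shows the $z_i$ commute pairwise with products again in $\{z_1,z_2,z_3,z_4\}$, so $\{1,z_1,z_2,z_3,z_4\}$ would be a subgroup of order $5$ containing four involutions, impossible. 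For $(4,4,5)$ the normal $C_5$ together with a $C_4=\langle x\rangle$ produces $F_{20}=C_5\semidirect C_4\le G$ (otherwise $x^2$ centralizes $C_5$ and one obtains a $C_{10}$), but $F_{20}$ itself has five cyclic subgroups of order $4$. For $(4,4,8)$ Lemma~\ref{lemma:Z} applied to the normal $\langle x^2\rangle$ inside the unique $C_8$ and the second $C_4=\langle t\rangle$ yields $t^2=x^4$; a case analysis of $txt^{-1}=x^k$ with $k^2\equiv 1\pmod 8$ then shows $\langle x,t\rangle$ always has either an additional $C_8$ or more than two cyclic subgroups of order $4$.

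With the exclusion table complete, the final step is a proposition: if $\sigma(G)=(3,4,4,4,6)$, then $G\cong C_3\semidirect C_4$. Let $y$ generate the unique $C_3$ and $s$ any one of the $C_4$'s; normality of $\langle y\rangle$ forces $sys^{-1}\in\{y,y^{-1}\}$, and the trivial action gives $\langle s,y\rangle=C_{12}$, forbidden. So $sys^{-1}=y^{-1}$ and $\langle y,s\rangle\cong C_3\semidirect C_4$. Since this group already contains three cyclic subgroups of order $4$ and one of order $6$, $\langle y,s\rangle$ accounts for every prescribed nontrivial cyclic subgroup of $G$, so the subgroup $H$ of Proposition~\ref{from_H} equals $C_3\semidirect C_4$; since this group is non-abelian, Proposition~\ref{from_H} rules out the $H\semidirect C_2$ extension and forces $G=H$. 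I expect the $(4,4,8)$ exclusion to be the main technical obstacle.
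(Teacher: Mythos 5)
Your overall architecture is exactly the paper's: prune Table~\ref{table:5} down to the two survivors $\sigma(G)=(7)$ and $\sigma(G)=(3,4,4,4,6)$, then apply Proposition~\ref{a_2a}(i) and Proposition~\ref{from_H} respectively; your final identification of $(3,4,4,4,6)$ with $C_3\rtimes C_4$ is in substance the paper's Lemma~\ref{lemma:34466}(b). Most exclusions coincide with the paper's table, and where they differ your alternatives are valid and sometimes cleaner. You dispatch $(3,4,4,6,6)$ directly with Lemma~\ref{lemma:les2}, whereas the paper writes a separate argument (Lemma~\ref{lemma:34466}(a)). For $(3,6,6,6,6)$ your count of involutions in $C_G(w)$, where $w$ generates the unique $C_3$, is a genuinely different and shorter route than the paper's Lemma~\ref{lemma:36666}: since every element of $G$ has order $1,2,3$ or $6$, one checks $z_iz_j$ cannot have order $3$ (else $z_i=w^{\pm1}z_j$ has order $6$) or order $6$ (else $z_i$ inverts $(z_iz_j)^2=w^{\pm1}$, which it centralizes), so the four involutions together with $1$ form a group of order $5$, absurd. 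For $(4,4,5)$ you count the five complements in $F_{20}$ where the paper produces a $C_{20}$; for $(4,4,8)$ your Lemma~\ref{lemma:Z} plus $k^2\equiv 1\pmod 8$ case analysis works, though the paper's Lemma~\ref{lemma:448} reaches the contradiction faster by showing $xy$ has order $8$ whether $xyx^{-1}=y$ or $y^{-1}$.

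The one genuine gap is in your exclusion of $(3,3,3,3,6)$. After correctly forcing the Sylow $3$-subgroup to be a normal $P\cong C_3\times C_3$, you assert $(C_3\times C_3)\times\langle x^3\rangle\le G$. But $x^3$ is only known to centralize $\langle x^2\rangle\le P$; a priori it could act on $P$ as the involution fixing $\langle x^2\rangle$ pointwise and inverting a complement $\langle w\rangle$, in which case $\langle P,x^3\rangle$ is not a direct product. The conclusion still survives (that nonabelian group contains the three distinct cyclic subgroups $\langle x^2w^0\cdot x^3\rangle$-type groups $\langle x^2\rangle\times\langle x^3w^b\rangle$ of order $6$ for $b=0,1,2$, already contradicting $n_6=1$), but as written the step is unjustified. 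The paper's Lemma~\ref{lemma:33336} sidesteps the issue entirely: choose $z$ of order $3$ with $\langle z\rangle\cap\langle x\rangle=1$; since $\langle x\rangle$ is the unique $C_6$ it is normal, the action $C_3\to\Aut(C_6)\cong C_2$ must be trivial, so $C_6\times C_3\le G$, which already contains four cyclic subgroups of order $6$. You should either adopt that argument or add the half-paragraph handling the nontrivial action of $x^3$ on $P$.
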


\begin{lemma} \label{lemma:r}
If a group $G$ has a unique cyclic subgroup of order $4$ and a 
subgroup of order $3$, then $G$ has an element of order $12$.
\end{lemma}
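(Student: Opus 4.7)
The plan is to exploit the uniqueness of the order-$4$ subgroup to force the order-$3$ subgroup to centralize it, and then take a product. Let $H = \langle x \rangle$ be the unique cyclic subgroup of order $4$, and let $y$ be an element of order $3$ (existing inside the given subgroup of order $3$).

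First I would observe that $H$ is normal in $G$: for any $g \in G$, the conjugate $gHg^{-1}$ is again a cyclic subgroup of order $4$, hence equals $H$ by uniqueness. In particular $yxy^{-1} \in H$, so conjugation by $y$ induces an automorphism of $H \cong C_4$. Since $\mathrm{Aut}(C_4) \cong C_2$ while the order of this induced automorphism also divides $|y| = 3$, the induced automorphism must be trivial, i.e.\ $yxy^{-1} = x$, so $x$ and $y$ commute.

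Finally, since $\gcd(|x|, |y|) = \gcd(4,3) = 1$, the subgroups $\langle x\rangle$ and $\langle y\rangle$ intersect trivially, and the commuting element $xy$ has order $|x|\cdot|y| = 12$, giving the desired element.

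There is essentially no obstacle here beyond the normality argument, which is immediate from uniqueness; the proof parallels Lemma \ref{lemma:les1} but uses the fact that $\mathrm{Aut}(C_4)$ has order $2$ (coprime to $3$) to upgrade the conclusion from order $6$ to order $12$.
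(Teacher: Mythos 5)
Your proof is correct and is essentially identical to the paper's argument: both show that conjugation by the order-$3$ element induces the trivial automorphism of the unique (hence normal) $C_4$, because $\Aut(C_4)\cong C_2$ has order coprime to $3$, and then conclude that the product of the commuting generators has order $12$. The paper phrases this via a semidirect product $\langle x\rangle\semidirect\langle y\rangle$ with trivial twisting homomorphism, but the content is the same.
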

\begin{proof} Suppose 
	$\langle x\rangle$ is the unique cyclic subgroup of order $4$ and 
	$\langle y\rangle$ is a cyclic subgroup of order $3$.
	Then $\c{x}$ is normal in $\c{x, y}$, and 
	$\langle x\rangle \cap \langle y\rangle = \{1\}$, 
	so  we have a semidirect product
	$\langle x\rangle \semidirect\langle y\rangle$ for
	some homomorphism $\langle y\rangle \to \Aut(C_4)$.
	But $\Aut(C_4)\cong\Z_4^\times\cong C_2$ and $y$ has order $3$,
	so the homomorphism must be trivial.
	Hence 
	$\langle x\rangle \semidirect \langle y\rangle
	\cong C_4\times C_3  \cong  C_{12}$ and $G$ has an 
	element of order $12$.
\end{proof}

\begin{lemma}\label{lemma:36666}
There is no group $G$ such that $\sigma(G)=(3,6,6,6,6)$.
\end{lemma}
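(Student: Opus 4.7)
The plan is to assume $\sigma(G) = (3,6,6,6,6)$ and derive a contradiction by producing too many cyclic subgroups of order~$6$ inside the subgroup generated by the elements of order~$6$. Let $\c{w}$ be the unique cyclic subgroup of order~$3$; since it is the only such subgroup, each of the four cyclic subgroups of order~$6$ contains it. Write $H_i = \c{x_i}$ for $i = 1,2,3,4$, choosing the generator so that $x_i^2 = w$, and set $t_i = x_i^3$, the unique involution of $H_i$. The $t_i$ are pairwise distinct, since $H_i = \c{w, t_i}$ is determined by $t_i$. Let $H = \c{x_1, x_2, x_3, x_4}$; then $H_i \le H$ for each $i$, so $H$ contains every cyclic subgroup of order~$6$ of $G$ and has at most four of them.

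The heart of the argument is to show that $H \cong \c{w} \times P$ for some elementary abelian $2$-group $P$. Because each $x_i$ commutes with $x_i^2 = w$, we have $w \in Z(H)$. By hypothesis every non-identity element of $H$ has order $2$, $3$, or $6$, so every $2$-element of $H$ is an involution and any Sylow $2$-subgroup $P$ is elementary abelian; similarly the Sylow $3$-subgroup must be $\c{w}$, so $|H| = 3 \cdot |P|$. By Sylow's theorem, the number of Sylow $2$-subgroups is an odd divisor of $3$, hence $1$ or $3$. If it were $3$, then $N_H(P)$ would have index~$3$ and therefore equal $P$; but $w \in Z(H) \subseteq N_H(P)$ would force $w \in P$, contradicting $|w| = 3$. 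So $P \unlhd H$, and $P \cap \c{w} = 1$ gives $H = P \times \c{w}$.

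Finally, in $\c{w} \times P$ with $P$ elementary abelian, an element has order~$6$ iff both of its components are non-trivial, and each such cyclic subgroup of order~$6$ contains a unique involution of $P$; hence $H$ has exactly $|P| - 1$ cyclic subgroups of order~$6$. The distinct involutions $t_1, \ldots, t_4 \in P$ force $|P| \ge 5$, hence $|P| \ge 8$, yielding at least~$7$ cyclic subgroups of order~$6$ in $H \le G$ and contradicting $\sigma(G) = (3,6,6,6,6)$.

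The main obstacle is pinning down the direct-product decomposition $H = \c{w} \times P$; this is where the centrality of $w$ is essential, as it is precisely what rules out three Sylow $2$-subgroups. Once the structure of $H$ is fixed, the final count is automatic, and the excess of cyclic subgroups of order~$6$ is forced simply by the presence of four distinct involutions $t_1,\ldots,t_4$.
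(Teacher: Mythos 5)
Your proof is correct, and it reaches the contradiction by a genuinely different route than the paper. The paper's argument runs the conjugation action of $H=\langle x_1,x_2,x_3,x_4\rangle$ on the four subgroups $\{H_1,\dots,H_4\}$, uses the relation $x_i^2=x_j^2$ to rule out $x_ix_jx_i^{-1}=x_j^{-1}$ by a direct order computation, concludes step by step that $H$ is abelian, and then exhibits $\langle x_1^{-1}x_2x_4\rangle$ as an explicit fifth cyclic subgroup of order~$6$. You instead observe that $w$ is central in $H$, note that the hypothesis on $\sigma(G)$ forces the Sylow $2$-subgroup $P$ to be elementary abelian and the Sylow $3$-subgroup to be $\langle w\rangle$, and use the centrality of $w$ to kill the possibility of three Sylow $2$-subgroups, yielding the clean decomposition $H=\langle w\rangle\times P$; the count $|P|-1\ge 7$ of cyclic subgroups of order~$6$ then finishes the job. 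All the steps check out: the $t_i=x_i^3$ are indeed distinct involutions of $P$ (since $H_i=\langle w,t_i\rangle$), the exponent-$2$ Sylow subgroup is automatically abelian, and the $n_2\mid 3$ argument with $w\in Z(H)\le N_H(P)=P$ is airtight. What your approach buys is a sharper structural conclusion --- the number of order-$6$ subgroups in such a configuration must be $2^k-1$, so four is impossible for purely arithmetic reasons --- whereas the paper's element-by-element commutator analysis stays within the single conjugation-action technique it reuses throughout (e.g., in Propositions \ref{prop:3333} and \ref{prop:4444}) and produces a concrete witness for the fifth subgroup. Either proof is acceptable.
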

\begin{proof}
Let $H_1,H_2,H_3,H_4$ be the four cyclic subgroups of order 6 
with generators $x_1, x_2, x_3, x_4$ respectively. Since there is a unique 
subgroup of order 3, we must have $x_1^2=x_2^2=x_3^2=x_4^2$. 
Let $H=\langle x_1,x_2,x_3, x_4\rangle$. As usual, there is a homomorphism 
$\phi:G\rightarrow S_4$ given by $G$ acting on $\{H_1,H_2,H_3,H_4\}$ 
by conjugation (with $H_i$ identified with $i$).  
Since 
$x_i^2H_jx_i^{-2}=x_j^2H_jx_j^{-2}=H_j$, $\phi(x_i^2)=1$, 
so $\phi(x_i)$ must have order 1 or 2. 
Since $x_i H_i x_i^{-1}=H_i$, $\phi(x_i)$ must fix $i$. These two conditions 
force $\phi(x_i)$ to be the identity or a 2-cycle not containing $i$. 
Without loss of generality, we may assume that 
$\phi(x_1)=(1)$ or $\phi(x_1)=(34)$. In either case, 
$x_1 x_2 x_1^{-1}=x_2^{\pm 1}$. 
However, if 
$x_1 x_2 x_1^{-1}=x_2^{-1}$, 
then $1=x_1^6=x_2^2 x_1 x_2^2 x_1 = 
x_2^2 x_2^{-2}x_1x_1 = x_1^2$
contradicting the fact that $x_1$ has order 6. Therefore $x_1$ and $x_2$ 
commute and $\langle x_1,x_2\rangle\cong C_6\times C_2$, 
which has 3 cyclic subgroups 
of order 6.  Without loss of generality, we may assume that 
$x_3\in\langle x_1,x_2\rangle$. Since $x_1,x_2$, and $x_3$ all commute, 
$\phi(x_i)$ fixes 1,2, and 3 for $i=1,2,3$ and hence they must fix 4 as well. 
Consequently, $x_i x_4 x_i^{-1}=x_4^{\pm 1}$ for $i=1,2,3$. 
By an argument analogous to that above,  $x_i x_4 x_i^{-1}=x_4^{-1}$ 
leads to a contradiction, so $H$ must be an abelian group. 
But in that case, $\langle x_1^{-1} x_2 x_4\rangle$ is a fifth cyclic subgroup 
of order 6, yielding a contradiction and the result follows.
\end{proof}  

\begin{lemma}\label{lemma:445}
	There is no group $G$ such that $\sigma(G)=(4,4,5)$.
\end{lemma}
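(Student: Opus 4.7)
The plan is to exploit the uniqueness of the order-$5$ subgroup together with the structure of $\Aut(C_5)\cong C_4$, and use case analysis on the conjugation action of an order-$4$ subgroup on it.

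First I would set up notation. Let $K=\langle y\rangle$ be the (unique, hence normal) cyclic subgroup of order $5$ and let $\langle x\rangle$ be either of the two cyclic subgroups of order~$4$. Since $K\unlhd G$, conjugation by $x$ gives a homomorphism $\rho:\langle x\rangle\to\Aut(K)\cong C_4$, whose image has order $1$, $2$, or $4$. I would then dispatch these three cases in turn.

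If $\rho$ is trivial, then $x$ and $y$ commute, so $xy$ has order $20$, forcing cyclic subgroups of orders $10$ and $20$ in $G$—neither appears in $\sigma(G)$, contradiction. If $\im\rho$ has order~$2$, then $\rho(x^2)=1$, so $x^2$ commutes with $y$ and $x^2y$ has order $10$, again contradicting $\sigma(G)=(4,4,5)$. The remaining possibility is that $\rho$ is faithful, i.e.\ $\langle x,y\rangle\cong C_5\semidirect C_4$ with $C_4$ acting faithfully on $C_5$.

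The main step (and the only one with actual content) is handling the faithful case: I would directly count elements of order~$4$ in $F_{20}=\langle a,b\mid a^5=b^4=1,\ bab^{-1}=a^2\rangle$. A quick computation shows $(a^i b)^2 = a^{3i}b^2\neq 1$ and $(a^i b)^4=1$, so every element of the form $a^ib$ has order $4$, and similarly for $a^i b^3$. That gives $10$ elements of order $4$, hence $10/\phi(4)=5$ cyclic subgroups of order $4$ inside $\langle x,y\rangle\subseteq G$. This contradicts the assumption that $G$ has only two cyclic subgroups of order~$4$, completing the proof.

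Since every case leads to a contradiction, no such group exists. The only potential obstacle is the subgroup count in $F_{20}$, but this is essentially a one-line verification once the normal form for elements is fixed; the real structural content is the observation that uniqueness of $K$ forces $\langle x,y\rangle$ to be one of the three listed groups of order dividing $20$, and the first two are killed immediately by producing forbidden element orders.
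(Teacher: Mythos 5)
Your proof is correct, but it runs the semidirect product in the opposite direction from the paper and is worth comparing. The paper lets the order-$5$ element act on the two-element set of order-$4$ subgroups: since $5$ is odd, the induced map to $S_2$ kills $x$, so $\langle y\rangle$ is normalized by $x$ and $\langle x,y\rangle\cong \langle y\rangle\rtimes\langle x\rangle$ for some homomorphism $C_5\to\Aut(C_4)\cong C_2$, which is necessarily trivial; this yields an element of order $20$ in one step, with no case analysis. You instead normalize the unique $C_5$ and analyze the image of $\langle x\rangle\to\Aut(C_5)\cong C_4$, which costs you three cases: the trivial and order-$2$ images produce forbidden elements of orders $20$ and $10$ (both fine), and the faithful case requires the element count in $F_{20}$ showing it has five cyclic subgroups of order $4$. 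Your computation there checks out: $(a^ib)^2=a^{3i}b^2\neq 1$ and $(a^ib)^4=1$, so the ten elements $a^ib$, $a^ib^3$ all have order $4$ and give $10/\phi(4)=5$ subgroups, exceeding the allowed two. The paper's route is shorter because it exploits the arithmetic accident $5\nmid|\Aut(C_4)|$; your route is the one that generalizes if one ever needs to rule out signatures where the order-$4$ subgroups cannot be shown a priori to be normalized, and it isolates exactly where the hypothesis ``only two cyclic subgroups of order $4$'' is truly needed (only in the faithful case). Both are complete proofs.
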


\begin{proof}
	There are elements $x, y, z\in G$ such that $\c{x}\cong C_5$, and 
	$\c{y}, \c{z}$ are the distinct cyclic subgroups of order $4$.
	When $G$ acts on $\{\c{y}, \c{z}\}$ by conjugation we have 
	a homomorphism
	$\phi:G\to S_2$, and since $x$ has order $5$ we must have 
	$\phi(x)=(1)$ in $S_2$.
	Now we have $\c{y}\unlhd\c{x, y}$, and $\c{x}\cap\c{y}=\{1\}$, 
	and hence 
	$\c{x, y} \cong C_4\semidirect C_5 \cong \c{y}\semidirect\c{x}$ for 
	some homomorphism $\c{x}\to\Aut(\c{y})$. 
	But the only homomorphism from $C_5$ to $\Aut(C_4)$ is 
	the trivial one.  Therefore the semidirect product
	is the direct product, and $G$ has a subgroup isomorphic to 
	$C_4\times C_5\cong C_{20}$ which is impossible since $G$ has no element
	of order $20$.
\end{proof}

\begin{lemma}\label{lemma:448}
There is no group $G$ such that $\sigma(G)=(4,4,8)$.
\end{lemma}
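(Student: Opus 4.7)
The plan is to obtain a contradiction by producing a second cyclic subgroup of order $8$. Since the hypotheses rule out non-trivial cyclic subgroups of any order other than $2$, $4$, or $8$, every non-identity element of $G$ has $2$-power order, so $G$ is a $2$-group. Let $\langle x\rangle$ be the unique (hence normal) cyclic subgroup of order~$8$; then $\langle x^2\rangle$ is one of the two cyclic subgroups of order~$4$, and I let $\langle y\rangle$ denote the other.

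Two structural constraints on $x$ and $y$ fall out immediately. First, normality of $\langle x\rangle$ gives $yxy^{-1}=x^k$ for some $k\in\{1,3,5,7\}$. Second, conjugation by $x$ permutes the two cyclic subgroups of order~$4$; since $\langle x^2\rangle$ is fixed and $\langle y\rangle\ne\langle x^2\rangle$, the subgroup $\langle y\rangle$ must also be fixed, giving $xyx^{-1}\in\{y,y^{-1}\}$. The analysis now splits into two cases. In the commuting case $xyx^{-1}=y$ we get $k=1$. In the inverting case $xyx^{-1}=y^{-1}$, combining the relations $yx=xy^{-1}$ and $yx=x^ky$ yields $y^2=x^{1-k}$; since $y^2$ is the unique element of order~$2$ in $\langle y\rangle$ and the unique element of order~$2$ in $\langle x\rangle$ is $x^4$, this forces $k=5$ and $y^2=x^4$.

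To finish, I will compute $(xy)^2$ in each case and verify $|xy|=8$. In the commuting case with $y^2=x^4$, $(xy)^2=x^2y^2=x^6$, which has order~$4$. In the commuting case with $y^2\ne x^4$, we have $\langle x\rangle\cap\langle y\rangle=\{1\}$, so $(xy)^n=x^ny^n=1$ forces $8\mid n$. In the inverting case, $(xy)^2=x(yx)y=x\cdot x^5 y\cdot y=x^6y^2=x^{10}=x^2$, again of order~$4$. In every case $y\notin\langle x\rangle$ implies $xy\notin\langle x\rangle$, so $\langle xy\rangle$ is a cyclic subgroup of order~$8$ distinct from $\langle x\rangle$, contradicting uniqueness. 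The main obstacle is keeping the two conjugation relations straight in the inverting case; once $k=5$ and $y^2=x^4$ are pinned down, the verification of $|xy|=8$ is a one-line calculation and the rest is routine.
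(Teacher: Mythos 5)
Your proof is correct and follows essentially the same route as the paper's: both identify the unique $C_8=\langle x\rangle$ and the second $C_4=\langle y\rangle$, use the conjugation action on the two order-$4$ subgroups to force $xyx^{-1}=y^{\pm 1}$, and then exhibit $\langle xy\rangle$ as a second cyclic subgroup of order $8$. Your extra bookkeeping (pinning down $k=5$ and $y^2=x^4$ in the inverting case, and splitting the commuting case into subcases) is sound but not needed, since $(xy)^4=x^4y^4=x^4\neq 1$ in the commuting case and $(xy)^2=x(yx)y=x(xy^{-1})y=x^2$ in the inverting case already give $|xy|=8$ directly.
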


\begin{proof} Suppose there were such a group and let $K=\langle x\rangle$
	be the unique cyclic subgroup of order $8$.  
	Let $H_1=\langle y\rangle$ and $H_2=\langle x^2\rangle$ be the two 
	cyclic subgroups of order $4$.  Group $G$ acts on $\{H_1, H_2\}$
	by conjugation inducing a homomorphism $\phi:G\to S_2$.
	Because $xH_2x^{-1} = H_2$, we have $\phi(x)=(1)$. Therefore
	$xyx^{-1}=y$ or $xyx^{-1}=y^{-1}$.  In either case it will follow 
       that the order of $xy$ is $8$, a contradiction.

       \smallskip Case 1: $xy=yx$.  
       Since $(xy)^8=1$, the order of $xy$ divides $8$.  
       Since $x$ has order $8$ we have 
       $(xy)^4 = x^4y^4=x^4$ is not the identity. 
       Therefore, $xy$ has order $8$ in this case. 
       
       \smallskip Case 2: $xy=y^{-1}x$. 
       A computation shows $(xy)^4=x^4$ is not the identity, and $(xy)^8=1$.
       Therefore, $xy$ has order $8$ in this case as well.
       \end{proof}

\begin{prop}\label{prop:44444}
There is no group $G$ with $\sigma(G)=(4,4,4,4,4)$.
\end{prop}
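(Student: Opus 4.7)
The plan is to derive this statement as an immediate corollary of Proposition \ref{odd_4s}. That proposition classifies every finite group $G$ for which $\sigma(G)=(4,4,\ldots,4)$ with an \emph{odd} number $m$ of entries: the only possibilities are $m=1$, with $G\cong C_4$ or $D_8$, and $m=3$, with $G\cong Q_8$. Since $5$ is odd but does not lie in $\{1,3\}$, no group can have $\sigma(G)=(4,4,4,4,4)$, which is exactly the claim.

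For the reader's convenience I would briefly restate why the hypotheses of Proposition \ref{odd_4s} are met. The condition $\sigma(G)=(4,4,4,4,4)$ forbids any non-trivial cyclic subgroup of $G$ other than cyclic subgroups of order $2$ or $4$, so $G$ contains no element of odd order greater than $1$, hence $G$ is a $2$-group. The number of cyclic subgroups of order $4$ is $5$, which is odd, so Theorem \ref{herzog} forces $G$ to be cyclic, dihedral, generalized quaternion, or quasidihedral. Any such group of order at least $16$ contains an element of order $8$, contradicting the shape of $\sigma(G)$, while among such groups of order at most $8$ only $C_4$, $D_8$, and $Q_8$ arise, contributing $1$, $1$, and $3$ cyclic subgroups of order $4$ respectively. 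In particular, $5$ is never achieved.

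Because the work has been absorbed into Proposition \ref{odd_4s} (and ultimately into Theorem \ref{herzog}), there is no real obstacle here; the proof is essentially a one-line citation. The only thing worth flagging is that the same argument rules out $\sigma(G)=(4,4,\ldots,4)$ for \emph{every} odd $m\geq 5$, not just $m=5$, which is a useful observation for handling larger values of $\Delta$ later.
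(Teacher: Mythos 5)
Your proof is correct and is exactly the paper's argument: the paper also derives this as an immediate consequence of Proposition~\ref{odd_4s}, since $5$ is odd but not in $\{1,3\}$. Your recap of why the hypotheses of that proposition apply matches the paper's own proof of it.
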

\begin{proof}
	This follows from Proposition~\ref{odd_4s}.
\end{proof}

\begin{lemma}\label{lemma:33336}
There is no group $G$ with $\sigma(G)=(3,3,3,3,6)$.
\end{lemma}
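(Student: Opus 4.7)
The plan is to exploit the normality forced by the uniqueness of the order-$6$ cyclic subgroup, and then obtain a contradiction by counting order-$6$ cyclic subgroups inside a specific subgroup of order $18$.

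First I would let $\c{w}$ be the unique cyclic subgroup of order~$6$ and set $u=w^2$, $t=w^3$. Since $\c{w}$ is unique, it is normal in $G$; hence $\c{u}$, being characteristic in $\c{w}$, is normal in $G$ as well. Next I would pin down the Sylow $3$-subgroup $P$ of $G$. Because $\c{u}$ is a normal $3$-subgroup, it lies in every Sylow; because $\sigma(G)$ contains no element of order~$9$, $P$ has exponent $3$ and is therefore elementary abelian, so $P\cong C_3^k$. The case $k\ge 3$ would force at least $13$ subgroups of order~$3$, too many. The case $k=1$ would give $\c{u}=P$, hence $n_3=1$, contradicting the fact that each of the four cyclic subgroups of order~$3$ would then be its own Sylow (so $n_3=4$). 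Thus $P\cong C_3\times C_3$, and since distinct Sylows would intersect exactly in $\c{u}$ and each contribute three new order-$3$ subgroups, we must have $n_3=1$ and $P\unlhd G$.

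The heart of the argument is to analyze $H=\c{P,t}$, a subgroup of $G$ of order $18$. Conjugation by $t$ induces an automorphism $\tau$ of $P$ fixing $u$ (since $t$ and $u$ commute inside the abelian $\c{w}$), with $\tau^2=1$. If $\tau$ is trivial, then $H\cong C_3\times C_3\times C_2$, and a direct count yields $8/\phi(6)=4$ cyclic subgroups of order~$6$. If $\tau$ is a non-trivial involution fixing $u$, then its $(-1)$-eigenspace $L$ in $P$ is a line complementary to $\c{u}$, so $H\cong \c{u}\times(L\semidirect\c{t})\cong C_3\times S_3$, which has $6/\phi(6)=3$ cyclic subgroups of order~$6$. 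Either way $G\supseteq H$ contains more than one cyclic subgroup of order~$6$, contradicting $\sigma(G)=(3,3,3,3,6)$.

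The main obstacle is the last step, where $\tau$ could a priori be any involution of $P\cong\mathbb{F}_3^2$ fixing $u$; but the elementary observation that such an involution is either trivial or splits $P$ into a fixed line and an inverted line reduces the analysis to the two cases above, and in both resulting groups of order~$18$ the count of order-$6$ cyclic subgroups strictly exceeds~$1$.
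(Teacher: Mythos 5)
Your argument is correct, but it takes a genuinely different and considerably longer route than the paper's. The paper's proof is three lines: since $\sigma(G)$ has four entries equal to $3$ and the unique $C_6$ contains only one subgroup of order $3$, one can pick $z$ of order $3$ with $\c{x}\cap\c{z}=1$ where $\c{x}\cong C_6$ is normal; then $\c{x}\c{z}\cong C_6\semidirect C_3$ must be $C_6\times C_3$ because $\Aut(C_6)\cong C_2$ admits no nontrivial image of $C_3$, and $C_6\times C_3$ already has four cyclic subgroups of order $6$. You instead determine the entire Sylow $3$-structure (forcing $P\cong C_3\times C_3$ normal) and then count order-$6$ subgroups in the order-$18$ subgroup $\c{P,t}$; this works, and the eigenspace analysis of the involution $\tau$ is a nice touch, but most of that machinery is not needed — the single "outside" order-$3$ subgroup already does the job. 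One small slip worth fixing: "exponent $3$, therefore elementary abelian" is false in general (the extraspecial group of order $27$ has exponent $3$); your argument survives because for $k\ge 3$ the count $(3^k-1)/2\ge 13$ of order-$3$ subgroups needs only exponent $3$, and for $k=2$ a group of order $p^2$ is automatically abelian, but the justification as written should be repaired.
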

\begin{proof}
Suppose $\c{x}\cong C_6$.
Choose an element $z\in G$ of order $3$ where $\c{x}\cap\c{z}=\{1\}$.
Since $\c{x}\unlhd G$, we have a semidirect product subgroup
$C_6\semidirect\c{z}$ for some homomorphism $C_3\to \Aut(C_6)\cong C_2$.
It follows that the homomorphism is trivial and 
$C_6\semidirect\c{z} \cong C_6\times C_3$.
Consequently $G$ contains more than one cyclic subgroup of order $6$,
a contradiction.
\end{proof}

\begin{lemma}\label{lemma:34466}
(a) There is no group $G$ with $\sigma(G)=(3,4,4,6,6)$.

(b) If $\sigma(G)=(3,4,4,4,6)$ then $G\cong C_3\semidirect C_4$.
\end{lemma}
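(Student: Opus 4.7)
The plan is to dispose of part (a) immediately via Lemma~\ref{lemma:les2}, and to prove part (b) by pinning down the action of the order-$4$ generators on the unique cyclic subgroup of order $3$, producing a copy of $C_3\semidirect C_4$ that already accounts for all the required cyclic subgroups.

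For (a), the hypothesis $\sigma(G)=(3,4,4,6,6)$ means $G$ has exactly two cyclic subgroups of order $4$ and a cyclic subgroup of order $3$. Lemma~\ref{lemma:les2} then produces an element of order $12$; since $12$ does not appear in $\sigma(G)$, this is a contradiction.

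For (b), I would write $\langle w\rangle$, $\langle x\rangle$, $\langle y_1\rangle$, $\langle y_2\rangle$, $\langle y_3\rangle$ for the cyclic subgroups of orders $3$, $6$, $4$, $4$, $4$, choosing $x$ so that $x^2=w$. The subgroup $\langle w\rangle$ is the unique Sylow $3$-subgroup, hence normal. The first key step is to show that $y_i w y_i^{-1}=w^{-1}$ for each $i$: otherwise $y_i$ centralizes $w$, and combined with $\langle y_i\rangle\cap\langle w\rangle=1$ this makes $y_i w$ an element of order $12$, contradicting $\sigma(G)$. Consequently $y_i^2$ centralizes $w$, so $\langle y_i^2,w\rangle\cong C_6$ must coincide with the unique order-$6$ subgroup $\langle x\rangle$, and comparing the unique involutions gives $y_i^2=x^3$ for each $i$.

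Next I would identify $H_0:=\langle y_1,w\rangle$ with $C_3\semidirect C_4$: the relations $y_1^4=w^3=1$ and $y_1 w y_1^{-1}=w^{-1}$, together with $\langle y_1\rangle\cap\langle w\rangle=1$, force $|H_0|=12$ and determine the isomorphism type. A direct count inside $C_3\semidirect C_4$ shows it already contains three distinct cyclic subgroups of order $4$ (generated by $y_1,wy_1,w^2y_1$) and a cyclic subgroup of order $6$ (generated by $w^2y_1^2$). Since $G$ has only three order-$4$ subgroups and only one order-$6$ subgroup in total, $y_2$, $y_3$ and $x$ all lie in $H_0$, so the subgroup $H=\langle w,x,y_1,y_2,y_3\rangle$ equals $H_0$. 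Proposition~\ref{from_H} then gives $G\cong H$ or $G\cong H\semidirect C_2$ with $H$ abelian; the second option is ruled out because $H\cong C_3\semidirect C_4$ is non-abelian, leaving $G\cong C_3\semidirect C_4$. The main technical hurdle is the first step (showing that no $y_i$ centralizes $w$); once that is established, recognizing $H_0$ and counting its cyclic subgroups closes the argument without further difficulty.
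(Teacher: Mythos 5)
Your proof is correct and follows essentially the same route as the paper: for (b) both arguments force the order-$4$ generators to invert the normal $C_3$ (using the absence of order-$12$ elements), recognize $\langle w,y_1\rangle\cong C_3\rtimes C_4$, check that this subgroup already accounts for all three cyclic subgroups of order $4$ and the one of order $6$, and finish with Proposition~\ref{from_H}. The only (harmless) difference is in (a), where you quote Lemma~\ref{lemma:les2} to produce an order-$12$ element, while the paper instead exhibits a third cyclic subgroup of order $4$ inside $C_3\rtimes C_4$; both yield immediate contradictions.
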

\begin{proof}
Suppose $\c{x}\cong C_3$ and $\c{y}\cong C_4$.  These two groups intersect 
trivially, and the first is normal in $G$ so 
is normal in $\c{x,y}$, hence $\c{x,y}\cong C_3\semidirect C_4$.
If the action of $y$ on $\c{x}$ is trivial 
then $\c{x,y}\cong C_{12}$,
which is impossible if $\sigma(G)=(3,4,4,6,6)$ or
if $\sigma(G)=(3,4,4,4,6)$.
If the action is not trivial then
$$H=\c{x, y\ |\ x^3=y^4=1, \ yxy^{-1}=x^{-1} },$$
and in this case one can check that 
$\c{y}$, $\c{xy}$, and $\c{x^2y}$
are three distinct cyclic subgroups of order 4.
This is not
possible if $\sigma(G)=(3,4,4,6,6)$, so (a) is proved.

To prove (b), one readily checks that 
$\sigma(H)=(3,4,4,4,6)$, so by Proposition~\ref{from_H}, 
$G\cong C_3\semidirect C_4$. 
\end{proof}

We now prove Theorem~\ref{thm:diff5}.

\begin{proof}[Proof of Theorem \ref{thm:diff5}]
If $\Delta(G)=5$ then because of the previous Lemmas and Propositions
summarized in Revised Table \ref{table5.2}, either $\sigma(G)=(7)$ or $\sigma(G)=(3,4,4,4,6)$.
If $\sigma(G)=(7)$ then $G\cong C_7$ or $G\cong C_{14}$ by 
part (i) of Proposition~\ref{a_2a}.
If $\sigma(G)=(3,4,4,4,6)$ then $G\cong C_3\semidirect C_4$ by part (b) of
Lemma~\ref{lemma:34466}.
\end{proof}

%----------------------------------------
%\setcounter{table}{4}
%\renewcommand{\thetable}{\arabic{table}.2}
%\begin{table}[ht]
\begin{table}[H]
	\caption{\label{table5.2} Revised table for $\Delta(G)=5$}
	\centering
\begin{tabular}{|c|c|c|}
\hline
Partition & $\sigma(G)$ & Groups \\
\hline
5 & (7) & $C_7$, $D_{14}$ \\
\hline
3+1+1 & (3,4,4,4,6) & $C_3\semidirect C_4$ \\
\hline
\end{tabular}
\end{table}

%----------------------------------------
%\setcounter{table}{4}
%\renewcommand{\thetable}{\arabic{table}.1}
%\begin{table}[ht]
\begin{table}[H]
	\caption{\label{table5.1} Exclusion table for $\Delta(G)=5$}
	\centering
\begin{tabular}{|c|l|} 
	\hline
	$\sigma(G)$ & Excluded by \\
	\hline
	(9) & No $C_3$ \\
	(14) & No $C_7$ \\
	(18) & No $C_9$ \\
	(3,3,3,3,3) & Sylow \\
	(4,4,4,4,4) & Proposition \ref{prop:44444}\\
	(6,6,6,6,6) & No $C_3$ \\
	(3,3,3,3,4) & Lemma \ref{lemma:r} \\
	(3,3,3,3,6) & Lemma \ref{lemma:33336} \\
	(3,4,4,4,4) & Lemma \ref{lemma:les1} \\
	(4,4,4,4,6) & No $C_3$ \\
	(3,6,6,6,6) & Lemma \ref{lemma:36666}\\
	(4,6,6,6,6) & No $C_3$ \\
	(3,3,3,4,4), (3,3,3,6,6),(3,3,4,4,4)  & Sylow \\
	(4,4,4,6,6) & No $C_3$ \\
	(3,3,6,6,6) & Sylow \\
	(4,4,6,6,6) & No $C_3$ \\
	(3,3,5) & Sylow \\
	(4,4,5) & Lemma \ref{lemma:445} \\
	(5,6,6) & No $C_3$ \\
	(3,3,8) & No $C_4$ \\
	(4,4,8) & Lemma \ref{lemma:448} \\
	(6,6,8) & No $C_3$ \\
	(3,3,10), (4,4,10), (6,6,10)  & No $C_5$ \\
	(3,3,12) & No $C_4$ \\
	(4,4,12), (6,6,12) & No $C_3$ \\
	(3,3,3,4,6) & Sylow \\
	(3,4,6,6,6) & Proposition \ref{lemma:ab} \\
	(3,4,5), (3,5,6) & Proposition \ref{lemma:ab} \\
	(4,5,6) & No $C_3$ \\
	(3,4,8), (3,6,8) & Proposition \ref{lemma:ab} \\
	(4,6,8) & No $C_3$ \\
	(3,4,10), (3,6,10), (4,6,10)  & No $C_5$ \\
	(3,4,12) & No $C_6$ \\
	(3,6,12) & No $C_4$ \\
	(4,6,12) & No $C_3$ \\
	(3,3,4,4,6), (3,3,4,6,6)  & Sylow \\
	(3,4,4,6,6) & Lemma \ref{lemma:34466} \\
	\hline
\end{tabular}
\end{table}

\section{Acknowledgements}
This paper is an expansion and revision of the Master's thesis of the 
second author, which was directed by the first and third authors.

We would like to thank the anonymous referee for a very careful reading 
of the paper, and for many helpful remarks.


\begin{thebibliography}{9}
\bibitem{andreev} R.~Andreev. A translation of ``Verallgemeinerung des Sylow'schen Satzes'' by
F.~G.~Frobenius. 
Preprint (2016).  \url{https://arxiv.org/abs/1608.08813}

\bibitem{brown} K.~Brown and J.~Th\'evenaz.
A Generaliztion of Sylow's Third Theorem.
\textit{J.~Algebra} \textbf{115} no.~2 (1988), 414--430.

\bibitem{frobenius} G.~Frobenius. Verallgemeinerung des Sylow'schen Satzes, 
\textit{Berliner Sitzungsberichte} (1895), 981--993.

\bibitem{herzog} M.~Herzog, 
Counting group elements of order $p$ modulo $p^2$.
\textit{Proc.~Amer.~Math.~Soc.} \textbf{66} no.~2 (1977), 247--250.

\bibitem{isaacs} I.M. Isaacs, Character Theory of Finite Groups, Academic Press, New York, 1976

\bibitem{IR} I.M.~Isaacs and G.R.~Robinson.
On a Theorem of Frobenius: Solutions of $x^n=1$ in Finite Groups.
\textit{Amer.~Math.~Monthly} \textbf{99} no.~4 (1992), 352--354.

\bibitem{tuarn} Marius T\u{a}rn\u{a}uceanu.
Finite groups with a certain number of cyclic subgroups.
\textit{Amer.~Math.~Monthly} \textbf{122} (2015), 275--276.

\bibitem{wild} M.~Wild.
The Groups of Order Sixteen Made Easy.
\textit{Amer.~Math.~Monthly} \textbf{112} no.~1 (2005), 20--31.

%\bibitem{mathoverflow} 
%Counting cyclic subgroups of order $p^2$: $p$ and odd prime vs.~$p=2$,
%\url{https://mathoverflow.net/q/221499}


\end{thebibliography}
\end{document}